%% file: final_draft.tex
\documentclass[12pt,reqno]{article}
\input{header.tex}

\title{Homological Nielsen Realization for\\the Manifolds $\#_n\cp^2$}
\date{\vspace{-5ex}}
\author{Ethan Pesikoff\thanks{Department of Mathematics, University of Chicago. epesikoff@uchicago.edu.   \href{https://math.uchicago.edu/~epesikoff/}{Personal webpage}}}

\bibliography{references} 

\begin{document}

\parskip0pt
\parindent10pt
\linespread{1.1}

\makeatletter
\renewcommand{\@fnsymbol}[1]{\@arabic{#1}}
\makeatother

\maketitle
\begin{abstract}
    \noindent Given a smooth, oriented, simply-connected $4$-manifold $M$, the homological Nielsen realization problem asks: when does a finite group of isometries $G\leq O(H_2(M;\Z))$ preserving the intersection form lift isomorphically to a finite group of orientation-preserving diffeomorphisms?  We study this question for the smooth, positive-definite 4-manifolds $M_n\coloneqq\#_n\cp^2$.  Even though every isometry of $H_2(M_n;\Z)$ is induced by some orientation-preserving diffeomorphism, not necessarily of finite order, we show that Nielsen realization is sparse: as $n\to\infty$, a random subgroup of $O(H_2(M_n;\mathbb Z))$ is asymptotically almost never realizable in $\diff^+(M_n)$; the same is true for random odd order elements of $O(H_2(M_n;\Z))$.  We present both positive realization results in certain cases and a range of obstructions to realization in other cases. The proofs combine equivariant connected-sum constructions, fixed-point theory for group actions on 4-manifolds, finite group actions on surfaces, analytic combinatorics, and previous work of Hambleton--Tanase \cite{ht}.
\end{abstract}
\section{Introduction}

Let $M$ be a closed, oriented smooth 4-manifold.  Its orientation-preserving diffeomorphism group $\diff^+(M)$ acts on $H_2(M;\Z)$ respecting the intersection form \[Q:H_2(M;\Z)\times H_2(M;\Z)\to\Z.\]  This action gives a natural representation \begin{align*}
    \pi: \diff^+(M)&\to O(H_{M})\coloneqq O(H_2(M,\Z),Q)\\
    f\ \ \ \ &\mapsto   [f_*:H_2(M;\Z)\to H_2(M;\Z)].
\end{align*}
One version of the \textit{Nielsen realization problem} asks: for which finite groups $G\leq O(H_M)$ does $\pi|_G$ admit a group-theoretic section?  If such a section exists we say $G$ is \textit{realizable} in $\diff^+(M)$. We say $\phi\in O(H_M)$ is realizable if $\ang{\phi}\leq O(H_M)$ is realizable.

In this paper we consider this homological Nielsen realization problem for the closed, simply-connected, oriented, positive-definite 4-manifolds $M_n\coloneqq \#_n\cp^2$.  Baraglia showed \cite{bar} that for these manifolds $M_n$, the map $\pi$ is surjective for all $n\geq 1$; in fact, he shows that the map $\pi_0(\diff^+(M_n))\to O(H_{M_n})$ splits for all $n\geq 1$.  (This is substantially weaker than Nielsen realization, since finite order isometries need not be induced by finite order diffeomorphisms.  But in particular Nielsen realizability is a conjugacy invariant in $O(H_{M_n})$.)  Note that there is a standard basis for $H_2(M_n;\Z)$ given by the hyperplane classes $\{e_1,\dots,e_n\}$ in each copy of $\cp^2$.  In this basis, the intersection form $Q$ is given by the identity matrix $Id_n$; in other words, $O(H_{M_n})$ is given by the group of integral isometries $O(n,\Z)$ of $\R^n$ with its usual inner product.  It is standard that $O(H_{M_n})$ can be thought of as the group of signed $n$-by-$n$ permutation matrices on the basis $\{e_1,\dots,e_n\}$.  In this representation, let $S_n< O(H_{M_n})$ denote the subgroup of matrices with totally positive sign, and let $G_n\leq O(H_{M_n})$ denote the diagonal subgroup. In particular, $G_n\cong (\Z/2\Z)^n$.  We then have \[O(H_{M_n})\cong O(n,\Z)\cong G_n\rtimes S_n.\]

Recall that Nielsen realization for surfaces always holds: any finite group of mapping classes lifts to an isomorphic finite group of orientation-preserving diffeomorphisms \cite{kerckhoff}.  (See \Cref{rmk:modtop} below for why this is an appropriate analogue of our homological question in the surface case.)  Nielsen realization is also often true on del Pezzo and K3 surfaces \cite{blk3,s1,s3}.  Similarly in the case of our manifolds $M_n$, many subgroups $H\leq O(H_{M_n})$ are realizable in $\diff^+(M_n)$.  When $n=2$ or $n=3$ all cyclic subgroups of $O(H_{M_n})$ are realizable in $\diff^+(M_n)$, and when $n\leq 8$, all cyclic subgroups of $S_n\leq O(H_{M_n})$ are realizable in $\diff^+(M_n)$.  (See \Cref{prop:m23} and \Cref{cor:Sn-summary}.)  In fact, we justify the following observation below by \Cref{thm:large_rank} and in \Cref{sec:cyclic} by previous work of Hambleton--Tanase \cite{ht}.

\begin{obs*}
    At least $2^n$ elements of $O(H_{M_n})$ are realizable in $\diff^+(M_n)$, and at least $2^{2n-1}$ subgroups of $O(H_{M_n})$ are realizable in $\diff^+(M_n)$.  Even up to conjugacy, there are at least $c_1n^2\log(n)^2$ realizable elements of $O(H_{M_n})$ and at least $c_2n^3$ realizable subgroups, for some constants $c_1,c_2>0$.
\end{obs*}
\noindent   In contrast to all the above, the main result of this paper (\Cref{thm:mega-thm} below) is that the generic element (resp. subgroup) of $O(H_{M_n})$ is \textit{almost never realizable} in $\diff^+(M_n)$.  To make this precise, we use the following notions of randomness.\\

\noindent \textbf{Convention on Randomness.}  A \textit{random element} of a finite group $H$, satisfying a condition $Q$, means a uniformly chosen element of $\{h\in H : h \text{ satisfies } Q\}$, and similarly for random subgroups. When we say \textit{up to conjugacy}, we use the uniform measure on conjugacy classes (of elements or subgroups) in $H$ satisfying the given condition. For a sequence of such random choices depending on $n$, we say that an event occurs \textit{asymptotically almost never} if its probability tends to $0$ as $n\to\infty$. We say it occurs \textit{asymptotically with probability at most }$P$ if the corresponding limsup is at most $P$.\\

\noindent For a subgroup $H\leq G_n\leq O(H_{M_n})$, let the \textit{rank} of $H$, or $\rk(H)$, denote its $\F_2$-rank (equivalently, $\log_2(\abs{H})$).  In particular, $G_n\leq O(H_{M_n})$ has subgroups of rank $r$ for all $0\leq r\leq n$.  We now state our main theorem.\vspace{2cm}
\begin{theorem}\label{thm:mega-thm}{\bf(Main Theorem)} Let $n\geq 1$.
    \begin{enumerate}
        \item Random elements of odd order in $O(H_{M_n})$ are asymptotically almost never realizable in $\diff^+(M_n)$ as $n\to\infty$.  The same holds for random elements of odd order in $S_n\leq O(H_{M_n})$.
        \item Random subgroups of $O(H_{M_n})$ are asymptotically almost never realizable in $\diff^+(M_n)$.  The same is true up to conjugacy.
        \item  Let $H\leq G_n\leq O(H_{M_n})$.  If $\rk(H)\leq 2$ then $H$ is realizable in $\diff^+(M_n)$.  In contrast, $H$ is not realizable in $\diff^+(M_n)$ if either:\begin{enumerate}
            \item $n$ is odd and $\rk(H)\geq 4$, or
            \item $\rk(H)>8+\log_2 n$.
        \end{enumerate}
        \item Fix some $k\geq 4$.  Then a  random rank-$k$ subgroup of $G_n\leq O(H_{M_n})$ is realizable in $\diff^+(M_n)$ with asymptotic probability at most $2^{-k}$.
    \end{enumerate}
\end{theorem}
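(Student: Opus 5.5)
The theorem collects several separate results, so I would prove it part by part. Each part combines an obstruction coming from the fixed-point theory of finite group actions on simply-connected $4$-manifolds with a count of how often the obstruction applies.

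\textbf{Obstructions.} The basic tools are the Lefschetz fixed point theorem and Smith theory. If $g\in\diff^+(M_n)$ has prime order $p$, then $\chi(\mathrm{Fix}(g))=L(g)=2+\mathrm{tr}(g_*)$. Smith theory gives $\sum_i b_i(\mathrm{Fix}(g);\F_p)\le \sum_i b_i(M_n;\F_p)=n+2$. For $p$ odd it also gives $\mathrm{Fix}(g)\ne\emptyset$, since $\chi(M_n)\not\equiv0\pmod p$ is not needed there: $\chi(\mathrm{Fix})\equiv\chi(M_n)=n+2 \pmod p$. The fixed set is then a disjoint union of points and orientable surfaces. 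The $G$-signature theorem relates the trace of $g_*$ on the positive-definite form to the fixed-point data (rotation numbers and self-intersections of the fixed surfaces).

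For odd-order elements I would use the results of Hambleton--Tanase \cite{ht} on cyclic actions on positive-definite $4$-manifolds. Roughly, they say that an odd-order $g$ realizable on $M_n$ must be conjugate in $O(n,\Z)$ to an isometry of a very restricted shape, namely a sum of ``standard'' blocks coming from linear actions on $\cp^2$ and equivariant connected sums. In particular, the permutation part of a realizable odd-order element has cycle type of a constrained form, for instance few long cycles and a bound on the number of cycle lengths.

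\textbf{Part (1).} Here I would show that a random odd-order element of $S_n$ (respectively of $O(H_{M_n})$) almost never satisfies these constraints. The main tool is analytic combinatorics on cycle types of odd-order permutations, using the exponential generating function $\exp\bigl(\sum_{k \text{ odd}} x^k/k\bigr)=\sqrt{(1+x)/(1-x)}$. From this one sees that the number of cycles grows like $\tfrac12\log n$ and that distinct long cycle lengths proliferate, which violates the Hambleton--Tanase constraints with probability tending to $1$. In $O(H_{M_n})$, an odd-order element is conjugate into $S_n$, so its distribution reduces to the $S_n$ case after weighting by the sign choices. This weighting is uniform in a way I would need to check.

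\textbf{Part (3).} For the positive statement with $\rk(H)\le2$, I would write $H$ diagonally and build equivariant connected sums of $\cp^2$ with the involutions $[x:y:z]\mapsto[\pm x:\pm y:z]$, which act by $\pm1$ on $e_i$. I would glue at common fixed points whose tangent representations match up to orientation reversal, giving any sign pattern of a rank-$\le2$ group.

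For obstruction (a), I would use that $(\Z/2)^k$ acting on $M_n$ with $k\ge4$ gives an action on the fixed surfaces of the involutions. Combining the Lefschetz count $\chi(\mathrm{Fix}(h))=2+\mathrm{tr}(h)$, whose parity is governed by $n$, with Smith theory for elementary abelian $2$-groups should show that some rank-$\ge 2$ subgroup must act freely on a sphere or point set in a way that is impossible. For $n$ odd the parity forces isolated fixed points whose tangent representations cannot accommodate rank $4$, since $(\Z/2)^4$ does not embed in $SO(4)$ as a diagonalizable group with the needed characters.

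For obstruction (b), I would use the bound $\sum b_i(\mathrm{Fix}(H');\F_2)\le n+2$ repeatedly, together with the classification of finite group actions on surfaces: an elementary abelian group acting on the fixed surface of an involution has rank bounded in terms of genus, giving roughly $\log_2$ of the total Betti number plus a constant. This yields $\rk(H)\le 8+\log_2 n$.

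\textbf{Parts (2) and (4).} Part (2) follows by counting: almost all subgroups of $O(n,\Z)$ are large elementary abelian $2$-groups, or contain such groups, of rank far exceeding $8+\log_2 n$. The same holds up to conjugacy, since the number of subgroups is dominated by $2$-groups of rank $\sim n^2/4$ in a Sylow subgroup. Part (4) uses the criterion behind (3a) refined for even $n$: a realizable rank-$k$ subgroup must satisfy about $k$ independent parity conditions on the sign vectors, one for each generator. A random rank-$k$ subgroup of $(\Z/2)^n$ satisfies these asymptotically with probability $2^{-k}$.

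I expect the main obstacles to be the precise realizability criterion underlying parts (3a) and (4), and the asymptotic analysis in part (1).
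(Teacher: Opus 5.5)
\textbf{Parts 3 and 4.} Your gluing principle is the right one: match tangent representations at common fixed points up to orientation reversal. But the involutions $[x:y:z]\mapsto[\pm x:\pm y:z]$ lie in the connected group $PGL_3(\C)$, so they act by $+1$ on $H_2(\cp^2;\Z)$. Connected sums of them therefore never produce a $-1$ on any $e_i$.

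The sign has to come from complex conjugation $J$, which is orientation-preserving on $\cp^2$ and acts by $-1$ on $H_2$. The key device is the ``hinge'' sum. At a suitable coordinate point there is an orientation-reversing isometry of tangent spaces intertwining $(f_X,J)$ on one copy with $(J,f_X)$ on the next. So a holomorphic reflection on one copy extends as conjugation on its neighbour, and trees of hinge and standard sums then give every rank-$2$ sign pattern.

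For the obstructions, the Lefschetz count $\chi(\mathrm{Fix}(h))=n+2-2\ell(h)$ does not force isolated fixed points. Fixed surfaces of involutions can be non-orientable with odd Euler characteristic; for example, conjugation on $\cp^2$ fixes $\rp^2$. Moreover the parity of this count does not depend on $h$, so it cannot supply the per-generator parity conditions you invoke in part 4.

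The missing input is Edmonds' formula $\beta_0+\beta_2=t+2=n-\ell(\phi)+2$ for the mod-$2$ Betti numbers of $\mathrm{Fix}(f)$. The fixed set is nonempty because a signature count shows no smooth involution of $M_n$ acts freely. Every fixed surface contributes $2$ to $\beta_0+\beta_2$, so if $n-\ell(\phi)$ is odd then $f$ has an odd number of isolated fixed points. The abelian $2$-group $\tilde H$ permutes these points and so fixes one of them. Hence $H\into SO(4)$ and $\rk(H)\le 3$.
\begin{itemize}
\item For $n$ odd, every subgroup of rank $\ge 2$ contains such a $\phi$, since the product of two odd-length elements has even length. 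This gives (3a).
\item For $n$ even, the same argument forces realizable subgroups of rank $k\ge 4$ into the even-weight hyperplane. Such subspaces make up a fraction $(2^{n-k}-1)/(2^n-1)\to 2^{-k}$ of all $k$-subspaces, which gives part 4.
\end{itemize}
For (3a) alone, the Smith congruence $\chi(M_n^{\tilde H})\equiv\chi(M_n)\pmod 2$ also yields a global fixed point, but it is vacuous for even $n$. Your outline for (3b) can be made to work by balancing the number $k$ of fixed components of a given type against their first Betti number $b$, using $kb\le n$. This is a variant of the paper's choice of $\phi$ with $\ell(\phi)\le 2n/3$, which produces a component with $\beta_1\le 4$.

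\textbf{Parts 1 and 2.} The constraint you attribute to Hambleton--Tanase is not right. Realizable odd-order permutations can have arbitrarily many long cycles, by attaching peripheral copies along many free orbits. With many fixed points they can also have many distinct cycle lengths, since each central $\cp^2$ brings up to three new isotropy strata. So a growing number of cycles contradicts nothing.

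What one can extract is a bound on the cycle lengths $>1$ that are minimal under divisibility, i.e.\ the maximal proper isotropy groups. In particular, the number of distinct prime cycle lengths is at most $\max\{2,3C_1(\sigma)\}$, where $C_1$ is the number of fixed points. The probabilistic input you need is then twofold. First, $C_1$ must be bounded in probability. Second, the number of odd primes $p\le(\log n)^2$ occurring as cycle lengths must tend to infinity in probability; its mean is about $\theta\log\log\log n$. For $O(H_{M_n})$ the pushforward to $S_n$ is not uniform: $\sigma$ has exactly $2^{n-c(\sigma)}$ odd-order signed lifts. So the estimates must be redone for the measure proportional to $2^{-c(\sigma)}$.

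In part 2, (3b) only covers subgroups of $G_n$, and a random subgroup of $O(n,\Z)$ need not contain a large one. For $n=2m$, take graphs of homomorphisms from a matching group $\ang{(1\,2),\dots,(2m{-}1\;2m)}\cong(\Z/2)^m$ to the diagonal matrices constant on each pair. These give $(2m-1)!!\cdot 2^{m^2}$ subgroups meeting $G_n$ trivially, already more than the $O(2^{m^2})$ subgroups of $G_n$.

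So you need two things:
\begin{itemize}
\item[(i)] a bound $d(P)=O(\log n)$ for every realizable $2$-subgroup $P\le O(H_{M_n})$, which your fixed-surface argument can supply if run for arbitrary central involutions;
\item[(ii)] an upper bound on the number of subgroups all of whose $2$-subgroups are $O(\log n)$-generated.
\end{itemize}
Your assertion that almost all subgroups contain large elementary abelian $2$-subgroups is exactly this unproved count.

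The paper writes $H$ as an extension of $Q=\rho(H)\le S_n$ by $K=H\cap G_n$. It uses the bound $|\mathrm{Sub}(S_n)|=2^{n^2/16+o(n^2)}$ for $Q$, and bounds $Z^1(Q,G_n/K)$ by restricting to a Sylow $2$-subgroup of $Q$; restriction is injective on $H^1$ because the index is odd. This gives $2^{n^2/16+o(n^2)}$ realizable subgroups, against $2^{n^2/4+O(n)}$ subgroups of $G_n$ alone. Dividing the latter by $|O(n,\Z)|=2^{O(n\log n)}$ gives the conjugacy version. Note also that ranks here are at most $n$; it is the logarithm of the number of subgroups that is of order $n^2/4$.
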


\noindent \Cref{thm:mega-thm} summarizes our main results but omits several refinements that we prove throughout the paper.  For example, we prove that random 2-subgroups of $S_n\leq O(H_{M_n})$ and random abelian subgroups of $O(H_{M_{2k+1}})$ are both asymptotically almost never realizable (\Cref{thm:2groups} and \Cref{thm:odd-ab}(3), respectively).

Our analysis splits naturally into three settings: first $G_n\leq O(H_{M_n})$, then $S_n\leq O(H_{M_n})$, and finally the whole group $O(H_{M_n})$.  In each setting, we first present constructions for realizing certain elements, then obstructions to realizing other groups satisfying some hypotheses, and finally in the case of asymptotic non-realizability statements, we show that these hypotheses are suitably generic.

\begin{rmk}
    By part 3 of \Cref{thm:mega-thm}, the diagonal subgroup $G_n\leq O(H_{M_n})$ exhibits a sharp transition in small rank: all subgroups of rank at most 2 are realizable, while for odd $n$, no subgroup of rank at least 4 is realizable. Thus rank 3 is the only unresolved boundary case in this regime. In \Cref{sec:gn_constr}, we explicitly construct an infinite family of realizable rank-3 subgroups.
\end{rmk}

\begin{rmk}\label{rmk:modtop}
    For $M$ any closed, oriented, simply-connected smooth 4-manifold, the map $\pi:\diff^+(M)\to O(H_M)$ factors through the topological mapping class group $\mpg(M)\coloneqq \pi_0(\homeo^+(M))$.  Work of Freedman \cite{freed}, Quinn, Gabai-Gay-Hartman-Krushkal-Powell, Kreck, and Perron (see \cite{gghkp} and the references therein) shows that the action of $\mpg(M)$ on $H_2(M;\Z)$ gives an isomorphism \[\phi:\mpg(M)\xrightarrow{\cong} O(H_{M}).\]  Consequently, the homological Nielsen realization of this paper is equivalent to the problem of lifting from $\mpg(M)$ to $\diff^+(M)$.
\end{rmk}

\begin{rmk}
    There are several other variations on the Nielsen realization problem.  For example, one might also wonder about lifting from $\mpg(M)$ to $\homeo^+(M)$, or from the smooth mapping class group $\pi_0(\diff^+(M))$ to $\diff^+(M)$.  For the manifolds $M_n$, working either with the smooth mapping class group or with $\homeo^+(M_n)$ leads into a deep world of exotic phenomena, which is not our primary focus here.  Both $\pi_0(\diff^+(M_n))$ and the \textit{Torelli group} \[T(M_n)\coloneqq \ker[\pi_0(\diff^+(M_n))\to O(H_{M_n})]\] remain poorly understood; examples of exotic phenomena on $M_n$ have been found \cite{ed2} but are far from being comprehensively classified.
\end{rmk}

\vspace{0.5cm}

\noindent{\bf Outline of the Paper.} In \Cref{sec:gn} we examine the lifting inside the reflection subgroup $G_n<O(H_{M_n})$.  We open with a series of increasingly general geometric constructions for realization and then present classical tools for obstructing realization.  We then prove refinements of parts 3 and 4 of \Cref{thm:mega-thm}.  In \Cref{sec:sn} we examine the lifting inside the permutation subgroup $S_n<O(H_{M_n})$.  We review previous work of Hambleton--Tanase \cite{ht}, which provides constraints and constructions for cyclic realization. We then prove the sparsity of realizable odd order elements of $S_n\leq O(H_{M_n})$ by combinatorial and analytic methods.  After proving an analogous sparsity result about 2-subgroups of $S_n\leq O(H_{M_n})$, we give realization constructions based on regular polytopes.  The section concludes with an obstruction to realizing large subgroups $S_k\leq S_n<O(H_{M_n})$. In \Cref{sec:other} we provide a geometric construction to realize certain cyclic subgroups which do not lie in either $G_n$ or $S_n<O(H_{M_n})$. In particular, cyclic Nielsen realization holds generally on $M_2$ and $M_3$.  Finally, we prove parts 1 and 2 of \Cref{thm:mega-thm}.

Obstructions to realization draw on several different sources: fixed-point and signature constraints in dimension four, including the Hirzebruch $G$-signature theorem and results of Edmonds \cite{ed} from Smith theory and representation theory; finite group actions on surfaces, including the $84(g-1)$ theorem; combinatorial and analytic estimates for partitions and permutations; group cohomology; and work of Hambleton--Tanase \cite{ht} proven using equivariant Yang-Mills moduli spaces.
\vspace{0.5cm}

\noindent{\bf Related work.} In dimension 4, recent work of S. Lee and Lee-Lewis-Raman studied Nielsen realization problems for Del Pezzo surfaces \cite{s1,s2,s3}.  In particular, our constructions below were first inspired by those in \cite{s1}.  Farb-Looijenga \cite{blk3} studied Nielsen realization problems for K3 surfaces.  Using techniques such as Seiberg-Witten invariants, Baraglia-Konno \cite{bk}, Konno \cite{konno}, Arabadji-Baykur \cite{ab}, and Konno-Miyazawa--Taniguchi \cite{kmt} have also achieved non-realization results in dimension 4.

Baraglia \cite{bar} studied the manifolds $n\cp^2\#_m\overline{\cp^2}$ and their mapping class groups; in particular, he gave an obstruction to Nielsen realizing a certain class of subgroups which inspired part of our proof of \Cref{thm:large_rank}.  Hambleton-Lee \cite{hl} and Hambleton--Tanase \cite{ht} studied certain rigidity phenomena on the manifolds $n\cp^2$, and their results provide essential input into the asymptotic non-realizability statements for $S_n<O(H_{M_n})$, \Cref{thm:asym,thm:2groups}.  In a slightly different direction, Edmonds \cite{ed2} studied locally linear, cyclic, pseudofree actions on $n\cp^2$, obstructing some and finding a non-smoothable example.

\vspace{0.5cm}

\noindent{\bf Acknowledgments.} I thank my advisor Benson Farb for his continuing support and encouragement throughout this project, for enlightening mathematical conversations, and for his many constructive comments on many previous drafts.  I also thank Faye Jackson and many other graduate students around me for helpful mathematical conversations and for answering my questions.  Thank you to Dan Margalit for comments on an earlier draft.  The author was partially supported by an NSF Graduate Research Fellowship under NSF grant no. 2140001.

\section{Realization in $G_n\leq O(H_{M_n})$}\label{sec:gn}

\noindent The main goal of this section is to prove the following theorem.  For an element $\phi\in G_n$, let $\ell(\phi)$ denote the number of $(-1)$s in its diagonal permutation matrix.  Recall that for $H\leq G_n\cong(\Z/2\Z)^n$, its \textit{rank} denotes its dimension as an $\F_2$-vector space
\begin{theorem}\label{thm:large_rank}
    Let $n\geq 1$, and let $H\leq G_n\leq O(H_{M_n})$. \begin{enumerate}
        \item If $\rk(H)\leq 2$, then $H$ is realizable in $\diff^+(M_n)$.  Equivalently, any subgroup of $G_n\leq O(H_{M_n})$ generated by at most two elements is realizable in $\diff^+(M_n)$.
        \item In contrast, $H$ is not realizable in $\diff^+(M_n)$ if any of the following hold.\begin{enumerate}
            \item $Rk(H) \geq 4$ and $n$ is odd.
            \item $Rk(H) \geq 4$ and there is some $\phi\in H$ with $(n-\ell(\phi))\equiv 1\mod 2$, where $\ell(\phi)$ denotes the number of $(-1)$s in its matrix representation.
            \item $Rk(H) > 8+\log_2(n)$.
        \end{enumerate} 
    \end{enumerate}
\end{theorem}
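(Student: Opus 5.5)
The plan is to prove part 1 by an explicit equivariant connected-sum construction, and part 2 by combining Smith theory with the local linear structure of an elementary abelian $2$-group near a fixed point.

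\textbf{Part 1: the construction.} The basic building block is a single $\cp^2$. Linear involutions of $\cp^2$ act trivially on $H_2$. Complex conjugation $c$ is orientation-preserving, since it reverses an even number ($2$) of complex coordinates, and it sends the hyperplane class $e$ to $-e$. Let $H=\langle a,b\rangle$ have rank $\le 2$. Each coordinate $i$ determines a character $\chi_i\colon H\to\{\pm1\}$, namely the sign of the $i$-th diagonal entry. I will realize $\chi_i$ by an action of $H$ on the $i$-th copy of $\cp^2$: elements of $\ker\chi_i$ act by commuting diagonal linear involutions such as $[x:y:z]\mapsto[\pm x:\pm y:z]$, and elements outside $\ker\chi_i$ act by such an involution composed with $c$. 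All these maps commute, so each copy carries a genuine $H$-action.

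The main point is to choose the linear parts so that every copy has an $H$-fixed point with a prescribed tangent representation. Since $H$ has rank $\le 2$, there is enough freedom to choose the tangent representations so that copies can be glued pairwise. I will then form $M_n$ as a linear chain of equivariant connected sums along $H$-fixed points. At each gluing point the two tangent representations must agree via an orientation-reversing $H$-isomorphism. I will enumerate the possible real $4$-dimensional representations of $(\Z/2)^2$ occurring at fixed points of these model actions and check that this matching can always be achieved. The result is a group of diffeomorphisms isomorphic to $H$ that induces $H$ on homology.

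\textbf{Part 2: the obstructions.} The key local fact is the following. If an elementary abelian $2$-group $K$ acts smoothly, effectively and orientation-preservingly with a fixed point $p$, then $K\hookrightarrow SO(T_pM)$ is diagonalizable over $\R$. Hence $K$ lies in the diagonal $\pm1$ subgroup of $SO(4)$, which has rank $3$, and so $\rk K\le 3$.
\begin{itemize}
\item For (a), suppose $H$ were realized. Smith theory gives $\chi(M_n^H)\equiv\chi(M_n)=n+2\pmod 2$. When $n$ is odd this is odd, so $M_n^H\neq\emptyset$, and the local fact contradicts $\rk H\ge 4$.
\item For (b), I will replace $M_n$ by the fixed set $F=\mathrm{Fix}(\phi)$ of the given involution. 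Smith theory and Edmonds give $b_*(F;\F_2)=n+2$ and $\chi(F)=2+n-2\ell(\phi)$. The quotient $H/\langle\phi\rangle$ acts on $F$, a disjoint union of points and surfaces. I will use the parity hypothesis on $n-\ell(\phi)$ to find a component of $F$ that is fixed by all of $H$ and has odd Euler characteristic, or an isolated fixed point. Such a component forces a global fixed point, and we conclude as in (a). I expect this parity bookkeeping on the components of $F$ to be the main obstacle. I would first try applying Smith theory for $H/\langle\phi\rangle$ acting on $F$, together with Edmonds' description of the components in terms of $\ell(\phi)$.
\item For (c), the argument is a counting one. The isolated points and components of fixed sets of elements of $H$ are few: at most $n+2$, by the Betti bound. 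Consequently, a subgroup $K\le H$ of index at most about $n+2$, so of rank at least $\rk H-\log_2(n+2)$, stabilizes some component. If that component is a point, the local fact gives $\rk K\le 3$. If it is a surface $\Sigma$, then $K$ acts on the normal bundle by a rank-$\le 1$ subgroup and on $\Sigma$ by an orientation-compatible $2$-group action. Known bounds on elementary abelian $2$-groups acting on surfaces, refined via their own fixed points, then bound the remaining rank by a constant. Adding up these constants should give the threshold $8+\log_2 n$.
\end{itemize}
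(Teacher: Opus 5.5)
Part 1 follows the paper's approach. The paper glues along a tree, but your chain also works once the deferred matching is carried out. Every copy with $\chi_i\neq 1$ has a whole circle of $H$-fixed points carrying the unique faithful $H$-representation in $SO(4)$ avoiding $-I$, so these copies chain through it. The copies with $\chi_i=1$ have three $H$-fixed points carrying the three distinct representations that contain $-I$; they can be chained alternately and attached at the isolated $H$-fixed point of an end copy. Your argument for (a), via $\chi(M_n^{\tilde H})\equiv\chi(M_n)\pmod{2}$ for the $2$-group $\tilde H$, is correct. It is more direct than the paper's, which deduces (a) from (b) by noting that for odd $n$ a product of two elements of odd length has even length. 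The genuine gaps are in (b) and (c).

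\textbf{Gap in (b).} The step you flag as the main obstacle is the entire content, and the tool you propose cannot supply it. We have $\chi(F)=n-2\ell(\phi)+2\equiv n\pmod{2}$ regardless of $\ell(\phi)$, as it must be, since $\chi(F)\equiv\chi(M_n)$ by Smith theory. So Smith theory for $\tilde H$ acting on $F$, or any parity count of components with odd Euler characteristic, only recovers case (a). It says nothing when $n$ is even and $\ell(\phi)$ is odd. What detects $n-\ell(\phi)$ is Edmonds' formula $\beta_0(F)+\beta_2(F)=n-\ell(\phi)+2$. This is valid once $F\neq\emptyset$, which the paper guarantees separately by showing that no realizing involution acts freely. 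With $\F_2$ coefficients, every closed surface, orientable or not, contributes exactly $2$ to this sum, and each isolated point contributes $1$. Hence $f$ has an odd number of isolated fixed points. Since $H$ is abelian, $\tilde H$ commutes with $f$ and permutes this odd set; being a $2$-group it fixes one of its points, and your $SO(4)$ bound finishes. This is exactly the paper's argument.

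\textbf{Gap in (c).} Your claim that the rank of the part of $K$ acting faithfully on the stabilized surface $\Sigma$ is bounded by a constant is false in general. There are faithful actions of elementary abelian $2$-groups of rank roughly $\log_2 g$ on genus-$g$ surfaces. For an arbitrary $\phi$, a fixed component can have $\beta_1$ as large as $\ell(\phi)$, which may be of order $n$. Paying $\log_2(n+2)$ for the index of $K$ and, independently, about $\log_2 n$ on $\Sigma$ only yields $\rk(H)\le 2\log_2 n+O(1)$. ``Refining via fixed points'' inside $\Sigma$ loses further logarithms, since an involution of $\Sigma$ can have on the order of $g$ fixed components.

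The missing idea is the paper's choice of $\phi$. Every subgroup of rank at least $2$ contains some $\phi$ with $\ell(\phi)\le 2n/3$: if $\ell(\phi),\ell(\psi)>2n/3$, then $\ell(\phi\psi)<2n/3$. Set $t=n-\ell(\phi)$. If $f$ has no isolated fixed points, then $F$ consists of $(t+2)/2$ surfaces sharing total $\beta_1=\ell(\phi)\le 2t$, so one of them has $\beta_1\le 4$. Orbit--stabilizer is then applied only to the $\tilde H$-invariant set of components of that diffeomorphism type. On that type the rank bound genuinely is a constant: uniformization handles $S^2$, $\rp^2$, $T^2$ and the Klein bottle, and the $84(g-1)$ theorem handles the rest. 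This gives the threshold $8+\log_2 n$.

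Alternatively, you may keep an arbitrary orbit $O$ of components. In that case you must bound $|K/K_0|$ linearly in $\beta_1(\Sigma)$, where $K_0$ is the kernel of $K$ acting on $\Sigma$, and combine this with $|O|\,\beta_1(\Sigma)\le\ell(\phi)$. What fails in your sketch is treating the two losses as independent.
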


\noindent In particular, \Cref{thm:large_rank} implies part 3 of \Cref{thm:mega-thm}. We prove the first part of \Cref{thm:large_rank} by constructing explicit realizations with equivariant connect sums.  These methods generalize in \Cref{constr:3} below to construct certain realizable subgroups of $G_n\leq O(H_{M_n})$ of rank 3.  Also notice that by counting subspaces of $F_2^n$ of ranks 1 and 2, there are at least $2^n$ elements of $O(H_{M_n})$ and $2^{2n-1}$ subgroups of $O(H_{M_n})$ which are realizable in $\diff^+(M_n)$.  Since conjugation by $O(H_{M_n})$ acts on a subgroup $H\leq G_n\leq O(H_{M_n})$ by permuting the standard coordinate basis of $\F_2^n\cong G_n$, counting subspaces up to this $S_n$-action shows that at least $c_2 n^3$ subgroups of $O(H_{M_n})$ are realizable in $\diff^+(M_n)$.\\

\noindent Since the group $G_n\leq O(H_{M_n})$ satisfies one of the hypotheses of part 2 of \Cref{thm:large_rank} for $n\geq 4$, we have the following corollary.

\begin{cor}\label{cor:whole-group}
    For $n\geq 4$ the group $O(H_{M_n})$ is not realizable in $\diff^+(M_n)$.
\end{cor}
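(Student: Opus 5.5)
The plan is to reduce to \Cref{thm:large_rank} by passing to the diagonal subgroup. First I will use the fact that realizability is inherited by subgroups: if $s:O(H_{M_n})\to\diff^+(M_n)$ is a group-theoretic section of $\pi$ over $O(H_{M_n})$, then its restriction $s|_{G_n}$ is a homomorphism with $\pi\circ s|_{G_n}=\mathrm{id}_{G_n}$. So $s|_{G_n}$ is a section of $\pi$ over $G_n$, and $G_n$ would be realizable in $\diff^+(M_n)$. It therefore suffices to show that $G_n\leq O(H_{M_n})$ is not realizable when $n\geq 4$.

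Next I will check that $G_n$ satisfies a hypothesis of part 2 of \Cref{thm:large_rank}. Since $G_n\cong(\Z/2\Z)^n$, we have $\rk(G_n)=n\geq 4$. If $n$ is odd, hypothesis (a) applies directly. If $n$ is even, I will use hypothesis (b) with the single reflection $\phi=\mathrm{diag}(-1,1,\dots,1)\in G_n$. This element has $\ell(\phi)=1$, so $n-\ell(\phi)=n-1\equiv 1\pmod 2$. (Alternatively, one could simply observe that (b) already covers both parities, since this $\phi$ lies in $G_n$ for every $n$; the case split is only for clarity.) In either case \Cref{thm:large_rank} shows that $G_n$ is not realizable, and hence neither is $O(H_{M_n})$.

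There is no genuine obstacle here, since all the content lies in part 2 of \Cref{thm:large_rank}. The only point to state carefully is the heredity of realizability under passing to subgroups, which holds because a section over a group restricts to a section over any subgroup.
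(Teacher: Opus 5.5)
Your proof is correct and is the same as the paper's, which simply notes that $G_n$ (of rank $n\geq 4$) meets a hypothesis of part 2 of \Cref{thm:large_rank} and that realizability passes to subgroups. One slip in your parenthetical: for odd $n$ the reflection $\mathrm{diag}(-1,1,\dots,1)$ has $n-\ell(\phi)=n-1$ even, so it does not witness (b); an element of even length such as $\mathrm{diag}(-1,-1,1,\dots,1)$ would, and this is exactly how the paper reduces (a) to (b), but your main case split via (a) already handles odd $n$ correctly.
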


To conclude the section, we show the following result, which in particular implies part 4 of \Cref{thm:mega-thm}.

\begin{prop}\label{cor:gn}{\bf (Probabilistic non-realizability for subgroups of $G_n\leq O(H_{M_n})$)}\hfill
    \begin{enumerate}
        \item Fix some $k\geq 4$.  Then a random rank-$k$ subgroup of $G_n\leq O(H_{M_n})$ is realizable in $\diff^+(M_n)$ with asymptotic probability at most $2^{-k}$.
        \item A random subgroup of $G_n\leq O(H_{M_n})$ is asymptotically almost never realizable in $\diff^+(M_n)$.
        \item A random subgroup of $G_n\leq O(H_{M_n})$ with rank at most $\log_2 n$ is asymptotically almost never realizable in $\diff^+(M_n)$.  In fact, the probability that such a random subgroup is realizable in $\diff^+(M_n)$ is $O(n\inv)$.
    \end{enumerate}  
\end{prop}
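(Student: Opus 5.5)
The plan is to derive all three parts from the obstructions in \Cref{thm:large_rank}(2), mainly (b), together with counting subspaces of $\F_2^n\cong G_n$. Identify $\phi\in G_n$ with a vector in $\F_2^n$; then $\ell(\phi)$ is its Hamming weight. Obstruction (b) applies to $H$ with $\rk(H)\ge 4$ unless every $\phi\in H$ has $\ell(\phi)\equiv n \pmod 2$.

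Write $w:\F_2^n\to\F_2$ for the weight-parity functional $x\mapsto\sum x_i$. Since $w(0)=0$, if $n$ is odd then $\ell(0)=0\not\equiv n$, so $H$ is obstructed; this recovers (a). If $n$ is even, the surviving condition is $w(\phi)=0$ for all $\phi\in H$, that is, $H\subseteq \ker w$, a fixed hyperplane $V\cong\F_2^{n-1}$. So in all cases a realizable $H$ of rank $\ge4$ must lie in $V=\ker w$.

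For part 1, fix $k\ge4$. The probability that a uniformly random $k$-dimensional subspace of $\F_2^n$ lies in a fixed hyperplane is the ratio of Gaussian binomials
\[
\binom{n-1}{k}_2\Big/\binom{n}{k}_2=\frac{2^{n-k}-1}{2^n-1}\le 2^{-k}.
\]
For odd $n$ the probability is $0$. Hence the limsup is at most $2^{-k}$.

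For part 2, a uniformly random subgroup of $G_n$ has rank concentrated near $n/2$. Indeed, $\binom{n}{k}_2\approx c\,2^{k(n-k)}$, so the subgroups of rank $<4$ form a vanishing fraction. The total number of subgroups contained in $V$ is $\sum_k\binom{n-1}{k}_2$. Comparing the Galois numbers $\sum_k\binom{m}{k}_2$ for $m=n-1$ and $m=n$, whose growth is like $2^{m^2/4}$, the ratio is roughly $2^{-n/2}\to0$. Alternatively, one can apply the bound $2^{-k}$ uniformly for $k\ge4$: summing the part-1 estimate weighted by the rank distribution gives probability at most $\mathbb E[2^{-\rk}]+P(\rk<4)$, and both terms tend to $0$ by concentration. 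Either way a random subgroup is almost never realizable. Condition (c) can handle the huge ranks, but it is not needed.

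For part 3, restrict to ranks $k\le\log_2 n$. The count $N_k=\binom{n}{k}_2$ grows by a factor of about $2^{n-2k}$ per step in this range, so the distribution is dominated by the top rank $k_0=\lfloor\log_2 n\rfloor$. Subgroups of rank $\le3$ then contribute a fraction $O(2^{-n})$, which is much smaller than $O(1/n)$. For $k\ge4$, the conditional probability of being realizable is at most $2^{-k}$. The weighted average is therefore at most
\[
\frac{\sum_{k\le k_0}N_k2^{-k}}{\sum_{k\le k_0}N_k}\approx 2^{-k_0}=O(n^{-1}).
\]
The main care point is making this dominance estimate rigorous, via $N_{k-1}/N_k=(2^k-1)/(2^{n-k+1}-1)\le 2^{2k-n}$, which is tiny when $k\le\log_2 n$.
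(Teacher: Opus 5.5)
Your argument is correct. Part 1 matches the paper's argument. Part 3 is the paper's argument made rigorous. In particular, you note that subgroups of rank $\le 3$ are not constrained by the parity obstruction (those of rank $\le 2$ are all realizable), and you show they form only an $O(2^{-n})$ fraction. The paper's ``by the logic of part a'' passes over this point.

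Part 2 takes a different route from the paper. The paper uses obstruction (c) of \Cref{thm:large_rank}, $\rk(H)>8+\log_2 n$, so it only needs that a random subspace of $\F_2^n$ almost never has rank $\le 8+\log_2 n$. This gives a failure probability of $2^{-n^2/4+O(n\log n)}$, but it relies on the surface-action input behind (c): uniformization and the $84(g-1)$ theorem applied to fixed surfaces.

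You instead reuse the parity obstruction (b), which forces every realizable subgroup of rank $\ge 4$ into the hyperplane $\ker w$. You then average the exact inequality $\frac{2^{n-k}-1}{2^n-1}\le 2^{-k}$ over the rank distribution. Using this non-asymptotic bound is the right choice, since you average over ranks that grow with $n$. Your route needs only the lighter input behind (b): an odd number of isolated fixed points, and the fact that $(\Z/2\Z)^4$ does not embed in $SO(4)$. It also handles all three parts by one mechanism. The price is a much weaker rate: your bound, essentially the ratio of Galois numbers $G_{n-1}/G_n$, decays only like $2^{-n/2}$.

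One small correction: you should not ``recover (a)'' from (b) by taking $\phi=0$. The proof of (b) applies \Cref{prop:ed} to an involution $f$ realizing $\phi$, which makes no sense for the identity, so (b) must be read with $\phi\neq 1$. For odd $n$, cite (a) directly; its proof produces a nontrivial even-length element as the product of two distinct odd-length ones. For even $n$ your conclusion $H\subseteq\ker w$ is unaffected, since the identity lies in $\ker w$ anyway.
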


In \Cref{sec:gn_constr}, we present three increasingly general constructions for realization and use them to prove the first part of \Cref{thm:large_rank}.  In \Cref{sec:tools}, we exposit a few useful tools for obstructing realization.  In \Cref{sec:pfthmlgrk}, we give proofs of \Cref{thm:large_rank}, \Cref{cor:whole-group}, and \Cref{cor:gn}.

\subsection{Three constructions for realization}\label{sec:gn_constr}
We present three realization constructions of increasing complexity and generality.  We then use them to prove the first part of \Cref{thm:large_rank}.
\begin{constr}\label{constr:1}
    Consider a single copy of $\cp^2$ with some fixed projective coordinate system $[X:Y:Z]$, and define the following self-diffeomorphisms.\begin{align*}
        f_X:&[X:Y:Z]\mapsto[-X:Y:Z]\\
        J:&[X:Y:Z]\mapsto[\bar{X}:\bar{Y}:\bar{Z}].
    \end{align*}
    We construct a model of $M_2$ with the following property: the map $f_X$ on one $\cp^2$-component extends naturally as a smooth involution which on the other copy acts as $J$.  Conversely, $J$ on the first copy should extend as $f_X$ on the second.
    
    We can see this construction in two ways; one is more abstract and provides a cleaner proof of its validity, and the other is a more concrete picture using local coordinates.  Both are useful to consider going forward, so we present both viewpoints.\\

\noindent \textbf{Method 1:} We follow an argument of S. Lee \cite{s1}, which we exposit here for clarity.
    \begin{lem}[\cite{s1}, Lemma 3.2]\label{lem:ser}
        Let $M$ and $N$ be closed, simply-connected oriented smooth 4-manifolds. Let $G_M\cong (\Z/2\Z)^2\leq \diff^+(M)$ and $G_N\cong (\Z/2\Z)^2\leq \diff^+(N)$, and fix a group isomorphism $\phi:G_M\to G_N$.  Suppose there exist $p\in Fix(G_M)\subseteq M$ and $q\in Fix(G_N)\subseteq N$, and for any $h\in G_M$, let $F_h$ and $F_{\phi(h)}$ denote the connected components of $p,q$ in $Fix(h),Fix(\phi(h))$, respectively.  Assume further that $F_h$ and $F_{\phi(h)}$ are each 2-dimensional.  For some pair of generators $f,g$ of $G_M$, assume that $F_f\cap F_g$ and $F_{\phi(f)}\cap F_{\phi(g)}$ are 1-dimensional.
    
            Then there exist diffeomorphisms $h\#\phi(h)$ of $\diff^+(M\#\bar{N})$ for all $h\in G_M$ such that $\ang{h\#\phi(h):h\in G_M}\cong G_M$, and \[[h\#\phi(h)]=([h],[\phi(h)])\in \mpg(M)\times\mpg(\bar N)\leq \mpg(M\# N).\]
        \end{lem}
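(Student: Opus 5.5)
The plan is to perform an equivariant connected sum at the common fixed points $p$ and $q$. The key input is that the isotropy representations of $G_M$ at $p$ and of $G_N$ at $q$, transported along $\phi$, should agree once we reverse orientation on one side.

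\textbf{Step 1: the local representation at $p$.} Since $G_M\cong(\Z/2\Z)^2$ is finite, choose a $G_M$-invariant Riemannian metric and linearize at $p$. This gives a faithful orthogonal representation $\rho_p:G_M\to SO(T_pM)=SO(4)$. Each nontrivial $h$ is an orientation-preserving involution whose fixed component $F_h$ through $p$ is $2$-dimensional. Hence $\rho_p(h)$ has $+1$-eigenspace $T_pF_h$ of dimension $2$ and $-1$-eigenspace of dimension $2$.

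Diagonalize the commuting involutions $\rho_p(f),\rho_p(g)$ simultaneously. The hypothesis $\dim(F_f\cap F_g)=1$ forces the common $+1$-eigenspace to be a line. A short count of the characters of $(\Z/2\Z)^2$ then shows the representation is
\[T_pM\cong \mathbf 1\oplus\chi_f\oplus\chi_g\oplus\chi_{fg},\]
where $\chi_f$, $\chi_g$, $\chi_{fg}$ are the three nontrivial characters, each occurring exactly once. I should check that this count is compatible with each element fixing a $2$-plane: each nontrivial element acts by $+1$ on $\mathbf 1$ and on exactly one other summand, which gives fixed dimension $2$ as required.

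\textbf{Step 2: matching the two sides.} The same analysis at $q$ gives $T_qN\cong \mathbf 1\oplus\chi'\oplus\chi''\oplus\chi'''$. Under $\phi$ this becomes the same representation of $G_M$, so there is a $G_M$-equivariant linear isometry $L:T_pM\to T_qN$. Composing $L$ with $-1$ on the trivial summand gives another equivariant isometry of opposite orientation behaviour, since $-1$ commutes with the diagonal action. We can therefore choose $L$ to be orientation-reversing, which is exactly what the connected sum $M\#\bar N$ requires.

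\textbf{Step 3: the gluing.} Using the equivariant exponential map, take invariant balls $B_p$, $B_q$ on which $G_M$ (resp. $G_N$ via $\phi$) acts linearly. Remove them and glue the boundary $3$-spheres by the map $x\mapsto L(x)/|x|^2$, or equivalently glue the collars using inversion composed with $L$. Because $L$ is equivariant and inversion commutes with orthogonal maps, the actions on the two sides agree on the overlap. They therefore define a smooth action of $G_M$ on $M\#\bar N$ by elements $h\#\phi(h)$. This action is faithful, since it is already faithful on either side.

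\textbf{Step 4: the action on homology.} The induced map on $H_2(M\#\bar N)=H_2(M)\oplus H_2(\bar N)$ is $[h]\oplus[\phi(h)]$, because each summand is carried by the complement of the removed ball. The mapping class statement then follows by \Cref{rmk:modtop}.

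The main obstacle is Step 1: pinning down the isotropy representation exactly from the fixed-set dimension hypotheses, including the orientation sign needed in Step 2. The remaining steps are routine equivariant gluing.
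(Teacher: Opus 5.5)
Your proposal is correct and follows essentially the same route as the paper's sketch: you linearize at the common fixed points, identify the isotropy representation as $\mathbf 1\oplus\chi_1\oplus\chi_2\oplus\chi_3$, and glue by an equivariant isometry of the appropriate orientation type (the paper phrases the identification as the unique faithful $(\Z/2\Z)^2\to SO(4)$ avoiding $-I$, and your explicit character count makes that uniqueness transparent). One harmless sign slip: with your gluing $x\mapsto L(x)/|x|^2$, inversion already reverses orientation, so an orientation-reversing $L\colon T_pM\to T_qN$ produces $M\#N$, and for $M\#\bar N$ you want $L$ orientation-preserving — which you have anyway, since you showed that equivariant isometries of both orientation types exist.
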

    
        \begin{proof}[Proof sketch]
            Fixing $G_M$ (resp. $G_N$)-invariant metrics on $M$ (resp. $N$) gives tangential representations $\rho_M:G_M\into SO(T_pM)\cong SO(4)$ and $\rho_N:G_N\into SO(T_qN)\cong SO(4)$ by acting on the tangent space.  Note these representations are injective because an isometry of a compact manifold is determined by its action on a point and a frame.  By hypothesis, $\dim(T_pM^{G_M})=\dim(T_qN^{G_N})=1$, and so $-I$ is not in the image of either $\rho_M,\rho_N$.
    
            There is a unique faithful representation $(\Z/2\Z)^2\into SO(4)$ with image avoiding $-I$, up to conjugation in $SO(4)$.  Then $\rho_M,\rho_N$ are equivalent by an orientation-reversing isometry $T_pM\to T_qN$, or an orientation-preserving isometry $T_pM\to \overline{T_qN}$.  Hence this constructs the desired connect sum of maps with its sought properties.
        \end{proof}

        Now simply apply \Cref{lem:ser} to the manifolds $M=\cp^2$ and $N=\overline{\cp^2}$.  Let $G_M=\ang{f_X,J}$ and $G_N=\ang{J,f_X}$. Notice that although we defined the maps $f_X$ and $J$ on $\cp^2$, they are still perfectly well-defined and orientation preserving on the flipped manifold $\overline{\cp^2}$. Consider the isomorphism $G_M\to G_N$ induced by $f_X\mapsto J$, $J\mapsto f_X$.  Notice that
        \begin{align*}
            Fix(f_X)&=\{[0:Y:Z]\}\sqcup\{[1:0:0]\}\cong \cp^1\sqcup \{p\}\\
            Fix(J)&=\{[X:Y:Z]:X,Y,Z\in\R\}\cong\rp^2\\
            Fix(J)\cap Fix(f_X)&=\{[0:Y:Z]:Y,Z\in\R\}\cong S^1\\
        \end{align*}
        Letting $p$ and $q$ lie at $[0:1:0]$ in their respective coordinate systems then fulfills the requirements of \Cref{lem:ser} and produces the sought diffeomorphisms on $\cp^2\#\overline{\overline{\cp^2}}=\cp^2\#\cp^2=M_2$.

    \item \textbf{Method 2:} Construct a local model of a neighborhood of the connect sum, as follows. Around the point $[0:0:1]$, there is an affine chart in the $X_1,Y_1$ coordinates on one ball, and coordinates $X_2,Y_2$ locally in the other copy of $\cp^2$.  To form the sought connect sum, cut out this coordinate ball from each copy of $\cp^2$ and glue their boundary spheres by a particular orthogonal orientation-reversing map $\psi$. It is more convenient to construct $\psi$ by identifying the balls themselves, and then to restrict to the boundary.  On each ball, there is the real ordered basis $(\re X_i,\im X_i,\re Y_i,\im Y_i)$ in the respective coordinate systems.  Then $\psi$ is given by the matrix \[A=\begin{pmatrix}
        0&0&0&1\\
        0&1&0&0\\
        0&0&1&0\\
        1&0&0&0\\
    \end{pmatrix}.\]  That is, identify the two $X$-axes $\re X$ and $\im X$ on one side with the two imaginary axes $\im X$ and $\im Y$ on the other.  Since $\det(A)=-1$, it reverses orientation and gives the connect sum $M_2$.  From this local model, it is clear that $f_X$ on one side corresponds to complex conjugation on the other.

Note that along with $\psi$, there is another map $\tau$ given in the same local chart by the matrix \[T=\begin{pmatrix}
        -1&0&0&0\\
        0&1&0&0\\
        0&0&1&0\\
        0&0&0&1\\
    \end{pmatrix},\] identifies $f_X$ on one side again with $f_X$ on the other, and similarly for conjugation $J$.  Call the gluing by $A$ a nontrivial or `hinge' connect sum, the gluing by $T$ a `trivial' or `standard' connect sum. See \Cref{fig:hinge}.
\end{constr}

\begin{figure}[hbt!] 
    \centering
    \includegraphics[width=0.7\textwidth]{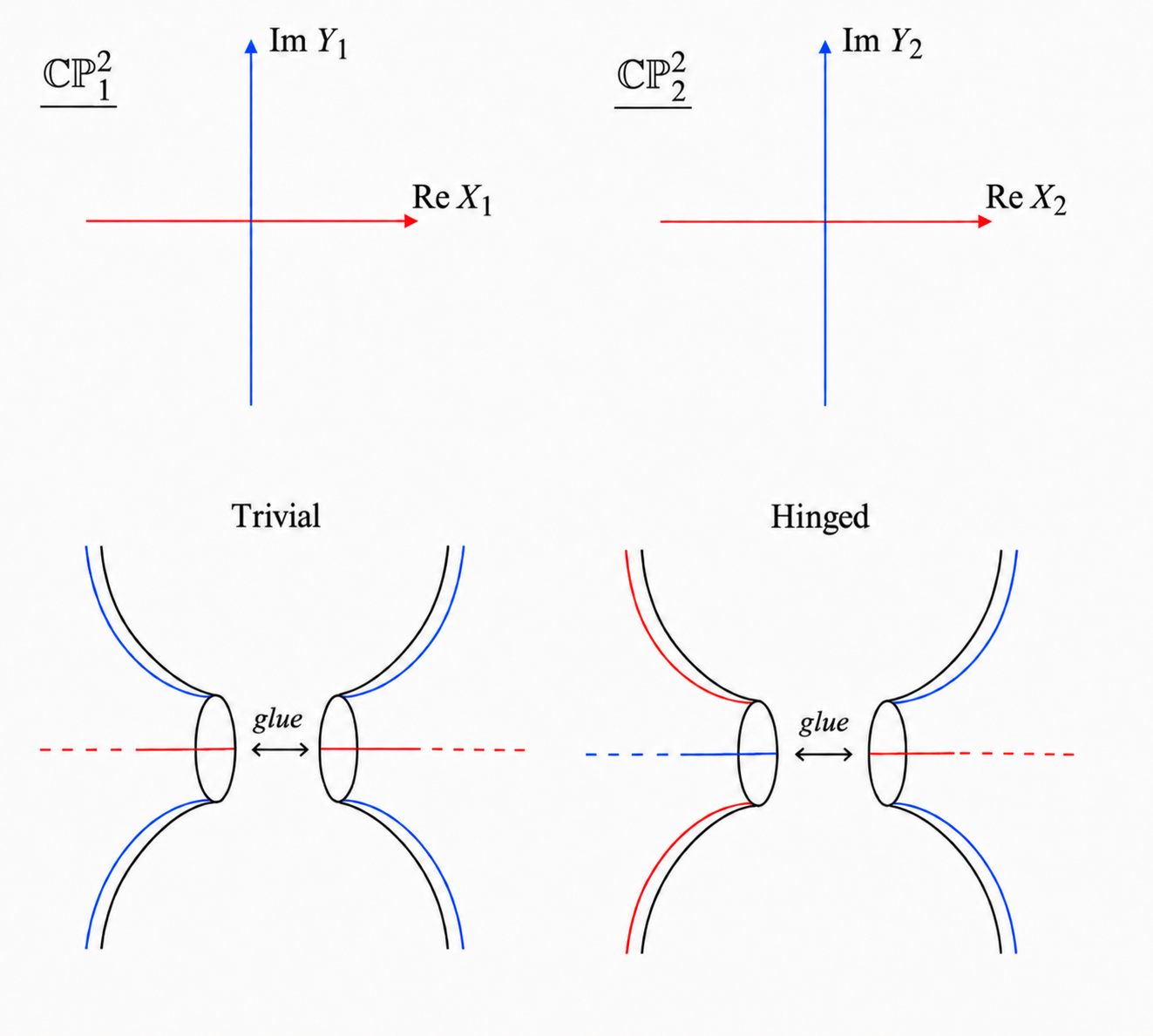}
    \caption{Trivial vs Hinge connect sum}
    \label{fig:hinge} 
\end{figure}

\vspace{5mm}

\begin{constr}\label{constr:2}
    Consider a single copy of $\cp^2$ with some fixed projective coordinate system $[X:Y:Z]$, and define the following self-diffeomorphisms.\begin{align*}
        f_X:&[X:Y:Z]\mapsto[-X:Y:Z]\\
        f_Y:&[X:Y:Z]\mapsto[X:-Y:Z]\\
        f_Z:&[X:Y:Z]\mapsto[X:Y:-Z]\\
        J:&[X:Y:Z]\mapsto[\bar{X}:\bar{Y}:\bar{Z}]\\
        J':&[X:Y:Z]\mapsto[\tilde{X}:\tilde{Y}:\tilde{Z}]
    \end{align*}where $\tilde X$ denotes reflection of $X$ about the imaginary axis (which is then conjugate to conjugation by multiplication by $i$). Define $\tilde Y$ and $\tilde Z$ similarly.

    Note that all five maps commute and that $f_X f_Y=f_Z$.  Also note that $f_X,f_Y,f_Z$ act by $+1$ on $H_2(\cp^2,\Z)$, and $J$ and $J'$ act by $-1$. The fixed set of the group $\ang{f_X,f_Y,J}$ is the three points $[1:0:0],[0:1:0],$ and $[0:0:1]$. We describe a construction of $M_4$ which attaches three additional copies of $\cp^2$ to one central copy at neighborhoods of those points $[1:0:0],[0:1:0],$ and $[0:0:1]$.
    
    The resulting construction has the following properties: label each of the three attached copies of $\cp^2$ as $M_X$, $M_Y$, and $M_Z$, according to their attaching points. Let $M_c$ denote the central component.  The maps $f_X,f_Y,f_Z,J$ extend to the manifold $M_4$ as follows. \begin{enumerate}
        \item $f_X$ extends as conjugation on $M_Y$, as $J'$ on $M_Z$, and as $f_X$ on $M_X$.
        \item $f_Y$ extends as conjugation on $M_Z$, as $J'$ on $M_X$, and as $f_Y$ on $M_Y$.
        \item $f_Z$ extends as conjugation on $M_X$, as $J'$ on $M_Y$, and as $f_Z$ on $M_Z$.
        \item $J$ extends to $f_X$ on $M_X$, $f_Y$ on $M_Y$, and $f_Z$ on $M_Z$.
    \end{enumerate}
    In particular, in the basis $([M_c],[M_X],[M_Y],[M_Z])$ where $[\cdot]$ denotes the hyperplane class of that copy of $\cp^2$, these functions act by \begin{align*}
        &(f_X)_*=\begin{pmatrix}
            1&0&0&0\\
            0&1&0&0\\
            0&0&-1&0\\
            0&0&0&-1\\
        \end{pmatrix}
        &(f_Y)_*=\begin{pmatrix}
            1&0&0&0\\
            0&-1&0&0\\
            0&0&1&0\\
            0&0&0&-1\\
        \end{pmatrix}\\
        &(f_Z)_*=\begin{pmatrix}
            1&0&0&0\\
            0&-1&0&0\\
            0&0&-1&0\\
            0&0&0&1\\
        \end{pmatrix}
        &(J)_*=\begin{pmatrix}
            -1&0&0&0\\
            0&1&0&0\\
            0&0&1&0\\
            0&0&0&1\\
        \end{pmatrix}
    \end{align*}

    To achieve this, simply attach each of the three copies $M_X,M_Y,M_Z$ at the three fixed points of the action, all by nontrivial hinge connect sums.
\end{constr}

\begin{figure}[hbt!] 
    \centering
    \includegraphics[width=0.7\textwidth]{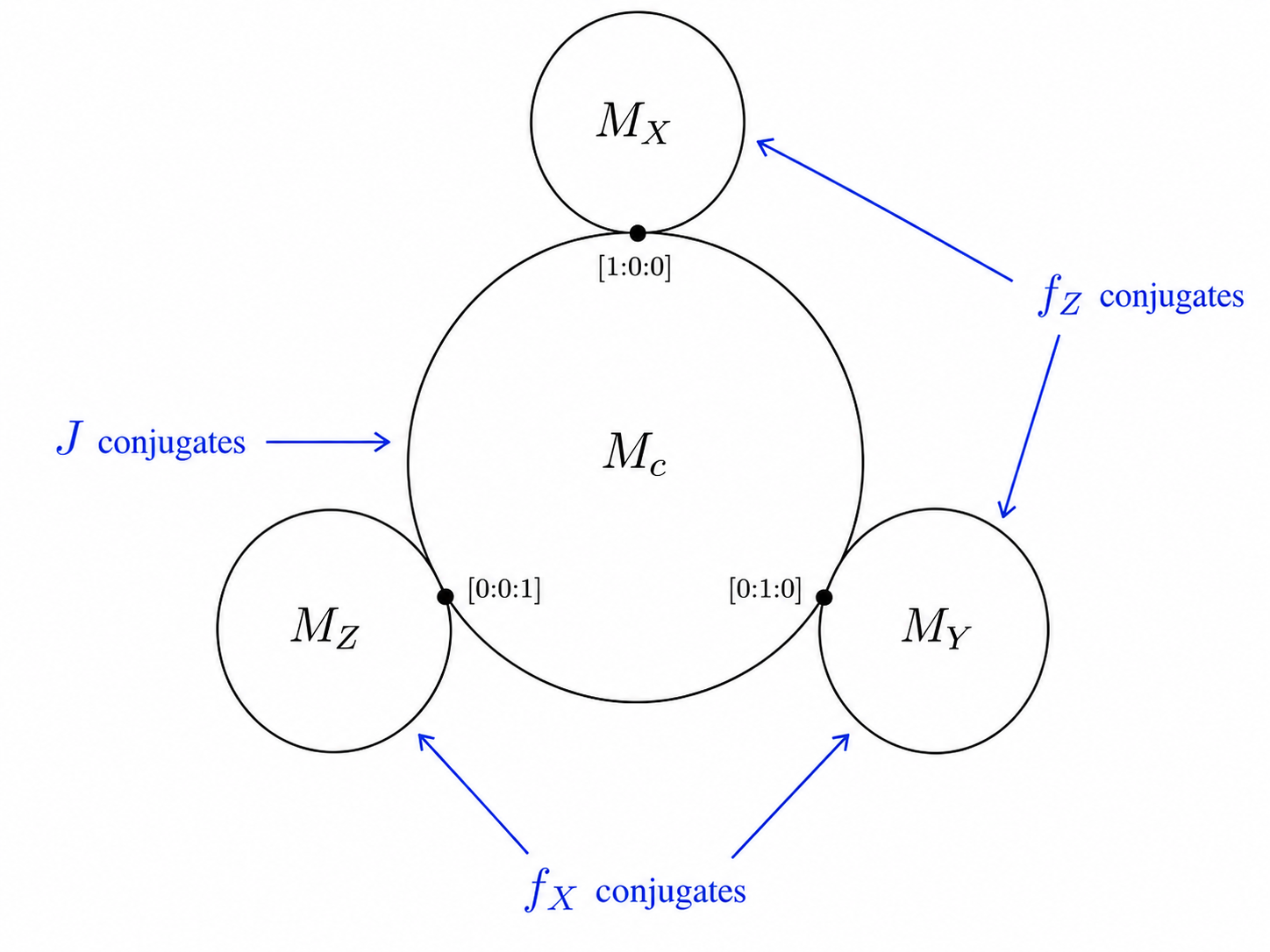}
    \caption{\Cref{constr:2}}
    \label{fig:3hinge} 
\end{figure}

\vspace{5mm}

\begin{constr}\label{constr:3}
    Extend the ideas of \Cref{constr:2} from a local to global model, as follows.  A copy of $\cp^2$, endowed with some projective coordinates $[X:Y:Z]$, comes equipped with three fixed points of the group of diffeomorphisms $\ang{f_X,f_Y,J}$ which are called its \textit{coordinate points}.  Given two copies of $\cp^2$, they can be connected to each other in a neighborhood of respective coordinate points, via either the hinged or standard connect sum from \Cref{constr:1}. By a $\cp^2$\textit{-tree}, denote a copy of $M_n$ given locally in projective coordinates on each copy of $\cp^2$, and where different copies are connected at respective coordinate points by hinge or standard connect sums in this way.  Such a $\cp^2$-tree gives an adjacency graph whose vertices are the copies of $\cp^2$ and where edges denote a local connect-sum.  (We can also choose to 2-color the edges to distinguish trivial and hinge connect sums.)  Any such $\cp^2$-tree of size $n$ yields an action of $(\Z/2\Z)^3\leq G_n\cong (\Z/2\Z)^n$ on $M_n$, induced by $\ang{f_X,f_Y,J}$ on any given copy of $\cp^2$.  A chain of $\cp^2$s is a $\cp^2$-tree whose associated adjacency graph is a path.

    \begin{figure}[hbt!] 
    \centering
    \includegraphics[width=0.7\textwidth]{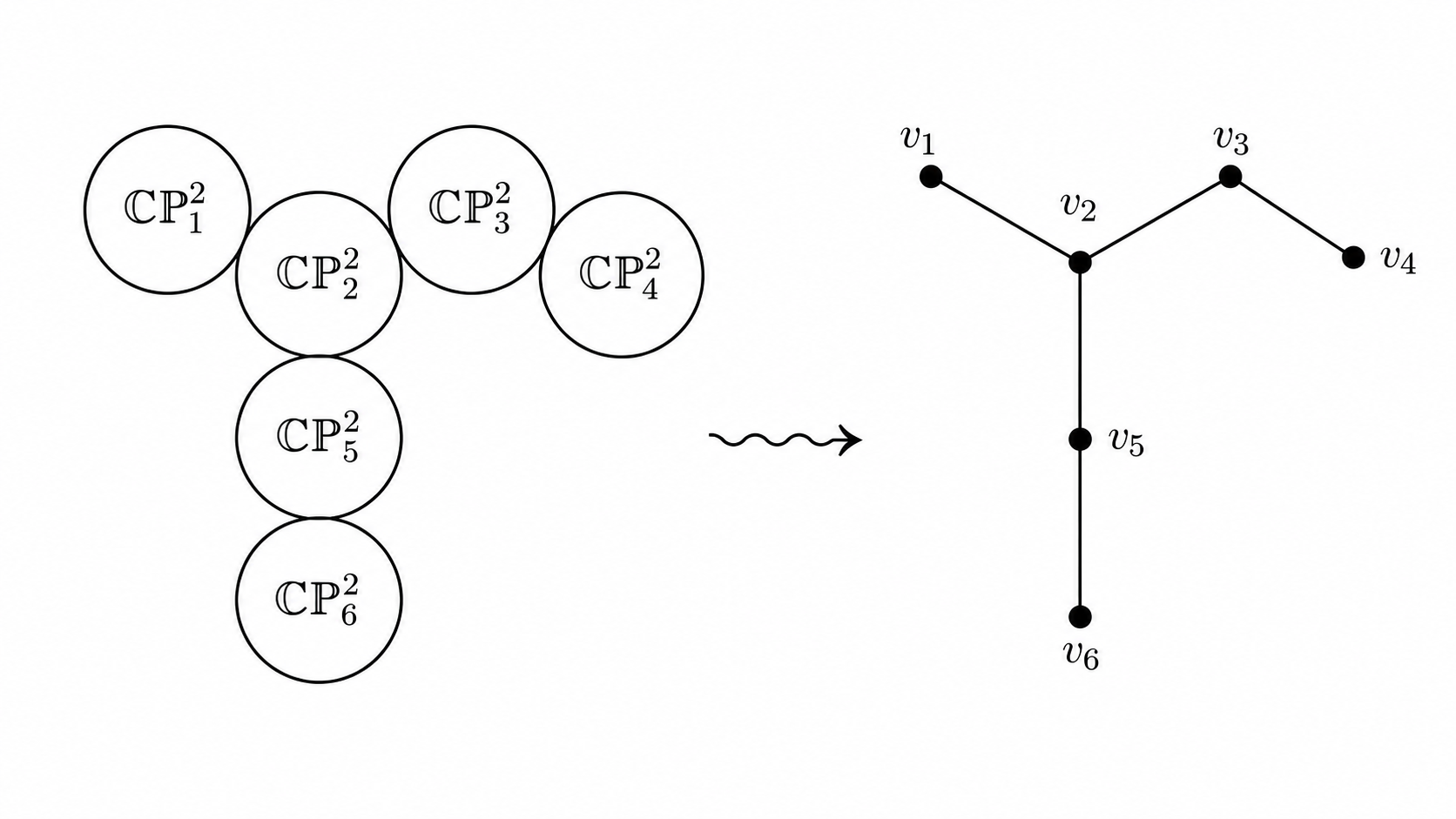}
    \caption{An example of \Cref{constr:3}, with its adjacency graph to the right. The graph does not depend on whether connect sums are trivial or hinged}
    \label{fig:graph-example} 
    \end{figure}
\end{constr}

\vspace{5mm}

\begin{eg}\label{eg:g3}
    The group $G_3\cong (\Z/2\Z)^3$ is realizable in $\diff^+(M_3)$.  To see this, consider a chain of three $\cp^2$s, with both connect sums hinged.  Then the group $\ang{f_X,f_Y,J}$ on any given copy induces precisely a realization of $G_3$ in $\diff^+(M_3)$.
\end{eg}

\vspace{5mm}

\begin{eg}\label{eg:g4}
    Call two subgroups of $G_n\cong (\Z/2\Z)^n\leq O(H_{M_n})$ \textit{permutation-equivalent} if there is an isomorphism between them induced by some permutation of the standard basis vectors $e_i,\dots,e_n$ of $H_2(M_n;\Z)$ (the hyperplane classes of each $\cp^2$).  Clearly, realizability is invariant under permutation-equivalence.
    
    Consider $G_4$ generated by reflections $e_1,e_2,e_3,e_4$ on the standard basis vectors of $H_2(M_n;\Z)$.  To apply \Cref{constr:3} in a nontrivial way (i.e., to actually realize a group of rank 3), our configuration graph of connect sums must contain a vertex of degree 3.  Then there is only one valid tree to consider up to tree isomorphism, the star graph.  By considering different choices of standard and hinged connections along each edge, \Cref{constr:3} realizes two different groups up to permutation-equivalence. By choosing all three edges hinged (as in \Cref{fig:3hinge}), realize the group $\ang{e_1,e_2e_3,e_3e_4}$ where $e_1$ denotes reflection on the central copy of $\cp^2$.  Choosing only two hinged connections and one standard, realizes the group $\ang{e_1,e_2,e_3e_4}$, where here $e_3$ and $e_4$ reflect on the two copies of $\cp^2$ which are linked by a standard connection.
    
    Similarly, consider $G_5$ with standard generators $e_1,\dots,e_5$.  There is similarly only one graph on five vertices with degree at most 3 and containing a vertex of degree 3.  (For example, begin with the star graph of size 4 and connect an additional vertex to one of its leaves.)  Then by direct computation, \Cref{constr:3} realizes precisely the following groups on $M_5$, up to permutation-equivalence:
    \[\ang{e_1,e_2,e_3e_4e_5},\ang{e_1,e_2e_3,e_4e_5},\ang{e_1,e_2e_3,e_3e_4e_5},\ang{e_1e_2,e_3e_4,e_4e_5}.\]
\end{eg}

\vspace{5mm}

One other application of \Cref{constr:3} is to prove the first part of \Cref{thm:large_rank}.
\begin{proof}[Proof of the first part of \Cref{thm:large_rank}]
    Realize arbitrary $H\leq G_n$ of rank 2 in $\diff^+(M_n)$ as follows.  Consider two generators of $H$, called $\phi$ and $\psi$.  Let $S_\phi\coloneqq\{e_1,\dots,e_n\}\cap V_{-1}(\phi)$, where $V_{-1}(\phi)$ denotes the $(-1)$-eigenspace of $\phi$.  In other words $S_\phi$ is the set of basis vectors reflected by $\phi$. Define $S_\psi$ similarly.  Now define sets \[A=S_\phi\setminus S_\psi, B=S_\psi\setminus S_\phi, C=S_\phi\cap S_\psi, D=(S_\phi\cup S_\psi)^c.\]
    
    First, assume that $D\neq\emptyset$.  Start with a chain of $\abs{D}$ $\cp^2$s, all attached trivially.  Then to a leaf $N$ of the chain, attach two more trivially-connected chains of lengths $\abs{A}$ and $\abs{B}$, each via a hinge sum.  Finally, to any available member of the original chain, attach with a hinge sum a trivially connected chain of length $\abs{C}$.  Then use $f_X$ and $f_Y$ on $N$ to realize $H$.

    In the case that $\abs{D}=0$ but $\abs{C}\neq0$, the construction is similar. Start with a chain of $\abs{C}$ $\cp^2$s using all trivial connect sums, and to a leaf $N$ attach trivially-connected chains of lengths $\abs{A}$ and $\abs{B}$, each via a hinge sum.  Then use $Jf_X$ and $Jf_Y$ on $N$ to realize $H$.

    If $\abs{C}=\abs{D}=0$, then simply begin instead with \Cref{constr:1}, and attach trivially connected chains trivially to either starting copy of $\cp^2$, of lengths $\abs{A}-1$ and $\abs{B}-1$, respectively.

    Of course, it cannot be that $\abs{C}=\abs{D}=\abs{A}=0$, since then $\phi$ would act trivially and hence is the identity in $H$.

\begin{figure}[hbt!] 
    \centering

\begin{tikzpicture}[
  line cap=round,
  line join=round,
  circ/.style={draw=black,line width=1.15pt},
  dot/.style={fill=black},
  cut/.style={red,line width=2.5pt},
  brace/.style={
    blue,
    line width=1.25pt,
    decorate,
    decoration={brace,amplitude=7pt}
  },
  lab/.style={blue,font=\fontsize{22}{24}\selectfont}
]

\def\R{0.32}
\pgfmathsetmacro{\sep}{2*\R}
\def\theta{38}        
\def\cutHalf{0.34}    
\def\dotR{0.035}

\coordinate (O) at (0,0);
\draw[circ] (O) circle[radius=\R];

\foreach \i in {1,...,8}{
  \coordinate (V\i) at (0,{\i*\sep});
  \draw[circ] (V\i) circle[radius=\R];
}

\foreach \t in {8.62,8.92,9.22}{
  \fill[dot] (0,{\t*\sep}) circle[radius=\dotR];
}
\coordinate (VT) at (0,{10.15*\sep});
\draw[circ] (VT) circle[radius=\R];

\foreach \i in {1,...,5}{
  \coordinate (L\i) at ({-\i*\sep*cos(\theta)},{-\i*\sep*sin(\theta)});
  \draw[circ] (L\i) circle[radius=\R];
}
\foreach \t in {5.62,5.92,6.22}{
  \fill[dot] ({-\t*\sep*cos(\theta)},{-\t*\sep*sin(\theta)})
    circle[radius=\dotR];
}
\foreach \t in {7.00,7.95}{
  \draw[circ] ({-\t*\sep*cos(\theta)},{-\t*\sep*sin(\theta)})
    circle[radius=\R];
}

\foreach \i in {1,...,5}{
  \coordinate (R\i) at ({\i*\sep*cos(\theta)},{-\i*\sep*sin(\theta)});
  \draw[circ] (R\i) circle[radius=\R];
}
\foreach \t in {5.62,5.92,6.22}{
  \fill[dot] ({\t*\sep*cos(\theta)},{-\t*\sep*sin(\theta)})
    circle[radius=\dotR];
}
\foreach \t in {7.00,7.95}{
  \draw[circ] ({\t*\sep*cos(\theta)},{-\t*\sep*sin(\theta)})
    circle[radius=\R];
}


\coordinate (MV) at ($(V6)!0.5!(V7)$);
\draw[cut] ($(MV)+(-0.43,0)$) -- ($(MV)+(0.43,0)$);

\coordinate (ML) at ($(O)!0.5!(L1)$);
\draw[cut]
  ($(ML)+({\theta-90}:\cutHalf)$) --
  ($(ML)+({\theta+90}:\cutHalf)$);

\coordinate (MR) at ($(O)!0.5!(R1)$);
\draw[cut]
  ($(MR)+({90-\theta}:\cutHalf)$) --
  ($(MR)+({270-\theta}:\cutHalf)$);


\draw[brace,decoration={brace,amplitude=7pt,mirror}]
  ($({-7.00*\sep*cos(\theta)},{-11.00*\sep*sin(\theta)})+({90+\theta}:0.56)$) -- ($({0.5*\sep*cos(\theta)},{-3*\sep*sin(\theta)})+({90+\theta}:0.56)$);
\node[lab] at
  ($({-1.5*\sep*cos(\theta)},{-9*\sep*sin(\theta)})+({90+\theta}:1.18)$)
  {$A$};

\draw[brace]
  ($({0.72*\sep*cos(\theta)},{-0.72*\sep*sin(\theta)})+({90-\theta}:0.56)$) --
  ($({8.5*\sep*cos(\theta)},{-8.5*\sep*sin(\theta)})+({90-\theta}:0.56)$);
\node[lab] at
  ($({4.5*\sep*cos(\theta)},{-4*\sep*sin(\theta)})+({90-\theta}:1.18)$)
  {$B$};

\draw[brace]
  (-0.6,{6.55*\sep}) -- (-0.6,{10.52*\sep});
\node[lab] at (-1.62,{8.58*\sep}) {$C$};

\draw[brace]
  (-0.6,{0.05*\sep}) -- (-0.6,{6.45*\sep});
\node[lab] at (-1.62,{3.5*\sep}) {$D$};

\end{tikzpicture}
    
    \caption{Realization in rank 2, as in the proof of the first part of \Cref{thm:large_rank}.  Hinge connect sums are marked in red}
    \label{fig:rk2} 
\end{figure}
\end{proof}

\begin{rmk}
    No subgroups $H\leq G_n\leq O(H_{M_n})$ of rank at least $4$ can be realized in $\diff^+(M_n)$ via equivariant connect sums of $\cp^2$.  For example, Wilczynski and Hambleton-Lee proved that any finite group acting effectively by orientation-preserving diffeomorphisms on $\cp^2$ and trivially on its homology must embed into $PGL(3,\C)$ (for example, see \cite{mmbz}).  Since the $(\Z/2\Z)^k$ embeds into $PGL(3,\C)$ only if $k\leq 2$ and since $\aut(H_2(\cp^2,\Z);\Z)\cong\Z/2\Z$, then $(\Z/2\Z)^4$ does not act effectively on $\cp^2$.
\end{rmk}

\subsection{Tools for obstructing high rank realization}\label{sec:tools}

We introduce a few tools for obstructing realization in $G_n$.  Let $G=\Z/p\Z$ act by orientation-preserving diffeomorphisms on a smooth, closed, oriented 4-manifold $M$.  Its fixed set $M^G\subseteq M$ is then a disjoint union of isolated points and embedded surfaces (see e.g. \cite{blk3}, Proof of Lemma 3.5 (3)).

\subsubsection{Betti numbers of $M^G$}

The action of $G$ on $M$ induces an action of $G$ on $H_2(M;\Z)$.  By \cite{ed}, $H_2(M;\Z)$ decomposes into a direct sum of indecomposable representations of three types: trivial of dimension one, cyclotomic of dimension $p-1$, and regular of dimension $p$.  Note that the representations of cyclotomic type (resp. regular type) need not be isomorphic as $G$-representations to $\Z[\zeta_p]$ for $\zeta_p$ a primitive $p$th root of unity (resp. to $\Z[G]$), when $p\geq 23$.  However, this paper only uses small primes $p<23$ in all applications.  Notice that in dimension 2, a trivial representation is given by the matrix $(1)$, a cyclotomic representation is given by $(-1)$, and a regular representation is given by the matrix \[\sigma=\begin{pmatrix} 0&1\\1&0 \end{pmatrix}.\]

\begin{prop}[\cite{ed}, Proposition 2.4]\label{prop:ed}
    Let $G=\Z/p\Z$ act by orientation-preserving diffeomorphisms on a smooth, closed, oriented 4-manifold $M$.  Let $t$ be the number of trivial summands and $c$ the number of cyclotomic summands in $H_2(M;\Z)$.  
    \begin{enumerate}
        \item If $M^G\neq\emptyset$ then
        \begin{enumerate}
            \item $\beta_1(M^G)=c$ \text{ and}
            \item $\beta_0(M^G)+\beta_2(M^G)=t+2$,
        \end{enumerate}
        where $\beta_k(M^G)$ denotes the $k$th mod-$p$ Betti number of $M^G$.

        \item The Euler characteristic $\chi(M^G)=t-c+2$.  In particular, if $t-c+2\neq0$, then $M^G\neq\emptyset$ and part (1) applies. 
    \end{enumerate}    
\end{prop}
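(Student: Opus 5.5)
Both parts will come from Smith theory with $\F_p$ coefficients, together with the standard splitting $H^*_G(M;\F_p)$ versus $H^*(M^G;\F_p)$. The key identity is the Borel--Smith equality $\sum_k \beta_k(M^G)=\sum_k \dim H^k(M;\F_p)$ "up to the non-free part." More precisely, I will use the localization theorem: when $M^G\neq\emptyset$, the total mod-$p$ Betti number of $M^G$ equals the rank of the $H^*(BG;\F_p)$-module $H^*_G(M;\F_p)$ after inverting the polynomial generator. The plan is to compute that rank from the integral representation decomposition of $H_2(M;\Z)$ and then split it by degree parity.

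\textbf{Step 1 (part 2, Euler characteristic).} First I will prove part 2, which is independent of Step 2 and uses the Lefschetz fixed point theorem for a generator $g$ of $G$. Since $M$ is closed and oriented, $H_0$ and $H_4$ are trivial representations. For a simply connected $M$ (the case relevant here; in general the odd-degree terms enter similarly) we get
\[
L(g)=2+\operatorname{tr}(g_*|H_2).
\]
Trivial summands contribute $1$ each, cyclotomic summands contribute $\sum_{\zeta\neq1}\zeta=-1$ each, and regular summands contribute $0$. So $L(g)=t-c+2$. Since the fixed set of a finite-order isometry is a disjoint union of points and surfaces, we have $\chi(M^g)=L(g)$, and $M^g=M^G$ because $p$ is prime. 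A nonzero value of $t-c+2$ therefore forces $M^G\neq\emptyset$.

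\textbf{Step 2 (part 1, total Betti number and its parity split).} Next I will compute the equivariant cohomology in high degrees. The Serre spectral sequence of $M\to M_G\to BG$ has $E_2=H^*(G;H^*(M;\Z))$. The Tate cohomology of each summand is as follows: trivial summands give $\F_p$ in every degree, cyclotomic summands give $\F_p$ in alternating degrees (shifted parity relative to trivial), and regular summands give nothing. When $M^G\neq\emptyset$ the spectral sequence degenerates in high degrees, since a fixed point gives a section. Comparing with $H^*_G(M^G)\cong H^*(M^G)\otimes H^*(BG)$ in high degrees, the even-degree rank gives $\beta_0+\beta_2=t+2$, where the $+2$ comes from $H^0$ and $H^4$. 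The odd-degree rank gives $\beta_1=c$.

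The main obstacle is justifying this degeneracy and bookkeeping with integral coefficients, especially the claim that regular and cyclotomic summands contribute as stated. This is exactly Edmonds' argument in \cite{ed}, so I would follow it, or simply cite it, as the statement does.
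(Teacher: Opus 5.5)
The paper gives no proof of its own here; it quotes Edmonds. Your plan (Lefschetz for part 2, Borel spectral sequence plus localization for part 1) is the standard route, and Step 1 is fine as written for simply connected $M$.

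The genuine gap is the degeneracy claim in Step 2. A fixed point gives a section $BG\to M_G$, but that only shows $H^*(BG)\to H^*_G(M)$ is injective, i.e.\ no differential lands in the bottom row $E^{*,0}$. It does not control $d_3\colon E_3^{i,4}=H^i(G;\Z)\to E_3^{i+3,2}=H^{i+3}(G;H^2(M;\Z))$, which could a priori be nonzero in high degrees. A nonzero $d_3$ would make $\beta_0+\beta_2$ and $\beta_1$ fall short of $t+2$ and $c$ by the same amount. So neither the section nor the Euler characteristic from Step 1 detects it; without collapse you only get $\beta_1\le c$ and $\beta_0+\beta_2\le t+2$.

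The missing idea is to use the fixed point a second time. For $x\in M^G$ with an invariant disk $D$, the equivariant Thom class of the representation $T_xM$ lies in $H^4_G(D,\partial D)\cong H^4_G(M,M\setminus\operatorname{int}D)$. Its image in $H^4_G(M)$ restricts to the fundamental class of $M$, so $E_2^{0,4}$ consists of permanent cycles. $H^*(BG)$-linearity of the differentials then makes the whole top row permanent, and together with the section the spectral sequence collapses.

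The coefficient bookkeeping also needs repair.
Integrally, $\hat H^i(G;\Z)$ is $\Z/p$ for $i$ even and $0$ for $i$ odd, while a cyclotomic lattice has $\Z/p$ exactly in odd degrees. So ``trivial summands give $\F_p$ in every degree'' is wrong.
Mod $p$, both kinds of summand contribute $\F_p$ in every degree. Likewise $H^n(M^G\times BG;\F_p)$ has dimension $\beta_0+\beta_1+\beta_2$ for every large $n$, so comparing with $H^*(M^G)\otimes H^*(BG)$ yields no parity split.

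You have two ways to fix this:
\begin{enumerate}
\item Work integrally on both sides, comparing orders of groups and including the Tor terms of the integral K\"unneth formula; these matter for $\rp^2$-components when $p=2$.
\item More simply, stay mod $p$. Read off $\beta_0+\beta_1+\beta_2=t+c+2$ from collapse and localization, and combine it with $\beta_0-\beta_1+\beta_2=t-c+2$ from Step 1.
\end{enumerate}

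Finally, replace the hedge ``in general the odd-degree terms enter similarly'' with an explicit hypothesis $H_1(M;\Z)=0$. Edmonds works with simply connected $M$, and the paper's restatement drops this. Without it the statement is false: $\Z/p$ rotating the circle factor of $S^1\times S^3$ acts freely with $t=c=0$, contradicting part 2. Your Step 2 also uses $H^1=H^3=0$ and $H^2(M;\F_p)=H^2(M;\Z)\otimes\F_p$.
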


\subsubsection{The Hirzebruch $G$-signature theorem}
Again, let $M$ be a smooth, closed, oriented 4-manifold on which $G=\Z/p\Z$ acts by orientation-preserving diffeomorphisms.  At an isolated point $p\in M^G$, the $G$-action on $T_pM$ preserves a complex structure compatible with the ambient orientation and hence is described by a 2-dimensional, reducible complex representation of the form \[g\in G:(z_1,z_2)\mapsto (\chi_1(g)z_1,\chi_2(g)z_2)\] for two nontrivial complex characters $\chi_1,\chi_2$ of $G$.  However, presentation is not unique, since the conjugate complex structure also has this property (and no other complex structures), for which the characters are $(\bar{\chi_1},\bar{\chi_2})$.  Then define the \textit{signature defect} at $z$ by \[\de_z\coloneqq\sum_{g\in G\setminus\{1\}}\frac{(1+\chi_1(g))(1+\chi_2(g))}{{(1-\chi_1(g))(1-\chi_2(g))}}=\sum_{\zeta\in\mu_p\setminus\{1\}}\frac{(1+\zeta)(1+\zeta^q)}{{(1-\zeta)(1-\zeta^q)}}.\]  The second equality follows because $G$ has prime order, and so $\chi_2=\chi_1^q$ for some $q\in\F_p^*$, and both characters are generators of the group of characters of $G$.

For each 2-dimensional path component $C\subset M^G$, let \[\de_C\coloneqq\frac{p^2-1}{3}\ (C\cdot C),\] where $C\cdot C$ denotes the self-intersection of $C$ with itself.  When $C$ is orientable, this is simply equal to the standard intersection product $[C]\cdot [C]$ for $[C]\in H_2(M,\Z)$ the homology class of $C$ as an embedded surface.  In the non-orientable case, there is still a sensible self-intersection number as follows.  Isotope a second copy of $C$ in $M$ so that it is transverse to its original position; denote this isotoped surface by $C'$.  Now consider an intersection point $z\in C\cap C'$, and note that $T_zM=T_zC\oplus T_z C'$.  A choice of basis $(z_1,z_2)$ for $T_zC$ induced a basis $(z_1',z_2')$ for $T_zC'$ since $C'$ was produced by an isotopy of $C$.  Hence define the signed self-intersection of $C$ at $z$ to be the orientation of the basis $(z_1,z_2,z_1',z_2')$, relative to the ambient orientation on $M$.  By construction, this sign is invariant both under a change of basis for $C$ and under a reordering of $C$ and $C'$. (Clearly, this construction reduces to the standard intersection sign in the orientable case.)  Define $C\cdot C$ to the sum of local self-intersection numbers.

Since $M/G$ is a rational homology manifold, it has a well-defined signature $\sigma(M/G)$.  In practice, we identify this signature with the signature of the restricted intersection pairing on $H_2(M;\R)^G$.  We can now state the main theorem.
\begin{theorem}[Hirzebruch $G$-Signature Theorem, see e.g. \cite{hirze}, (12) on page 177]\label{thm:gsig}
    Let $M$ be a closed, oriented 4-manifold on which $G=\Z/p\Z$ acts by orientation-preserving diffeomorphisms.  Then \[p\cdot\sigma(M/G)-\sigma(M)=\sum_z\de_z + \sum_C\de_C,\] where the sums range over all 0- and 2-dimensional path components of $M^G$, respectively.
\end{theorem}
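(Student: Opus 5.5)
The plan is to derive the formula from the Atiyah--Singer $G$-signature theorem (equivalently, the Atiyah--Bott holomorphic Lefschetz formula applied to the signature operator), which we treat as a black box. For $g\in G$, define the $g$-signature $\mathrm{sign}(g,M)\coloneqq \operatorname{tr}(g|H^+)-\operatorname{tr}(g|H^-)$, where $H_2(M;\R)=H^+\oplus H^-$ is a $G$-invariant splitting into positive and negative definite subspaces for $Q$. Such a splitting exists after averaging a metric, since $G$ preserves $Q$.

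\textbf{Step 1: averaging over $G$.} Averaging over $G$ projects onto the invariant subspace, and this projection respects the splitting. Hence
\[\frac1p\sum_{g\in G}\mathrm{sign}(g,M)=\dim (H^+)^G-\dim (H^-)^G,\]
which is the signature of $Q$ restricted to $H_2(M;\R)^G$. With the identification $\sigma(M/G)=\sigma(Q|_{H_2(M;\R)^G})$ adopted above, this gives
\[p\,\sigma(M/G)-\sigma(M)=\sum_{g\neq 1}\mathrm{sign}(g,M).\]
So it remains to express each $\mathrm{sign}(g,M)$ with $g\neq1$ as a sum of local fixed-point contributions.

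\textbf{Step 2: local contributions.} The $G$-signature theorem says $\mathrm{sign}(g,M)=\sum_F \nu_F(g)$, summed over the components $F$ of $M^g=M^G$; the two fixed sets agree because $p$ is prime. The contributions are as follows.
\begin{itemize}
\item At an isolated point $z$ with rotation angles $\alpha,\beta$, the contribution is $-\cot(\alpha/2)\cot(\beta/2)$. Writing $\chi_j(g)=e^{i\alpha_j}$ and using $\frac{1+e^{i\alpha}}{1-e^{i\alpha}}=i\cot(\alpha/2)$, this equals $\frac{(1+\chi_1(g))(1+\chi_2(g))}{(1-\chi_1(g))(1-\chi_2(g))}$. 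Summing over $g\neq1$ gives $\de_z$. Replacing $(\chi_1,\chi_2)$ by their conjugates changes both cotangents by a sign, so the expression is independent of the chosen complex structure.
\item For a fixed surface $C$, $g$ rotates the normal bundle by a constant angle $\theta$, and the contribution is $\csc^2(\theta/2)\,(C\cdot C)$. As $g$ ranges over $G\setminus\{1\}$, $\theta$ ranges over $2\pi k/p$ for $k=1,\dots,p-1$. The elementary identity $\sum_{k=1}^{p-1}\csc^2(\pi k/p)=\frac{p^2-1}{3}$ then yields $\de_C$.
\end{itemize}

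\textbf{Step 3: non-orientable fixed surfaces.} If $C$ is non-orientable, the normal bundle still carries a well-defined Euler number, namely the twisted self-intersection defined above. The local index computation is unaffected, since it only uses the normal rotation and the Euler class of the normal bundle twisted by the orientation character. So the same formula holds with the self-intersection $C\cdot C$ as defined above.

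\textbf{Main obstacle.} I expect the delicate point to be the sign and orientation conventions in Step 2. One must check that the isolated-point term comes out as written rather than its negative, and that the normal-bundle term carries the correct sign relative to the ambient orientation; the non-orientable case is where this is easiest to get wrong. I would verify the conventions on a test case: $\cp^2$ with $g=f_X$ and $p=2$. Here $\mathrm{Fix}(f_X)=\cp^1\sqcup\{pt\}$, $\sigma(M/G)=1$, and $\sigma(M)=1$. The isolated point has rotation angles $(\pi,\pi)$ and contributes $0$; the line has $C\cdot C=1$ and $\de_C=(4-1)/3=1$. Both sides of the formula equal $1$, which fixes the signs.
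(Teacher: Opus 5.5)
Your proposal is correct and is essentially the standard derivation behind the cited formula: the paper gives no proof of its own, and Hirzebruch's (12) comes from exactly your steps, namely averaging the Atiyah--Singer $g$-signatures over $G$ and summing the local terms $-\cot(\alpha/2)\cot(\beta/2)$ and $\csc^2(\theta/2)\,(C\cdot C)$ via $\sum_{k=1}^{p-1}\csc^2(\pi k/p)=(p^2-1)/3$. Two minor caveats. First, your $f_X$ test on $\cp^2$ cannot check the sign of the isolated-point term, since that term vanishes when $p=2$; the map $[X:Y:Z]\mapsto[X:\omega Y:\omega^2 Z]$ with $\omega=e^{2\pi i/3}$ does check it (three isolated points with $\de_z=2/3$ each, matching $3\cdot 1-1=2$). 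Second, non-orientable fixed surfaces occur only for $p=2$, because for odd $p$ the normal rotation makes $N_C$ a complex line bundle, so your Step 3 is just the involution case $\mathrm{sign}(g,M)=\sum_C e(N_C)$ of Atiyah--Singer.
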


\subsubsection{Finite group actions on surfaces}
In particular, the proof of \Cref{thm:large_rank} eventually reduces to finite group actions on surfaces. For a fuller exposition of the theory of finite groups acting on surfaces, see \cite{primer}.  Recall that if a finite group $G$ acts on the genus-$g$ surface $S_g$, then the action can be isotoped so that $G$ acts by isometries in some hyperbolic metric. Then the following two theorems control the size of groups that act faithfully on $S_g$.

\begin{theorem}\label{thm:84(g-1)}
    If $X$ is a closed hyperbolic surface of genus $g\geq 2$, then \[\abs{\isom^+(X)}\leq 84(g-1).\]
\end{theorem}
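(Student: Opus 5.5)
This is Hurwitz's theorem. I would prove it with the Riemann--Hurwitz formula applied to the quotient orbifold, followed by a case analysis that minimizes the orbifold Euler characteristic.

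\emph{Setup.} Put $G=\isom^+(X)$. Since $X$ is a closed hyperbolic surface, $G$ is a discrete subgroup of the isometry group of a compact manifold, so it is finite.

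\emph{The quotient.} The action of $G$ is orientation-preserving and by isometries, so each nontrivial element has isolated fixed points. Therefore $X\to X/G$ is a branched cover of a closed orientable surface of genus $h$. Let the branch points have cone orders $m_1,\dots,m_r\geq 2$. Riemann--Hurwitz then gives
\[2g-2=\abs{G}\Big(2h-2+\sum_{i=1}^r\big(1-\tfrac1{m_i}\big)\Big).\]
Write $\mu$ for the bracketed quantity. Because $g\geq 2$, we have $\mu>0$, and $\abs{G}=2(g-1)/\mu$. So the whole argument reduces to the purely arithmetic claim that every positive value of $\mu$ satisfies $\mu\geq 1/42$.

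\emph{Case analysis.}
\begin{itemize}
\item If $h\geq 2$, then $\mu\geq 2$.
\item If $h=1$, then $r\geq 1$ and $\mu\geq 1/2$.
\item If $h=0$, then $\mu=-2+\sum(1-1/m_i)$, and each term $1-1/m_i$ is at least $1/2$.
  \begin{itemize}
  \item If $r\geq 5$, then $\mu\geq 1/2$.
  \item If $r=4$, then $\mu>0$ forces some $m_i\geq 3$, which gives $\mu\geq 1/6$.
  \item If $r\leq 2$, then $\mu<0$, so this case does not occur.
  \item If $r=3$, then $\mu=1-\big(\tfrac1{m_1}+\tfrac1{m_2}+\tfrac1{m_3}\big)$. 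Order the cone orders so that $m_1\leq m_2\leq m_3$. Positivity forces $m_1\geq 2$, and the hyperbolicity condition $\sum 1/m_i<1$ then has to be minimized.
    \begin{itemize}
    \item For $m_1=2$ we need $1/m_2+1/m_3<1/2$. With $m_2=3$ this requires $m_3\geq 7$, giving $\mu=1/42$. With $m_2=4$ it requires $m_3\geq 5$, giving $\mu=1/20$. For $m_2\geq 5$ we get $\mu\geq 1/10$.
    \item For $m_1=3$ we have $(3,3,4)$, giving $\mu=1/12$, or larger values of $\mu$.
    \item For $m_1\geq 4$ we have $\mu\geq 1/4$.
    \end{itemize}
  \end{itemize}
\end{itemize}
In every case the minimum positive value is $\mu=1/42$, attained by the triangle group $(2,3,7)$.

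\emph{Conclusion and main difficulty.} It follows that $\abs{G}\leq 84(g-1)$. The only nontrivial step is making the $r=3$ enumeration exhaustive. I would check it by first fixing $m_1$ and then bounding $m_2$, which leaves only finitely many cases up to monotonicity in $m_3$.
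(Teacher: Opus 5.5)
Your proposal is correct and is the standard Riemann--Hurwitz (orbifold Euler characteristic) proof of Hurwitz's theorem. The paper gives no proof of its own; it quotes the classical result and points to the Farb--Margalit primer for this material, and the argument there is essentially yours, so your case analysis correctly identifies $\mu=1/42$, attained by the $(2,3,7)$ triangle orbifold, as the minimum.

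The one step you assert rather than prove is the finiteness of $\isom^+(X)$. Calling it ``discrete'' is exactly where hyperbolicity has to enter. It follows quickly from your own argument: $\isom^+(X)$ is a compact Lie group, your bound applies to each of its finite subgroups, and a positive-dimensional compact Lie group would contain a circle and hence cyclic subgroups of every order.
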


\begin{theorem}\label{thm:4g+2}
    Let $X$ be a closed hyperbolic surface of genus $g\geq 2$.  Then any element of $\isom^+(X)$ has order at most $4g+2$.
\end{theorem}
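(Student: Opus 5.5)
This is Wiman's classical bound. The plan is to reduce it to Riemann--Hurwitz for the cyclic group $\langle f\rangle\cong\Z/N\Z$ generated by the isometry $f$, together with the extra arithmetic constraints forced on the branch data because the group is cyclic. Let $X\to X/\langle f\rangle$ be the quotient, an orbifold of genus $h$ with cone points of orders $m_1\le\cdots\le m_r$, each $m_i\ge 2$ dividing $N$. Riemann--Hurwitz gives
\[2g-2 \;=\; N\Big(2h-2+\sum_{i=1}^r\big(1-\tfrac1{m_i}\big)\Big),\]
and since $g\ge2$ the bracket is a positive rational number $\mu$. The whole task is to show $\mu\ge \frac{2g-2}{4g+2}$, i.e.\ $N\le 4g+2$. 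Because the action is free away from the cone points, the action corresponds to a surjection
\[\theta:\pi_1^{orb}=\Big\langle a_1,b_1,\dots,a_h,b_h,c_1,\dots,c_r \;\Big|\; \prod_j [a_j,b_j]\prod_i c_i=1,\ c_i^{m_i}=1\Big\rangle\to\Z/N\Z\]
with $\theta(c_i)$ of exact order $m_i$. Since the target is abelian, the relation becomes $\sum_i\theta(c_i)=0$.

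The first step handles $h\ge1$ using only that constraint. If $h\ge2$, then $\mu\ge 2$ and so $N\le g-1$. If $h=1$, we must have $r\ge1$. In fact $r=1$ is impossible, since $\theta(c_1)=0$ would contradict $m_1\ge2$. Hence $r\ge2$, giving $\mu\ge 1$ and $N\le 2g-2$.

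The second step handles $h=0$ with $r\ge4$. When $r\ge5$ we get $\mu\ge \frac12$, hence $N\le 4g-4$. When $r=4$, we have $\mu=2-\sum 1/m_i$. Hyperbolicity excludes $(2,2,2,2)$, so $\mu\ge\frac16$, attained at $(2,2,2,3)$. That bound alone only yields $12(g-1)$, so we need the cyclic constraint. With $\sum\theta(c_i)=0$, three elements of order $2$ in $\Z/N$ sum to $N/2$, which forces the fourth to have order $2$ as well; this excludes $(2,2,2,m)$ with $m>2$. For the remaining data one checks $\mu\ge\frac12$, e.g.\ $(2,2,3,3)$ gives $\frac13$ but then $N=6$ directly. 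In this case I would simply bound $N\le\operatorname{lcm}(m_i)$ and compare with $2g-2=N\mu$ case by case.

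The main obstacle is $h=0$, $r=3$ (triangle data), where $\mu=1-\frac1{m_1}-\frac1{m_2}-\frac1{m_3}$ can be tiny. The key constraint is that since $\theta(c_3)=-\theta(c_1)-\theta(c_2)$ and $\theta$ is surjective, any two of the $\theta(c_i)$ generate $\Z/N$. Hence $\operatorname{lcm}(m_i,m_j)=N$ for every pair $i\ne j$, and in particular $m_3=N$ whenever $m_1,m_2<N$. I would first try to split on $m_1$.
\begin{itemize}
\item If $m_1=2$, then $N$ is even and $\operatorname{lcm}(2,m_2)=N$ forces $m_2\in\{N,N/2\}$ with $N/2$ odd in the latter case. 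The extremal case $(2,N/2,N)$ gives $2g-2=N/2-3$, i.e.\ exactly $N=4g+2$. The case $(2,N,N)$ gives $N=2g+1$-type bounds.
\item If $m_1=3$, the analogous analysis gives $(3,N/3,N)$ or $(3,N,N)$, with $N\le 3g$ or $N\le 3g+3$ roughly.
\item If $m_1\ge4$, then $\mu\ge 1-\frac14-\frac2{m_2}$ together with $m_2m_1\ge N$ gives a bound of the order of $N\le 2g+$const.
\end{itemize}
Checking each subcase is routine arithmetic, and in each one $N\le 4g+2$, with equality only for branch data $(2,2g+1,4g+2)$. That completes the proof.
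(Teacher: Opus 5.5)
The paper gives no proof of this statement. It is quoted as background (Wiman's bound, via the Farb--Margalit primer) and is never invoked later; only the $84(g-1)$ theorem is used. Your Riemann--Hurwitz plus cyclic-monodromy argument is the standard proof, and its structure is sound. The cases $h\ge1$ and $h=0$, $r\ge4$ are correct. The triangle cases $m_1\in\{2,3\}$ follow correctly from $\mathrm{lcm}(m_1,m_j)=N$ with $m_1$ prime. And $(2,2g+1,4g+2)$ is indeed the unique extremal data. However, the triangle case as written contains a false assertion and an unverified subcase.

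\textbf{False assertion.} You claim that pairwise $\mathrm{lcm}(m_i,m_j)=N$ forces $m_3=N$ whenever $m_1,m_2<N$. This is false. In $\Z/30\Z$ the elements $5,21,4$ have orders $6,10,15$ and sum to $0$. This is genuine branch data (with $g=11$) in which no $m_i$ equals $N$.

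\textbf{Unverified subcase.} For $m_1\ge4$, the estimate $\mu\ge\frac34-\frac2{m_2}$ with $m_2\ge\sqrt N$ only gives $N\le 4g+2$ once $N\ge 36$, so $5\le N\le 35$ is left unchecked. The bound you announce, $N\le 2g+\mathrm{const}$, is also wrong: $(4,N,N)$ with $8\mid N$ is realizable and gives $N=8g/3$.

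\textbf{Fix.} Both issues are resolved by splitting on whether $m_3=N$.
\begin{itemize}
\item If $m_3=N$ and $m_1\ge4$, then $\mu\ge\frac12-\frac1N$, so $N\le 4g-2$.
\item If $m_3<N$, then every $m_i<N$. Choose primes $p_i$ with $v_{p_i}(m_i)<v_{p_i}(N)$. The pairwise lcm condition forces $v_{p_i}(m_j)=v_{p_i}(N)$ for $j\ne i$. Hence the $p_i$ are distinct, and each $m_i$ is divisible by two distinct primes. So $m_i\ge 6$, $\mu\ge\frac12$, and $N\le 4g-4$.
\end{itemize}

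\textbf{Smaller slip.} The case $(2,N,N)$ gives $2g-2=N/2-2$, i.e.\ $N=4g$, not a bound of type $2g+1$. This value is attained by $y^2=x(x^{2g}-1)$. It is still below $4g+2$, so the final bound is unaffected.
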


When $g=1$, recall that if $(\Z/2\Z)^k$ acts faithfully on the torus $T^2$, then $k\leq 4$.  For $g=0$, uniformization gives that if $(\Z/2\Z)^k$ acts faithfully on the sphere $S^2$, then $k\leq 2$.

Recall that the non-orientable surface $N_g$ of (non-orientable) genus $g>0$ is defined to be the connect sum of $g$ copies of $\rp^2$.  Note that $\chi(N_g)=2-g$, and hence its orientation double cover is $S_{g-1}$. As such, any group acting faithfully on $N_g$ has an index 2 subgroup which acts faithfully on $S_{g-1}$.

\subsection{Proving \Cref{thm:large_rank} and \Cref{cor:gn}}\label{sec:pfthmlgrk}

The proof of the second part of \Cref{thm:large_rank} requires several steps, but the core strategy is the following: Suppose $H\leq G_n$ is realized in $\diff^+(M_n)$.  First, show that there is some element $\phi\in H$ whose fixed set includes either an isolated point or a surface of low genus.  Second, analyze the action of $H$ on the fixed set of $\phi$ in order to bound the size of $H$, noting that $H$ must act faithfully on $M_n$.\\

\noindent To start, note the following obstruction to free actions in $G_n$.

\begin{lem}\label{lem:no-free-involution}
    Let $n\geq 1$. No order 2 $\phi\in O(H_{M_n})$ can be realized by a smooth involution in $\diff^+(M_n)$ which acts freely on $M_n$.
\end{lem}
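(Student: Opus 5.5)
The plan is to argue by Euler characteristic and signature, using the tools just recalled. Suppose $f\in\diff^+(M_n)$ is a free involution with $f_*=\phi$ for some order 2 element $\phi\in O(H_{M_n})$. The quotient $N=M_n/\langle f\rangle$ is then a closed smooth oriented 4-manifold, and $M_n\to N$ is a double cover. Since $M_n$ is simply connected, $\pi_1(N)\cong\Z/2\Z$.

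The first and quickest step is multiplicativity of Euler characteristic. We have $\chi(M_n)=n+2$, so $\chi(N)=(n+2)/2$. This immediately rules out odd $n$.

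For general $n$, the cleanest route is \Cref{prop:ed} with $p=2$. Part (2) gives $\chi(M^G)=t-c+2$, and freeness means $M^G=\emptyset$, so we must have $t-c+2=0$. Decompose $H_2(M_n;\Z)$ under $\phi$ into trivial summands ($t$ of them), cyclotomic summands ($c$), and regular summands ($r$, each of rank 2), so that $t+c+2r=n$. We then need $c=t+2$.

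The second step brings in the signature. Since $f$ is free, the Hirzebruch $G$-signature theorem (\Cref{thm:gsig}) has empty right-hand side, so $2\sigma(M/G)=\sigma(M_n)=n$. Here $\sigma(M/G)$ is the signature of $Q$ restricted to $H_2(M;\R)^G$. Because $Q$ is positive definite, this restriction is also positive definite, and its dimension is $\dim H_2(M;\R)^G=t+r$. The $G$-signature theorem therefore gives $2(t+r)=n=t+c+2r$, that is, $t=c$. This contradicts $c=t+2$, which finishes the proof.

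Equivalently, the same contradiction can be phrased via traces. The Lefschetz number of a free map vanishes, so $2+\operatorname{tr}(\phi)=0$. The signature computation says that the $(+1)$- and $(-1)$-eigenspaces of $\phi$ have equal dimension, so $\operatorname{tr}(\phi)=0$. These two statements are incompatible.

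The only delicate point is justifying that $\sigma(M/G)$ equals the signature of $Q$ on the invariant subspace, including a possible factor convention. For a free action this is standard: $\sigma(N)$ computes the form on $H_2(N;\R)\cong H_2(M;\R)^G$ scaled by $\tfrac12$, and scaling does not change the signature. Even without the $G$-signature theorem, one can use multiplicativity of the signature under finite covers, $\sigma(M_n)=2\sigma(N)$, together with $\sigma(N)=t+r$ coming from positive definiteness. The main obstacle I anticipate is therefore only making these identifications carefully with real coefficients; the arithmetic itself is immediate.
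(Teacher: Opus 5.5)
Your proposal is correct and follows essentially the paper's argument: Edmonds' formula gives $c=t+2$ for a free involution, while multiplicativity of the signature (which you equivalently get from the $G$-signature theorem with empty fixed set) combined with positive definiteness of $Q$ on the $(t+r)$-dimensional invariant subspace gives $2(t+r)=n=t+c+2r$, hence $c=t$, a contradiction. You also usefully make explicit the step $\sigma(M_n/f)=t+r$, which the paper leaves implicit. Your trace version ($\operatorname{tr}\phi=-2$ from Lefschetz versus $\operatorname{tr}\phi=0$ from the signature) is the same argument in compact form.
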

\begin{proof}
    Suppose $\phi\in O(H_{M_n})$ does have a free-acting representative $f\in\diff^+(M_n)$.  (In particular, $f_*=\phi$.)  In the language of \Cref{prop:ed}, let $t,c$, and $r$ refer to the number of trivial, cyclotomic, and regular summands of $\phi$. Then $t+2-c=0$, and $t+c+2r=n$.  The quotient manifold has rational homology \[H_2(M_n/f;\Q)\cong H_2(M_n;\Q)^{\phi},\] by transfer.  Then the rank of $H_2(M_n/f;\Q)=t+r$.  Since signature is multiplicative in covers, $\sigma(M_n/f)=\sigma(M_n)/2=n/2$. Then \[n=t+c+2r=t+(t+2)+2r=2(t+r+1),\] which contradicts $t+r=n/2$.  So $\phi$ has no free-acting involution representative.
\end{proof}

\noindent We also record the following combinatorial lemma. Recall that $\ell(\phi)$ for $\phi\in G_n\leq O(H_{M_n})$ denotes the dimension of its $(-1)$-eigenspace $V_{-1}(\phi)\leq H_2(M_n;\Q)$.
\begin{lem}\label{lem:2/3}
    Let $n\geq 1$. When $\rk(H)>1$, there is some $\phi\in H$ such that $\ell(\phi)\leq 2n/3$.
\end{lem}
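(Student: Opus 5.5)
Pick any two distinct nonidentity elements $\phi,\psi\in H$; such a pair exists because $\rk(H)>1$. We then look at the three nonidentity elements $\phi,\psi,\phi\psi$ of the rank-2 subgroup $\langle\phi,\psi\rangle\le H$. The plan is to show that their $\ell$-values cannot all exceed $2n/3$. This is an averaging argument over the coordinates $e_1,\dots,e_n$.

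Identify each element of $G_n\cong\F_2^n$ with its indicator vector of reflected coordinates, so $\ell$ is just Hamming weight. For each coordinate $i$, consider the pair of bits $(\phi_i,\psi_i)\in\F_2^2$; the $i$-th bit of $\phi\psi$ is then $\phi_i+\psi_i$. We count how many of $\phi,\psi,\phi\psi$ have a $1$ in coordinate $i$. If $(\phi_i,\psi_i)=(0,0)$, the count is $0$. In each of the other three cases $(1,0)$, $(0,1)$, $(1,1)$, exactly two of the three bits $\phi_i,\psi_i,\phi_i+\psi_i$ equal $1$, so the count is $2$. Summing over $i$ gives
\[\ell(\phi)+\ell(\psi)+\ell(\phi\psi)=2\,\bigl|\{i:(\phi_i,\psi_i)\neq(0,0)\}\bigr|\le 2n.\]
Hence the minimum of the three values is at most $2n/3$, and the element attaining the minimum is the desired element of $H$.

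The only point needing care is that $\phi\psi\neq 1$, so that all three elements are genuine nonidentity elements of $H$. This holds because $\phi\neq\psi$. In fact the lemma's bound does not even require this: the identity has $\ell=0\le 2n/3$. What matters for the later application is that the chosen element can be taken nontrivial. The averaging guarantees this automatically, since all three candidates are nontrivial. I expect no real obstacle here.
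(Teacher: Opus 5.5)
Your proof is correct and is essentially the paper's argument. The paper argues by contradiction: if $\ell(\phi),\ell(\psi)>2n/3$, then $\ell(\phi\psi)=\ell(\phi)+\ell(\psi)-2t$ together with the pigeonhole bound $t\ge \ell(\phi)+\ell(\psi)-n$ gives $\ell(\phi\psi)\le 2n-\ell(\phi)-\ell(\psi)<2n/3$, which is just a rearrangement of your identity $\ell(\phi)+\ell(\psi)+\ell(\phi\psi)=2|S_\phi\cup S_\psi|\le 2n$; your averaging version is slightly cleaner in making explicit that the element produced is nontrivial, which is what the later application needs.
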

\begin{proof}
     Note that $\ell(\phi)=n-\dim(H_2(M_n;\Q)^{\ang{\phi}})$. Suppose that there are two nontrivial elements $1\neq \phi,\psi\in H$ such that $\ell(\phi),\ell(\psi)>2n/3$.  Then $\ell(\phi\psi)=\ell(\phi)+\ell(\psi)-2t$, where $t=\dim (V_{-1}(\phi)\cap V_{-1}(\psi))$.  Also, $t\geq \ell(\phi)+\ell(\psi)-n$ by a pigeon-hole argument, and so \begin{align*}
        \ell(\phi\psi)&\leq\ell(\phi)+\ell(\psi)-2(\ell(\phi)+\ell(\psi)-n)\\
        &=2n-(\ell(\phi)+\ell(\psi))\\
        &<2n-4n/3\\
        &=2n/3.
    \end{align*}
\end{proof}

\noindent The next lemma serves to convert fixed-set data into a bound on the size of a realizable subgroup $H\leq G_n$.
\begin{lem}\label{lem:rankorbitsize}
    Let $n\geq 1$, and let $f\in \diff^+(M_n)$ be a diffeomorphism realizing some $\phi\in G_n\leq O(H_{M_n})$.  Fix some surface type appearing in $Fix(f)$, and suppose $Fix(f)$ contains $k$ disjoint surfaces of that type.  Then there exists a surface $S\subseteq Fix(f)$ of the given type such that $\abs{Stab_S(\tilde H)}\geq2^{\max\{0,\rk(H)-v_2(k)\}}$, where $v_2(k)$ denotes 2-adic valuation.
\end{lem}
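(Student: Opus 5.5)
The plan is to let the lifted group act on the set of fixed surfaces of $f$ and then use that it is a $2$-group. Here $\tilde H\leq\diff^+(M_n)$ denotes the finite group realizing $H$: it contains $f$, it maps isomorphically onto $H$ under $\pi$, and so $\tilde H\cong(\Z/2\Z)^{\rk(H)}$.

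\textbf{Step 1: $\tilde H$ permutes the surfaces.} Since $\tilde H$ is abelian and contains $f$, every $h\in\tilde H$ commutes with $f$. Hence $h$ maps $Fix(f)$ to itself diffeomorphically. It therefore permutes the path components of $Fix(f)$, and it preserves both their dimension and their diffeomorphism type. By \S\ref{sec:tools}, $Fix(f)$ is a compact disjoint union of isolated points and embedded surfaces, so it has finitely many components. Let $\mathcal S$ be the set of the $k$ surfaces of the chosen type, so $k\geq 1$ is finite and $\tilde H$ acts on $\mathcal S$.

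\textbf{Step 2: a small orbit exists.} Orbits of a group of order $2^{\rk(H)}$ have sizes that are powers of $2$. These orbit sizes sum to $k=\abs{\mathcal S}$. Write $k=2^{v_2(k)}m$ with $m$ odd. Suppose every orbit had size at least $2^{v_2(k)+1}$. Then every orbit size would be divisible by $2^{v_2(k)+1}$, and hence so would $k$, a contradiction. So there is an orbit $\tilde H\cdot S$ with $\abs{\tilde H\cdot S}\leq 2^{v_2(k)}$.

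\textbf{Step 3: orbit--stabilizer.} For this $S$, orbit--stabilizer gives
\[
\abs{Stab_S(\tilde H)}=\frac{2^{\rk(H)}}{\abs{\tilde H\cdot S}}\geq 2^{\rk(H)-v_2(k)}.
\]
Trivially $\abs{Stab_S(\tilde H)}\geq 1=2^0$, so combining the two bounds gives $\abs{Stab_S(\tilde H)}\geq 2^{\max\{0,\rk(H)-v_2(k)\}}$.

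The only point needing care is Step 1. One must note that "type" is an invariant of diffeomorphism, so it is preserved by the action, and that $\tilde H$ is abelian precisely because it is isomorphic to $H\leq G_n$. The remaining steps are elementary counting.
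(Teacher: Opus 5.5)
Your proof is correct and follows the paper's argument. The paper also lets $\tilde H$ act on the $k$ surfaces of the given type, notes that the smallest orbit size is a power of $2$ dividing every other orbit size and hence $k$, and finishes with orbit--stabilizer; your contrapositive version of the divisibility step, and your explicit use of commutativity to show that $\tilde H$ preserves $Fix(f)$ and surface type, are the same steps written out in more detail.
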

\begin{proof}[Proof of \Cref{lem:rankorbitsize}]
    Suppose $H$ is realized by some $\tilde H\leq \diff^+(M_n)$. Consider the set $A_S$ of all surfaces of the given type in $Fix(f)$.  Note that $\tilde H$ must act on $A_S$ with $f$ acting by the identity.  We show that the action of $\tilde H$ on $A_S$ has an orbit of size at most $2^{\min\{\rk(H),v_2(k)\}}$, and then the claim follows by the orbit-stabilizer theorem.

    Let $\{C_i\}$ be the orbit sizes, and let $C=\min\{C_i\}$ be the size of the smallest.  Since $H$ is a 2-group, $C$ must be a power of 2 which is smaller than (or equal to) the size of all other orbits.  Then $C|C_i$ for all $i$. Then $C|\sum_iC_i=k$. Since $C$ is a power of 2 dividing $k$, and $C$ divides $\abs{H}$ by the orbit-stabilizer theorem, then $C=2^r$ for some $r\leq \rk(H)$ and $r\leq v_2(k)$.  The result follows.
\end{proof}

\noindent The next proposition provides the main input needed to prove \Cref{thm:large_rank}.  It first shows that if $H\leq G_n$ is realizable in $\diff^+(M_n)$, some element $\phi\in H$ must be realized by a diffeomorphism whose fixed set contains a connected component of uniformly bounded complexity.  Then the theory of group actions on surfaces bounds the size of this realizable $H$ in terms of the fixed-set data of this element $\phi$.  Some of the proof of the following lemma was inspired by \cite{bar}, Section 6.
\begin{prop}\label{prop:fullthm}
    Let $n\geq 1$, and let $H\leq G_n$ with $\rk(H)=m$.  Assume $H$ is realizable in $\diff^+(M_n)$ as some group $\tilde H<\diff^+(M_n).$
    \begin{enumerate}
        \item There is some $\phi \in H$ such that the fixed set $Fix(f)$ of a diffeomorphism $f$ realizing $\phi$ contains either an isolated point, $S^2, T^2, S_2, \rp^2, N_2, N_3,$ or $N_4$, where $N_g$ denotes the non-orientable surface of genus $g$.
        \item If the fixed set $Fix(f)$ contains $k$ isolated points, then $m<4+v_2(k)$, where $v_2$ denotes $2$-adic valuation.  If the fixed set $Fix(f)$ contains $k$ components all diffeomorphic to the same one of $S^2, T^2, S_2$, or $N_g$ for some $g\leq 4$, then $m<c+v_2(k)$, where $c$ is the largest integer such that $(\Z/2\Z)^{c-1}$ acts faithfully on the given surface type.  (Note that $c$ is some finite constant depending on the surface type.  In particular, $c<8$ in all cases.)
    \end{enumerate}
\end{prop}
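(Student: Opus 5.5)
The plan is to choose a suitable $\phi\in H$, read off the fixed-set topology of a realizing involution $f$ from \Cref{prop:ed} and \Cref{thm:gsig}, and then bound $\rk(H)$ by letting stabilizers act on a low-complexity fixed component.

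\textbf{Part 1: choosing $\phi$ and locating a small component.} Assume $m\geq 2$; the case $m=1$ is degenerate for the applications. By \Cref{lem:2/3} choose $1\neq\phi\in H$ with $\ell\coloneqq\ell(\phi)\leq 2n/3$, and let $f\in\tilde H$ realize it.

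Since $\phi$ is diagonal, $H_2(M_n;\Z)$ splits under $\ang{f}$ into $t=n-\ell$ trivial summands, $c=\ell$ cyclotomic summands and no regular summands. By \Cref{lem:no-free-involution}, $Fix(f)\neq\emptyset$, so \Cref{prop:ed} gives, for mod-$2$ Betti numbers,
\[\beta_1(Fix(f))=\ell,\qquad \beta_0+\beta_2=n-\ell+2.\]
Write $i$ for the number of isolated fixed points and $s$ for the number of surface components. Each surface contributes $2$ to $\beta_0+\beta_2$ and each point contributes $1$, so $i+2s=n-\ell+2$.

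If $i>0$ we are done. Otherwise $s=(n-\ell+2)/2$, and the surfaces share total $\beta_1$ equal to $\ell$. The average mod-$2$ first Betti number per surface is therefore
\[\frac{2\ell}{n-\ell+2}\leq\frac{4n/3}{n/3+2}<4.\]
Hence some surface component has $\beta_1\leq 3$ (a fortiori $\leq 4$). The closed surfaces with mod-$2$ $\beta_1\leq 4$ are exactly $S^2,T^2,S_2,\rp^2,N_2,N_3,N_4$, which proves (1). I would also record, from \Cref{thm:gsig} with $p=2$ (isolated points contribute $0$, each surface contributes $C\cdot C$), that $\sum_C C\cdot C=n-2\ell$. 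This is not needed for the existence statement.

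\textbf{Part 2: bounding the rank via stabilizers.} Suppose $Fix(f)$ has $k$ components of the chosen type. Since $\tilde H$ is abelian and contains $f$, it permutes the components of $Fix(f)$ preserving type. By \Cref{lem:rankorbitsize} there is a component $S$ whose stabilizer $\mathrm{Stab}_S(\tilde H)$ has order at least $2^{m-v_2(k)}$.

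\emph{Isolated points.} If $S=\{x\}$, average a Riemannian metric over $\tilde H$. Then the stabilizer acts faithfully by isometries on $T_xM\cong\R^4$, since an isometry fixing a point and a frame is the identity. Its image is therefore an elementary abelian $2$-subgroup of $SO(4)$. Such a subgroup is simultaneously diagonalizable with $\pm1$ entries and determinant $1$, so it has rank at most $3$. This gives $m-v_2(k)\leq 3$, i.e.\ $m<4+v_2(k)$.

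\emph{Surfaces.} If $S$ is a surface, let $K$ be the kernel of the action of the stabilizer on $S$. At a point $x\in S$, each element of $K$ is the identity on $T_xS$ and an orientation-preserving linear involution of the normal plane, hence one of $\pm1$ there. By the same linearization argument, $\abs{K}\leq 2$, and indeed $K=\ang{f}$. The quotient $\mathrm{Stab}_S(\tilde H)/K$ then acts faithfully on $S$ as a group $(\Z/2\Z)^{r}$ with $r\geq m-v_2(k)-1$. By the definition of $c$ this forces $m-v_2(k)-1\leq c-1$.

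To get strictness, I would refine the faithful-action bound on the specific surface, for instance by using that this action commutes with the normal data along $S$. That $c<8$ follows from the bounds stated just before this proposition: rank $\leq 2$ on $S^2$, rank $\leq 4$ on $T^2$, and for $S_2$ a bound from \Cref{thm:84(g-1)} ($\abs{G}\leq 84$, so rank $\leq 6$). The non-orientable $N_g$ with $g\leq 4$ are handled by passing to the index-$2$ subgroup acting on the orientation double cover $S_{g-1}$.

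\textbf{Expected main obstacle.} The hardest step is the precise bookkeeping in the surface case. One must show that the kernel of the stabilizer's action on $S$ is exactly $\ang{f}$, and pin down the resulting inequality (strict versus non-strict, i.e.\ the exact constant $c$). In particular, for non-orientable components one has to check that the double-cover argument only loses one more factor of $2$.
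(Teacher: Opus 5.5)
Your argument follows the paper's own proof. Part 1 uses \Cref{lem:2/3}, Edmonds's Betti-number count and a pigeonhole. Part 2 uses \Cref{lem:rankorbitsize} plus the fact that the kernel of $\mathrm{Stab}_S(\tilde H)$ acting on $S$ is exactly $\ang{f}$. Your explicit appeals to \Cref{lem:no-free-involution} (for $Fix(f)\neq\emptyset$) and to the rank-$3$ bound in $SO(4)$ at isolated points fill in what the paper leaves implicit.

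\textbf{Strictness in the surface case.} The one step you leave open is not supplied by the paper either. With $c$ as defined, faithfulness of $\mathrm{Stab}_S(\tilde H)/\ang{f}\cong(\Z/2\Z)^r$ on $S$ gives $r\leq c-1$. \Cref{lem:rankorbitsize} gives $r\geq m-v_2(k)-1$. So the argument proves only $m\leq c+v_2(k)$. The paper makes exactly this count and then writes $m<c+v_2(k)$, so this is an off-by-one in the statement, not an idea you missed. In the isolated-point case the stabilizer itself, not a quotient, has rank at most $3$, which is why strictness is correct there.

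\textbf{What to do instead.} Drop the proposed refinement via normal data: it is not an argument, and there is no evident local source for one. At a point $x\in S$, the point-stabilizer lies in $S(O(2)\times O(2))$, whose elementary abelian $2$-subgroups have rank $3$. That is just $\ang{f}$ plus the rank-$2$ bound on $T_xS$ that faithful surface actions already satisfy. State the conclusion as $m\leq c+v_2(k)$; this is all part 2(c) of \Cref{thm:large_rank} needs.

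\textbf{Bounding $c$.} You must allow elements that reverse the orientation of $S$; they occur, compensated on the normal plane. So two of the bounds you quote do not apply:
\begin{itemize}
\item $(\Z/2\Z)^3\leq O(3)$ acts faithfully on $S^2$;
\item for $S_2$ the crude bound is $\abs{\isom(S_2)}\leq 168$, not $84$.
\end{itemize}
For $N_g$ nothing is lost in passing to $S_{g-1}$: the orientation-preserving lifts form an isomorphic copy of the whole group. The crude counts then give $c\leq 8$. With the non-strict inequality this still yields $m\leq 8+\log_2 n$. The stated $c<8$ needs a finer argument, such as an orbifold Euler characteristic count.

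\textbf{The case $m=1$.} Your restriction to $m\geq 2$ is genuinely necessary for part 1. For $H=\ang{-I}$ one has $t=0$ and $c=n$. So any involution realizing $-I$, for instance an equivariant connected sum of complex conjugations, fixes a single surface with $\beta_1=n$, and part 1 fails once $n\geq 5$.
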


\begin{proof}
    We first prove part 1 of the statement.  By \Cref{lem:2/3}, let $\phi\in H$ with $\ell(\phi)\leq 2n/3$ in the language of the proof of \Cref{lem:2/3}, and suppose $f$ is a smooth involution realizing $\phi$.  If $Fix(f)$ contains isolated points, then part 1 is satisfied immediately, so assume it does not.  Rephrase the proposition statement as follows: $Fix(f)$ contains a compact surface $S$ with $\beta_1(S)\leq4$, where $\beta_1$ denotes the first Betti number with $\F_2$ coefficients.  By the classification of surfaces, this is equivalent to the original statement.
    
    Since $\ell(\phi)\leq 2n/3$, then for $\phi$, $c\leq 2t$, where $c,t$ are in the language of Edmonds \Cref{prop:ed}.  In particular, each surface contributes 2 to $\beta_0(Fix(f))+\beta_2(Fix(f))$.  Since $\beta_0(Fix(f))+\beta_2(Fix(f))=t+2$, then there must be $(t+2)/2$ surface components.  (In particular, if $t$ is odd, we must be in the isolated fixed point case.)  Then by a pigeon-hole argument, one of these surface components $S$ must have $\beta_1(S)\leq 4$, since $c=\beta_1(Fix(f))$.

    We now prove the second part of the proposition. The key observation is that for each fixed surface $S$ in $Fix(f)$, the stabilizer $Stab_S(\tilde H)$ which fixes $S$ set-wise, must act almost faithfully on $S$, with the kernel of the action on $S$ being only $\ang{f}$ itself.  To see this, suppose there is $f\neq g\in Stab_S(\tilde H)$ which fixes $S$ pointwise.  Since the order $\abs{g}=2$, then $g$ must act by $-1$ on the normal bundle of $S$.  Of course $f$ also fixes $S$ pointwise.  Then $fg$ acts trivially on $S$ and its normal bundle.  Since any isometry (and hence any finite order diffeomorphism by averaging a metric) is determined by its action on a point and a frame, $fg$ must be the identity.  But since $f\inv=f\neq g$, there is a contradiction.  Hence $Stab_S(\tilde H)/\ang{f}$ acts faithfully on $S$.  Then \Cref{lem:rankorbitsize} implies the sought statement $m<c+v_2(k)$.
    
    All that remains is the universal bound on the constant $c$.  Any diffeomorphism of a non-orientable surface $N_g$ lifts to a diffeomorphism of its orientation double-cover $S_{g-1}$.  On $S^2$ and $T^2$ (and by extension $\rp^2$ and $N_2$ by passing to the orientation cover), uniformization gives the desired bound.  Then apply the $84(g-1)$ theorem for hyperbolic surfaces on the other surface types.
\end{proof}

Now the proof of \Cref{thm:large_rank} follows quickly from \Cref{prop:fullthm}.

\begin{proof}[Proof of \Cref{thm:large_rank}, part 2]
    We need to show that each of the following three conditions in the theorem statement prohibits realization in $\diff^+(M_n)$: 
    \begin{enumerate}
        \item $Rk(H) \geq 4$ and $n$ is odd
        \item $Rk(H) \geq 4$ and there is some $\phi\in H$ with $(n-\ell(\phi))\equiv 1\mod 2$, where $\ell(\phi)$ denotes the number of $(-1)$s in its matrix representation.
        \item $Rk(H) > 8+\log_2(n)$.
    \end{enumerate}
    
    Obstruction given the third condition follows directly from \Cref{prop:fullthm}, by taking a universal bound over all surface types, and noting that in the language of that proposition, $v_2(k)\leq\log_2(k)\leq\log_2(n)$.

    Obstruction in the second case follows by noting that this condition ensures that $\beta_0(Fix(f))+\beta_2(Fix(f))=n-\ell(\phi)+2$ is odd, and hence there must be an odd number of isolated fixed points.  Then by orbit-stabilizer, there must be some global fixed point of $\tilde H$ since $\tilde H$ is a 2-group.  Finally, note that $(\Z/2\Z)^m\into SO(4)$ precisely for $m\leq 3$.

    For the first condition, reduce to the second.  When $n$ is odd, then the second condition asks for $\phi\in H$ with even length.  But notice that if $\phi,\psi\in H$ both have odd-length, then $\phi\psi$ has even length.  So for odd $n$, every subgroup of $G_n$ of rank at least 2 has an even-length element.
\end{proof}

\begin{proof}[Proof of \Cref{cor:gn}]
    By the first condition in part 2 of \Cref{thm:large_rank}, all three statements are trivial for odd $n$.  Assume then that $n$ is even.
    
    Let $a_n$ denote the probability that a random rank-$k$ subgroup of $G_n$ is realizable in $\diff^+(M_n)$.  We show that $\limsup_n a_n\leq 2^{-k}$, where $a_n$ denotes the realizable proportion of rank-$k$ subgroups of $G_n$.  By the second condition of 2 of \Cref{thm:large_rank}, any realizable subgroup of rank $k$ must have all elements of even length; call such a subgroup even. Note that the number of even subspaces of $\F_2^n$ of dimension $k$ is equal to the total number of subspaces of $\F_2^{n-1}$ of dimension $k$, since the even elements of $\F_2^n$ are precisely the kernel of the linear map which sends an element to its mod-2 Hamming weight.  Then the even proportion $P_e(n)$ of rank-$k$ subgroups is \[P_e(n,k)=\frac{\binom{n-1}{k}_2}{\binom{n}{k}_2}=\frac{2^{n-k}-1}{2^n-1}\sim2^{-k}.\]  Then since $a_n\leq P_e(n)$, then $\limsup_n a_n\leq 2^{-k}$.

    Let $b_n$ denote the probability that a random subgroup of $G_n$ is realizable in $\diff^+(M_n)$.  To show $\lim_n b_n=0$, apply the third condition of part 2 of \Cref{thm:large_rank} since subspaces of $\F_2^n$ are almost never of rank at most $8+\log_2n$.

    To show the third part, let $P_e(n)$ denote the even proportion of subgroups of $G_n$ whose rank is at most $N\coloneqq \lfloor \log_2 n\rfloor$. By the logic of part a, it suffices to show that \[P_e(n)=\frac{\sum_{k=0}^{N}\binom{n-1}{k}_2}{\sum_{k=0}^{N}\binom{n}{k}_2}\xrightarrow[n\to \infty]{} 0.\]  And this is true because to first order each sum is simply given by its dominant term, and \[\frac{\binom{n-1}{N}_2}{\binom{n}{N}_2}=\frac{2^{n-N}-1}{2^{n}-1}\sim 2^{-N}=n\inv.\]  Then $\lim_{n\to\infty} c_n\leq \lim_{n\to\infty}P_e(n)=0$, where $c_n$ denotes the probability that a random subgroup $H\leq G_n$ with rank at most $\log_2n$ is realizable in $\diff^+(M_n)$.
\end{proof}

\section{Realization in $S_n<O(H_{M_n})$}\label{sec:sn}

This section analyzes realizability inside the permutation subgroup $S_n<O(H_{M_n})$; to do so, we rely strongly on a result of Hambleton--Tanase \cite{ht}.  We explain the result fully below, but informally, it says:

\begin{adjustwidth}{2em}{0pt}
Suppose $\sigma\in S_n<O(H_{M_n})$ has odd order, and suppose that $\sigma$ is realizable in $\diff^+(M_n)$.  Then $\sigma$ can be realized using an equivariant connected sum of $1\leq j\leq n$ copies of $\cp^2$, on each of which the action is linear (i.e., a diagonal matrix action on standard projective coordinates of each copy).
\end{adjustwidth}

\noindent Hambleton--Tanase package the realizability question into a combinatorial question. On the one hand, their theorem specifies exactly which odd order permutations are realizable: one enumerates all possible relevant combinatorial objects for all permutations, and then checks if a specific permutation has combinatorial data in that list.  On the other hand, their formulation does not give an easily computable criterion to determine realizability of a large permutation. The first goal of this section is to prove the following theorem, which implies a piece of part 1 of \Cref{thm:mega-thm}.

\begin{theorem}\label{thm:asym}{\bf (Asymptotic non-realizability in $S_n< O(H_{M_n})$: cyclic groups)}\hfill
    \begin{enumerate}
        \item Random elements of odd order in $S_n<O(H_{M_n})$ are asymptotically almost never realizable in $\diff^+(M_n)$.
        
        \item Let $t:\mathbb N\to\mathbb Z_{\ge0}$ satisfy
        \[t(n)=o\left(\frac{\sqrt n}{\log n}\right).\]
        Up to conjugacy, random odd order elements of $S_n<O(H_{M_n})$
        with at most $t(n)$ fixed points as permutations are asymptotically almost never realizable in
        $\mathrm{Diff}^+(M_n)$.
    \end{enumerate}
\end{theorem}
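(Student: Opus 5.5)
Our plan is to turn the Hambleton--Tanase criterion into a necessary condition on the cycle type, and then show that random odd-order permutations almost never satisfy it. By \cite{ht}, if an odd-order $\sigma\in S_n$ is realizable, then it is realized by an equivariant connected sum of copies of $\cp^2$ carrying linear actions. In such a sum, the action of $\ang{\sigma}$ permutes the summands. Each orbit of summands of size $d$ contributes $d$-cycles to $\sigma$, and it is attached along orbits of fixed points of the stabilizer. The stabilizer of a summand acts linearly on $\cp^2$ and fixes that summand's hyperplane class. So the summands in an orbit must be glued to the rest of the sum at points lying in the fixed set of the stabilizer subgroup. A linear cyclic action on $\cp^2$ has at most three isolated fixed points, or a fixed line together with a point. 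Hence each vertex of the equivariant ``$\cp^2$-tree'' (the quotient tree of orbits) has bounded valence in an equivariant sense. The first step is to extract the following necessary condition: the cycle type of $\sigma$ arises from a tree in which every orbit of length $d$ is attached to an orbit of length $d'$ with $d\mid d'$ or $d'\mid d$, and each vertex carries at most three attachment orbits (up to a constant). In particular, the cycle lengths must form a divisibility-connected structure: the set of distinct cycle lengths, with multiplicities, must be coverable by a tree of bounded degree whose edges join lengths related by divisibility.

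The second step is to distill this into a simple statistic that a random odd-order permutation violates. We propose the following: a realizable $\sigma$ whose largest cycle has length $L$ cannot have two cycles of coprime lengths both exceeding $1$ unless they are joined through a cycle whose length is a common multiple. Equivalently, every pair of distinct cycle lengths must be connected through a divisibility chain of lengths that actually occur in $\sigma$, with bounded branching at each length. The cleanest target is this: if $\sigma$ has at least $C$ distinct cycle lengths that are pairwise incomparable under divisibility and are not linked through lengths occurring in $\sigma$, then $\sigma$ is not realizable. A bounded-degree tree cannot host too many such incomparable leaves hanging off a common orbit, because fixed points of a subgroup acting linearly are limited.

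The third step is the probability estimate.

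\textbf{Part 1 (uniform random odd-order permutations).} Odd-order permutations are exactly those with all cycle lengths odd. Their exponential generating function is $\exp\bigl(\sum_{k\text{ odd}} x^k/k\bigr)=\sqrt{(1+x)/(1-x)}$. Singularity analysis shows the cycle structure behaves like that of a uniform permutation restricted to odd lengths. The number of cycles is about $\tfrac12\log n$, and the largest cycle has length of order $n$. In particular, with probability tending to $1$ there are two cycles of large odd lengths $a,b\geq n^{\epsilon}$ with neither dividing the other and no common-multiple length present. This holds because lengths above $n/2$ cannot have a proper multiple present, and the two largest cycles are asymptotically coprime-like by a Poisson–Dirichlet argument. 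We estimate the probability that the two largest cycle lengths are divisibility-related, or that a larger common multiple occurs, as $o(1)$, and conclude.

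\textbf{Part 2 (conjugacy classes).} Uniform conjugacy classes correspond to uniform partitions of $n$ into odd parts with at most $t(n)$ parts equal to $1$. Here we use Meinardus-type asymptotics for partitions into odd parts. A typical such partition has about $\sqrt n\log n$ parts, and its distinct part sizes are of order $\sqrt n$. The fixed-point restriction $t(n)=o(\sqrt n/\log n)$ prevents the partition from hiding its structure in fixed points: fixed points would correspond to summands with trivial stabilizer that could serve as hubs for attaching many orbits. With this restriction, a positive density of the parts are distinct odd numbers in $[\sqrt n,2\sqrt n]$, pairwise divisibility-incomparable, while the bounded-degree tree can accommodate only $O(t(n)+\log n)$ incomparable branches. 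This yields non-realizability with probability $1-o(1)$.

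The main obstacle is the first step: formulating a precise, provably necessary combinatorial consequence of \cite{ht} that is strong enough, in particular bounding how many orbits of incomparable lengths can be attached, with fixed points acting as unlimited hubs. The role of $t(n)$ in Part 2 suggests that fixed points do give extra attachment freedom at a rate of roughly $\log n$ per fixed point, and we would first try to prove a bound of that shape.
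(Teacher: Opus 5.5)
\textbf{There is a genuine gap, and it is in the step you flagged.} The combinatorial consequence of \cite{ht} you propose is not correct, and the statistics you then estimate are not obstructions.

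First, vertices of the equivariant tree do \emph{not} have bounded valence. An orbit of summands may be glued at any orbit of points whose isotropy group equals the summand's stabilizer. A linearly acted $\mathbb{CP}^2$, or the rotation action on $S^4$ when there are no fixed points, has infinitely many such points along its invariant lines and at generic points. What is bounded is the number of isotropy \emph{types}: at most three nontrivial proper isotropy groups on a linear $\mathbb{CP}^2$, and at most two on $S^4$.

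Through Lemmas 1.12--1.13 of \cite{ht}, this yields the criterion the paper actually uses. The number of maximal proper stabilizers $\pi_d$, equivalently the number of divisibility-minimal cycle lengths $d>1$, is at most $\max\{2,3C_{1,n}(\sigma)\}$.

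Your Step 2 obstruction already fails for two cycles. Take cycle type $(a,b)$ with $a,b$ odd, $\gcd(a,b)=1$, $a+b\le n$, and remaining points fixed. It has two coprime cycles and no common-multiple cycle. Yet it is realizable: rotate $S^4$ with weights of orders $a$ and $b$ in orthogonal planes and attach copies along the two orbits, as in the $C_k\times C_\ell$ construction. So the event in your Part 1 estimate, that the two largest cycles are divisibility-unrelated, implies nothing. Large cycles are usually not minimal, and two minimal lengths are always permitted.

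\textbf{The fix is to count only lengths that are automatically minimal, namely primes,} and to show that their number outgrows $3C_{1,n}$.

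In Part 1 the paper counts odd primes $p\le(\log n)^2$ with $C_{p,n}(\sigma)>0$. Factorial moments together with singularity analysis of $\sqrt{(1+x)/(1-x)}$ show that a $p$-cycle is present with probability $\approx 1-e^{-1/p}$, nearly independently across $p$. By Mertens and Chebyshev this count tends to infinity, with mean of order $\log\log\log n$. Meanwhile $C_{1,n}$ is bounded in probability.

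In Part 2 the same problem recurs. Distinct odd parts in $[\sqrt n,2\sqrt n]$ are pairwise incomparable under divisibility. However, a typical odd partition contains all the small odd parts $3,5,7,\dots$, so the composite parts in that range are not minimal. Nothing in \cite{ht}, nor in your sketch, bounds the number of incomparable but non-minimal lengths by $O(t(n)+\log n)$.

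Restrict instead to primes in $[a\sqrt n,b\sqrt n]$. The prime number theorem and the ratio asymptotics $q_{\ge3}(N-s)/q_{\ge3}(N)\approx e^{-As/(2\sqrt N)}$ for partitions into odd parts $\ge 3$ give $\asymp\sqrt n/\log n$ such parts with high probability. A second-moment estimate then shows this beats $3t(n)$ exactly when $t(n)=o(\sqrt n/\log n)$.

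That is where the $\log n$ in the hypothesis comes from: the density of primes. It does not come from extra attaching freedom per fixed point. Each fixed point, i.e.\ each central copy of $\mathbb{CP}^2$, contributes at most three maximal stabilizers.
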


\noindent We prove \Cref{thm:asym} in \Cref{sec:sn} via results on partition and permutation statistics.  The proofs follow by relatively standard methods in those fields, but it is notable that analytic number theory or probabilistic arguments would be relevant to a Nielsen realization problem. The bound in \Cref{thm:asym} (2) only uses some small portion of the available information, albeit some of the most accessible information.  In fact, we suspect that very few permutations are realizable for large $n$, even up to conjugacy.

\begin{conj}\label{conj}
    Up to conjugacy, random elements of odd order in $S_n<O(H_{M_n})$ are asymptotically almost never realizable in $\diff^+(M_n)$.
\end{conj}

\noindent To pair with \Cref{thm:asym} about odd order cyclic groups of $S_n<O(H_{M_n})$, we prove the following analog about $2$-groups in $S_n<O(H_{M_n})$.

\begin{theorem}\label{thm:2groups}{\bf (Asymptotic non-realizability for $S_n<O(H_{M_n})$: $2$-groups)}\hfill
    \begin{enumerate}
        \item  Random $2$-subgroups of $S_n< O(H_{M_n})$ are asymptotically almost never realizable in $\diff^+(M_n)$.
        \item Up to conjugacy, random $2$-subgroups of $S_n< O(H_{M_n})$ are asymptotically almost never realizable in $\diff^+(M_n)$.
    \end{enumerate}
    In both settings, the asymptotic realizable probability is $O(e^{-cn^2})$ for some constant $c>0$.
\end{theorem}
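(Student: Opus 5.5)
Whether a $2$-subgroup $P\le S_n$ is realizable depends only on its conjugacy class, by Baraglia's splitting. The plan is to find an obstruction that holds for almost every such $P$, and then show that the $P$ escaping it form a set of size $O(e^{-cn^2})$ compared with the total, counted either as subgroups or as conjugacy classes. The total count is available: Pyber's bound and the Sims/Higman counting for $p$-groups give roughly $2^{c'n^2}$ $2$-subgroups of $S_n$. For conjugacy classes, dividing by $|S_n|=e^{O(n\log n)}$ does not change the $e^{\Theta(n^2)}$ scale. It therefore suffices to show that the realizable $2$-subgroups, or their conjugacy classes, number at most $e^{o(n^2)}$, or at least at most $2^{(c'-c)n^2}$.

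For the obstruction I would use the same fixed-point mechanism as in \Cref{prop:fullthm}. Let $\tilde P\le\diff^+(M_n)$ realize $P$. Pick an involution $\sigma\in P$; a central one is convenient, since then all of $\tilde P$ acts on $Fix(f)$. As a permutation matrix, $\sigma$ has $t$ trivial summands (its fixed points) and $r$ regular summands (its $2$-cycles), and no cyclotomic summands. Because $c=0$, \Cref{prop:ed} shows that $Fix(f)\neq\emptyset$ consists of isolated points and spheres, with $\beta_0+\beta_2=t+2$. \Cref{lem:no-free-involution} guarantees nonemptiness independently. So $\tilde P$ acts on a finite collection of points and $2$-spheres. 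After passing to a suitable orbit, as in \Cref{lem:rankorbitsize}, a large subgroup stabilizes some component. If that component is a point, the stabilizer embeds in $SO(4)$; if it is a sphere, the stabilizer modulo $\langle f\rangle$ embeds in $O(3)$ acting on $S^2$, together with a normal $O(2)$. In either case the stabilizer is a $2$-subgroup of $O(4)$, hence of bounded rank, of derived length at most $2$, and with boundedly many generators. This bounds $|P:\mathrm{Stab}|$ by the number of fixed components, which is at most $t+2\le n+2$. I would then conclude that a realizable $P$ has a subgroup of index $O(n)$ lying in the tiny family of $2$-subgroups of $O(4)$, up to their permutation actions. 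A subgroup with a bounded-index, boundedly generated subgroup is determined by $O(\log n)$ elements of $S_n$ together with subgroup data, so there are at most $(n!)^{O(\log n)}=e^{O(n\log^2 n)}$ such $P$. That is $e^{o(n^2)}$, which gives both parts at once.

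The hard step is making the index bound rigorous, for two reasons. First, $P$ may have no involution with a fixed component whose stabilizer is large. I would handle this by choosing $\sigma$ central and using that all of $P$ permutes the at most $n+2$ components. The stabilizer of a component then has index at most $n+2$, but index is not limited to powers of $2$ dividing $k$ as it was in \Cref{lem:rankorbitsize}, so I would simply accept the polynomial index bound. Second, I must check that the stabilizer acts faithfully modulo $\langle f\rangle$ on the component and its normal data. The argument from \Cref{prop:fullthm}, that an element is determined by its action at a point and a frame, gives this. Finally, I would confirm the counting lemma: a group with a subgroup of index at most $n+2$ generated by $O(1)$ elements is generated by $O(\log n)$ elements. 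This holds because a $2$-group of index $m$ needs at most $\log_2 m$ extra generators.
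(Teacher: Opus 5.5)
Your proposal is correct and is essentially the paper's argument. You take a central involution whose fixed set, by Edmonds' result with $c=0$, is a nonempty union of points and spheres. A component stabilizer then has index $O(n)$ and is boundedly generated, so $P$ needs only $O(\log n)$ generators; this gives at most $(n!)^{O(\log n)}$ realizable subgroups against $2^{\Theta(n^2)}$ in total, a gap that survives dividing by $n!$ for conjugacy classes.

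Two small repairs are needed. First, in the sphere case the pointwise stabilizer of $S$ need not be $\langle f\rangle$; it is only cyclic, embedding in $SO(2)$ via a normal fiber. So you only know the stabilizer is an extension of a finite subgroup of $O(3)$ by a cyclic group, not that it lies in $O(4)$. This still bounds its number of generators by $4$. Second, the lower bound $2^{n^2/16+O(n)}$ on $2$-subgroups should be justified directly, as the paper does, by counting subspaces of the elementary abelian group generated by $\lfloor n/2\rfloor$ disjoint transpositions. Higman--Sims counts isomorphism types of $p$-groups and does not give this.
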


\noindent \Cref{prop:polygon} below presents some simple constructions based on regular polytopes for realizing certain other subgroups of $S_n<O(H_{M_n})$.  \Cref{cor:Sn-to-8} demonstrates certain non-realizable elements of $S_n\leq O(H_{M_n})$, in particular at least one for each $n\geq 15$.  Finally, \Cref{prop:sn-obstr} below obstructs a certain class of large subgroups of $S_n<O(H_{M_n})$, defined in its statement. Together, these statements imply the following.

\begin{prop}\label{cor:Sn-summary}
    For $n\leq 8$, all cyclic subgroups of $S_n<O(H_{M_n})$ are realizable in $\diff^+(M_n)$.  In contrast, for any $n\geq 15$ there exists a cyclic subgroup of $S_n$ not realizable in $\diff^+(M_n)$.  When $n\geq 8$, the group $S_n<O(H_{M_n})$ itself is not realizable in $\diff^+(M_n)$.
\end{prop}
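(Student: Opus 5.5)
The statement has three parts, and I will treat each separately, relying on results the paper says it proves later.

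\textbf{Cyclic subgroups for $n\le 8$.} I will realize a conjugacy representative of each permutation $\sigma\in S_n$ by cycle type. The basic building blocks are two equivariant models:
\begin{itemize}
\item an equivariant connected sum of $\cp^2$'s arranged along the vertices of a regular polygon or polytope, with the rotation cyclically permuting the summands (the constructions of \Cref{prop:polygon});
\item fixed $\cp^2$ summands carrying linear actions, attached at fixed points of the rotation.
\end{itemize}
A $k$-cycle comes from $k$ copies of $\cp^2$ placed around an axis and glued to a central copy, or to a central $S^4$, at points of a free orbit. Several cycles can then be combined by attaching such orbits at different orbits of a common linear action on a central piece.

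For $n\le 8$ the cycle types are few, so I will check each partition case by case. Odd-order cycle types can also be checked against the Hambleton--Tanase list of linear equivariant connected sums. I expect the hardest cases to be those with few fixed points and several long cycles of coprime lengths, such as $(3,5)$ or $(2,3,3)$. For these I would arrange the orbits on a central $\cp^2$ carrying a linear $\Z/m$ action, with $m$ the lcm of the cycle lengths, and choose the weights so that the orbits of free points have the required sizes.

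\textbf{A non-realizable cyclic subgroup for $n\ge 15$.} I will use \Cref{cor:Sn-to-8}, which produces a specific non-realizable element for each $n\ge 15$. This is presumably an odd-order permutation, say a product of cycles of prime lengths such as $3\cdot5$. Its Hambleton--Tanase combinatorial data must then fail to match any linear connected-sum model. The natural way to see this is via the $G$-signature theorem (\Cref{thm:gsig}) and Edmonds' fixed-set count (\Cref{prop:ed}) applied to a prime-order power of the element. I will simply cite that corollary. If it were only proved for $15\le n$ in a single residue class, I would pad the element with fixed points, since adding fixed summands should preserve the obstruction when it is phrased through these counts.

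\textbf{$S_n$ itself is not realizable for $n\ge 8$.} I will apply \Cref{prop:sn-obstr}, which obstructs large subgroups $S_k\le S_n$. The point to check is that $S_n\le S_n$ lies in the class defined there whenever $n\ge 8$. If instead the hypothesis covers only some $S_k\le S_n$ with $k\ge 8$, the result still follows, because any realization of $S_n$ restricts to a realization of each of its subgroups. Realizability is inherited by subgroups: restrict the section. So it suffices to exhibit one non-realizable subgroup.

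The main obstacle is the exhaustive positive check for $n\le 8$. In particular, cycle types with no fixed points need a central piece that is not itself a summand, and I must make sure the resulting connected sum really is $\#_n\cp^2$ with orientations chosen so that every summand is $\cp^2$ and not $\overline{\cp^2}$.
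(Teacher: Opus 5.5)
\textbf{Parts 2 and 3.} These match the paper. The proposition is exactly \Cref{cor:Sn-to-8} combined with \Cref{prop:sn-obstr}, and taking $k=n$ in the latter satisfies $k-2>\max\{n/2,5\}$ precisely when $n\ge 8$.

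\textbf{Part 1 has a gap.} You leave the realization for $n\le 8$ as an unexecuted case-by-case check. Worse, the one concrete step you give for the cases you single out as hardest would fail. The cycle types $(3,5)$ and $(2,3,3)$ occur only for $n=8$, and there they have no fixed points. So there is no central $\cp^2$ available to carry a linear $\Z/15$ or $\Z/6$ action: such a central copy has a fixed hyperplane class, which is a $1$-cycle and would force $n\ge 9$.

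The missing observation is that for $n\le 8$ every cycle type has at most two distinct lengths greater than $1$, because $2+3+4=9>8$. With that, every $\sigma$ is handled uniformly by the model of \Cref{prop:polygon}(1):
\begin{itemize}
\item let the generator act on $S^4\subset\R^5$ by rotations through $2\pi/k$ and $2\pi/\ell$ in two orthogonal $2$-planes;
\item place the attaching points of the $k$-cycles and of the $\ell$-cycles as orbits on the two corresponding great circles;
\item attach the fixed summands at fixed points, namely fixed points of the rotation on $S^4$ or coordinate points of attached, linearly acting $\cp^2$'s;
\item glue equivariantly.
\end{itemize}
Because the gluing chosen at one point of each orbit is transported by orientation-preserving maps, every summand is $\cp^2$. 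This also settles your orientation concern.

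\textbf{Your padding fallback is wrong.} You suggest that, if needed, a non-realizable element could be padded with fixed points. This fails, because the Hambleton--Tanase obstruction weakens as fixed points are added. Each central $\cp^2$ can supply up to three new isotropy lines, which is why the bound in \Cref{lem:HT-maximal-stabilizers} is $\max\{2,3C_{1,n}(\sigma)\}$. Likewise, \Cref{lem:cycle-restrinctions-small-cases} only treats permutations with zero or one fixed point. The paper also remarks that $\iota(\sigma,\mathrm{Id})$ can become realizable after stabilizing. This is exactly why \Cref{cor:Sn-to-8} needs the separate fixed-point-free families $(3,5,n-8)$, $(3,7,n-10)$ and $(3,5,7,n-15)$, plus the one-fixed-point example $(1,3,5,7)$ at $n=16$.

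Your guessed mechanism for that corollary is also off. The obstruction does not come from \Cref{thm:gsig} or \Cref{prop:ed}. It comes from the reduction in \Cref{thm:ht-formal} to standard-linear actions, followed by combinatorics of orbit lengths. Since you only cite the corollary, this does not break your argument, but neither heuristic should be relied on.
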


In \Cref{sec:cyclic}, we briefly present some work of Hambleton--Tanase \cite{ht}. We then state two propositions and use them to prove \Cref{thm:asym}, before proving these two propositions.  In \Cref{sec:2group}, we prove \Cref{thm:2groups}.  Finally in \Cref{sec:largeSn}, we present a set of related constructions realizing certain non-cyclic subgroups of $S_n<O(H_{M_n})$, together with an obstruction for many large non-cyclic subgroups.

\subsection{Odd order elements}\label{sec:cyclic}

We now introduce some results of Hambleton--Tanase \cite{ht}, in which the authors examine faithful $\Z/m\Z$-actions on $M_n$, for odd $m$.  Some of the nomenclature is our own, in order to exposit only what is needed here.  Let $t$ denote a generator of $\Z/m\Z$, and let $\zeta_m$ be a primitive $m$th root of unity.  On a fixed copy of $\cp^2$ with projective coordinates $[X:Y:Z]$, an action of $\Z/m\Z$ is called a \textit{standard linear action} if it is of the form \[t\mapsto \begin{pmatrix}
    1&&\\&\zeta^a&\\&&\zeta^b
\end{pmatrix}\in PGL_3\C< \diff^+(\cp^2)\] in projective coordinates, for some fixed $a,b\in\Z$ with $(a,b,m)=1$.  Note that any standard linear action of $t$ acts trivially on $H_2(\cp^2;\Z)$ and fixes the three coordinate points $p_X=[1:0:0]$, $p_Y=[0:1:0]$, and $p_Z=[0:0:1]$.  At $p_X$, consider the affine chart $\{[1:Y:Z]: Y,Z\in\C\}$, identified with $\C^2$ by $[1:Y:Z]\mapsto (Y,Z)\in\C^2$.  Identifying $\C^2$ with the complex tangent space $T_{(0,0)}\C^2$ in the usual way, observe that $t$ acts on $T_{p_X}\cp^2$ by the matrix
\[dt_{p_X}=\begin{pmatrix}\zeta^a&\\&\zeta^b\end{pmatrix}\in GL(T_{p_X}\cp^2).\]
Similarly using affine coordinate charts $\{[X:1:Z]:X,Z\in\C\}$ at $p_Y$ and $\{[X:Y:1]:X,Y\in\C\}$ at $p_Z$ gives \[dt_{p_Y}=\begin{pmatrix}\zeta^{-a}&\\&\zeta^{b-a}\end{pmatrix}, dt_{p_Z}=\begin{pmatrix}\zeta^{-b}&\\&\zeta^{a-b}\end{pmatrix}.\]
To record this data, we say that the points $p_X,p_Y,p_Z$ have the \textit{rotation numbers} $(a,b),\ (-a,b-a)$, and $(-b,a-b)$, respectively.  Sometimes, for clarity, we write $(a,b;m)$ for $(a,b)$.  Note that these rotation numbers are well-defined modulo $m$ up to the relations $(a,b)\equiv (b,a)\equiv(-a,-b)$.  Consider a point $p\in\cp^2$; a calculation shows that the orbit of $p$ under a standard linear action as above has cardinality in the set $\{1,m,\frac{m}{(b,m)},\frac{m}{(a,m)},\frac{m}{(a-b,m)}\}$.\\

\noindent Now use standard linear actions to construct  $M_n$ with a $(\Z/m\Z)$ action as follows. First, consider two copies of $\cp^2$, each with a standard linear action such that one has rotation number $(a,b)$ at some coordinate point $p_1$, and the other has rotation number $(-a,b)$ at some coordinate point $p_2$.  Then there is a natural $(\Z/m\Z)$-equivariant connect sum $\cp^2\#\cp^2$ formed locally in neighborhoods of $p_1$ and $p_2$.  Note that $\Z/m\Z$ acts trivially on $H_2(\cp^2\#\cp^2;\Z)$.  More generally, form $M_k$ as a connect sum of $k$ copies of $\cp^2$ with standard linear $\Z/m\Z$-actions, where each connect sum is formed at coordinate points in distinct copies of $\cp^2$ with respective rotation numbers of the form $(a,b)$ and $(-a,b)$.

Note that the action of $\Z/m\Z$ on $H_2(M_k;\Z)$ is trivial. These $k$ copies of $\cp^2$ are called \textit{central}, and this $M_k$ is called the \textit{central core}.  Now let $p\in M_k$ for $k>0$ with $\Z/m\Z$-orbit $O_p$ of size $r$.  Then form the manifold $M_{k+r}$ as a $(\Z/m\Z)$-equivariant connect sum by gluing one copy of $\cp^2$ to $M_k$ at each $p'\in O_p$. These $r$ copies of $\cp^2$ are called \textit{peripheral}; their $r$ hyperplane classes in $H_2(M_{k+r};\Z)$ now form a single $\Z/m\Z$-orbit, and they are permuted cyclically by the action of $t$. (See \Cref{fig:cyclic}.)  Inductively, we may attach equivariantly additional peripheral copies of $\cp^2$ onto the existing one.  For more details, see the notion of \textit{admissible trees} \cite{ht}.

Alternatively when $k=0$, glue copies of $\cp^2$ equivariantly to a fixed orthogonal $\Z/m\Z$ action on $S^4$, which can always be uniformized to take the form of rational rotations in each of two orthogonal 2-planes.  Any $M_n$ formed in this way from central and peripheral copies of $\cp^2$ is said to carry a \textit{standard-linear action} (or for short, $M_n$ is standard-linear).  In either case, this construction arranges the copies of $\cp^2$ into a tree $T$, and let $T_0$ denote the tree of central copies.

In particular, notice that any permutation action on $H_2(M_n;\Z)$ with at most two non-trivial cycle lengths is realizable by a standard-linear action.  Furthermore, any two elements $\sigma,\tau\in S_n\leq O(H_{M_n})$ with inequivalent cycle types are non-conjugate in $O(H_{M_n})$.  The number of such permutations is at least $c_1n^2(\log n)^2$ for some constant $c_1$, and so at least as many pair-wise non-conjugate elements of $O(H_{M_n})$ are realizable in $\diff^+(M_n).$\\

\begin{figure}[hbt!] 
    \centering

\usetikzlibrary{arrows.meta}
\definecolor{cpblue}{RGB}{0,20,255}
\definecolor{cared}{RGB}{255,20,20}

\resizebox{0.75\textwidth}{!}{%
\begin{tikzpicture}[
  x=1pt,
  y=-1pt,
  line cap=round,
  line join=round,
  every node/.style={inner sep=0pt, outer sep=0pt},
  maincircle/.style={draw=black, line width=2.35pt, fill=white},
  mainline/.style={draw=black, line width=2.35pt},
  blueline/.style={draw=cpblue, line width=2.45pt},
  bluecircle/.style={draw=cpblue, line width=2.45pt, fill=white},
  redarrow/.style={
    draw=cared,
    line width=2.55pt,
    -{Latex[length=13pt,width=9pt]}
  },
  reddouble/.style={
    draw=cared,
    line width=2.55pt,
    {Latex[length=13pt,width=9pt]}-{Latex[length=13pt,width=9pt]}
  },
  bluearrow/.style={
    draw=cpblue,
    line width=2.55pt,
    -{Latex[length=13pt,width=9pt]}
  },
  blackarrow/.style={
    draw=black,
    line width=2.45pt,
    -{Latex[length=13pt,width=9pt]}
  },
  point/.style={circle, fill=black, minimum size=13.5pt, inner sep=0pt},
  plabel/.style={font=\fontsize{39}{42}\selectfont, text=black},
  biglabel/.style={font=\fontsize{43}{48}\selectfont},
  bluelabel/.style={font=\fontsize{43}{48}\selectfont, text=cpblue}
]

\path[use as bounding box, fill=white] (0,0) rectangle (1448,1086);

\coordinate (UL) at (512,337);
\coordinate (UR) at (1046,338);
\coordinate (LO) at (512,825);

\def\RUL{136}
\def\RUR{137}
\def\RLO{138}

\draw[maincircle] (UL) circle[radius=\RUL];
\draw[maincircle] (UR) circle[radius=\RUR];
\draw[maincircle] (LO) circle[radius=\RLO];

\path[fill=white]
  (630,300)
    .. controls (690,324) and (860,324) .. (920,300)
  -- (920,369)
    .. controls (860,345) and (690,345) .. (630,369)
  -- cycle;

\draw[mainline]
  (630,300)
    .. controls (690,324) and (860,324) .. (920,300);

\draw[mainline]
  (630,369)
    .. controls (690,345) and (860,345) .. (920,369);

\path[fill=white]
  (478,462)
    .. controls (501,532) and (501,635) .. (478,706)
  -- (545,706)
    .. controls (523,635) and (523,532) .. (545,462)
  -- cycle;

\draw[mainline]
  (478,462)
    .. controls (501,532) and (501,635) .. (478,706);

\draw[mainline]
  (545,462)
    .. controls (523,532) and (523,635) .. (545,706);


\path[fill=white]
  (1033,198)
    .. controls (1040,180) and (1040,163) .. (1038,153)
  -- (1059,153)
    .. controls (1056,163) and (1056,180) .. (1059,198)
  -- cycle;

\draw[blueline]
  (1033,198)
    .. controls (1040,180) and (1040,163) .. (1038,153);

\draw[blueline]
  (1059,198)
    .. controls (1056,180) and (1056,163) .. (1059,153);

\draw[bluecircle] (1047,112) circle[radius=44];

\path[fill=white]
  (1172,384)
    .. controls (1187,392) and (1198,394) .. (1201,395)
  -- (1201,419)
    .. controls (1196,418) and (1185,414) .. (1171,407)
  -- cycle;

\draw[blueline]
  (1172,384)
    .. controls (1187,392) and (1198,394) .. (1201,395);

\draw[blueline]
  (1171,407)
    .. controls (1185,414) and (1196,418) .. (1201,419);

\draw[bluecircle] (1239,407) circle[radius=43];

\path[fill=white]
  (1950,418)
    .. controls (928,430) and (918,438) .. (909,447)
  -- (927,464)
    .. controls (935,452) and (946,444) .. (962,435)
  -- cycle;

\draw[blueline]
  (943,418)
    .. controls (928,430) and (918,438) .. (909,447);

\draw[blueline]
  (962,435)
    .. controls (946,444) and (935,452) .. (927,464);

\draw[bluecircle] (900,460) circle[radius=43];

\path[fill=white]
  (373,812)
    .. controls (360,822) and (354,828) .. (349,832)
  -- (349,856)
    .. controls (356,851) and (363,846) .. (374,838)
  -- cycle;

\draw[blueline]
  (373,812)
    .. controls (360,822) and (354,828) .. (349,832);

\draw[blueline]
  (374,838)
    .. controls (363,846) and (356,851) .. (349,856);

\draw[bluecircle] (321,842) circle[radius=42];

\path[fill=white]
  (649,814)
    .. controls (661,822) and (667,826) .. (673,831)
  -- (673,855)
    .. controls (666,850) and (661,846) .. (648,838)
  -- cycle;

\draw[blueline]
  (649,814)
    .. controls (661,822) and (667,826) .. (673,831);

\draw[blueline]
  (648,838)
    .. controls (661,846) and (666,850) .. (673,855);

\draw[bluecircle] (697,842) circle[radius=41];


\draw[redarrow]
  (888,302)
    .. controls (895,222) and (935,166) .. (990,139);

\draw[redarrow]
  (1112,140)
    .. controls (1192,185) and (1232,270) .. (1236,340);

\draw[redarrow]
  (1193,452)
    .. controls (1112,514) and (1022,525) .. (940,496);

\draw[reddouble]
  (425,845)
    .. controls (482,823) and (548,823) .. (598,845);


\node[biglabel, anchor=west] at (51,530)
  {Central $\cp^{2}$s};

\draw[blackarrow] (225,466) -- (365,340);
\draw[blackarrow] (229,592) -- (365,718);

\node[bluelabel, anchor=west] at (1014,762)
  {Peripheral $\cp^{2}$s};

\draw[bluearrow] (989,772) -- (768,834);

\draw[bluearrow] (1050,720) -- (940,520);


\node[point] at (640,335) {};
\node[point] at (920,335) {};
\node[point] at (512,462) {};
\node[point] at (512,706) {};

\node[plabel, anchor=west] at (632,282) {$P_Z$};
\node[plabel, anchor=east] at (884,282) {$P_X$};
\node[plabel, anchor=west] at (552,505) {$P_Y$};
\node[plabel, anchor=west] at (552,664) {$P_X$};

\end{tikzpicture}}

    \caption{A standard-linear action on $M_8$}
    \label{fig:cyclic} 
\end{figure}

\noindent Finally, we state a (rephrased version of) a main result of Hambleton--Tanase \cite{ht}.
\begin{theorem}[cf \cite{ht, Theorem A}]\label{thm:ht-formal}
    Let $G\cong \Z/m\Z$ act faithfully by diffeomorphisms on $M_n$, for $m,n>0$ and $m$ odd.  Let $Fix(G)\subseteq M_n$ be its fixed set, and let $\rho_G:H_2(M_n;\Z)\to H_2(M_n;\Z)$ be its permutation action on homology.  Then there exists a standard-linear action of $H\cong \Z/m\Z$ on $M_n$ with $Fix(H)$ its fixed set and $\rho_H:H_2(M_n;\Z)\to H_2(M_n;\Z)$ its action on homology, such that the following two properties hold. \begin{enumerate}
        \item There is a diffeomorphism $\phi:Fix(G)\to Fix(H)$. Furthermore, $\phi$ can be extended to a map $\tilde\phi:N_G\to N_H$ between tubular neighborhoods $N_G$ of $Fix(G)\subseteq M_n$ and $N_H$ of $Fix(H)\subseteq M_n$, such that at each $p\in Fix(G)$, $\tilde \phi$ induces an isomorphism between the actions $G\to GL(T_pN_G)$ and $H\to GL(T_{\tilde \phi(p)}N_H)$.
        \item Additionally, $\rho_G=\rho_H$.
    \end{enumerate}
\end{theorem}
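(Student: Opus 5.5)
The plan is to deduce the statement from Hambleton--Tanase \cite[Theorem A]{ht}, which is proved with equivariant Yang--Mills moduli spaces; I will not reprove that analytic input. Instead I will translate their conclusion into the language of standard-linear actions introduced above. Roughly, their theorem says the following for a smooth, homologically arbitrary $\Z/m\Z$-action on $\#_n\cp^2$ with $m$ odd. The \emph{fixed-point data} (components of $Fix$, with their tangential/normal isotropy representations and self-intersections) and the \emph{integral homology representation} agree with those of a linear model. That model is a $\Z/m\Z$-equivariant connected sum of copies of $\cp^2$, each carrying a linear action, glued along an \emph{admissible tree}.

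\textbf{Step 1: normalize the homology action.} First I reduce to the case where $\rho_G$ is a genuine permutation of $\{e_1,\dots,e_n\}$. Since $m$ is odd, $\rho_G$ is an odd-order element of $O(H_{M_n})\cong G_n\rtimes S_n$. On each cycle of length $\ell$ of the underlying permutation, $\rho_G^\ell$ acts on each basis vector of the cycle by the product $\epsilon$ of the signs around the cycle. Because $\rho_G$ has odd order, $\ell$ is odd and $\rho_G^{m}=1$ with $m/\ell$ odd, so $\epsilon^{m/\ell}=\epsilon=1$. Hence conjugating by a suitable element of $G_n$, which just re-chooses orientations of the hyperplane classes, makes all signs $+1$. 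This conjugation is realized by diffeomorphisms (by \cite{bar}), so it is harmless for the statement. Thus $\rho_G\in S_n$, matching the paper's setting.

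\textbf{Step 2: build $H$ from the admissible tree.} Next I match the admissible trees of \cite{ht} with the central/peripheral construction described above. The copies of $\cp^2$ fixed by the group form the central core $T_0$, glued at coordinate points with rotation numbers $(a,b)$ and $(-a,b)$. The free orbits of summands are the peripheral copies, glued along orbits of size $r\in\{m,\tfrac{m}{(a,m)},\dots\}$. The case where no summand is fixed is the $S^4$-based case. From this dictionary I obtain the standard-linear $H$ on $M_n$. By construction $H$ has the same fixed-point data as $G$, and the same permutation pattern of hyperplane classes up to relabeling. Relabeling is conjugation in $S_n$, which is again realized by diffeomorphisms, so I may arrange $\rho_G=\rho_H$ on the nose; this gives property~2.

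\textbf{Step 3: extend to tubular neighborhoods.} For property~1, the fixed set of an odd-order action consists of isolated points and surfaces. Matching the fixed-point data gives a bijection of components that is a diffeomorphism on each component. At isolated points, the rotation numbers $(a,b;m)$ agree up to the relations $(a,b)\equiv(b,a)\equiv(-a,-b)$. This means the tangent representations are isomorphic by an orientation-preserving linear map, and the equivariant exponential map then gives $\tilde\phi$ near those points. For fixed surfaces, $\tilde\phi$ comes from the equivariant tubular neighborhood theorem: the $\Z/m\Z$-equivariant normal bundle is determined by the rotation number on the fiber together with the Euler number. The Euler number is the self-intersection recorded in the fixed-point data, and the fixed-point data of $G$ and $H$ coincide.

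I expect the main obstacle to be the bookkeeping in Step~2. I must check that the admissible-tree models of \cite{ht} are literally the standard-linear actions defined here, including the gluing convention $(a,b)\leftrightarrow(-a,b)$ and iterated peripheral attachments. I must also check that their equivalence of fixed-point data really includes the normal representations, and not just the fixed set as a space. I would first check this directly on small cases such as $M_2$ and the configuration in \Cref{fig:cyclic}, and then argue by induction on the tree.
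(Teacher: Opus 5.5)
Your proposal takes essentially the same route as the paper: the paper gives no independent argument and simply imports Hambleton--Tanase's Theorem A (proved via equivariant Yang--Mills moduli spaces and a cobordism to an admissible-tree model), and your Steps 1--3 are the translation bookkeeping the paper leaves implicit, which is sound. Two cosmetic corrections: peripheral orbits in the model need not be free (their sizes can be proper divisors of $m$, as your own list of orbit sizes shows), and the relabeling in Step 2 can be done by choosing the identification of the tree model with $M_n$, with no need to invoke Baraglia.
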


\noindent In other words, \Cref{thm:ht-formal} shows that if an element $\sigma\in S_n< O(H_{M_n})$ of odd order $m$ can be realized at all in $\diff^+(M_n)$, then it can be realized \textit{standard-linearly}.  Furthermore, the original realization and the standard-linear realization have the same fixed set data and homological actions.  Hambleton--Tanase prove \Cref{thm:ht-formal} by studying an equivariant Yang-Mills moduli space and constructing a cobordism between the two realizations.

\vspace{5mm}

\noindent The following lemma and corollary provide the key reduction for proving \Cref{thm:asym}, as it puts a necessary condition on any realizable permutation.  Then \Cref{prop:theta-prime-cycles} and \Cref{prop:odd-partition-prime-parts-small-fixed} quantify the proportion of partitions and permutations which satisfy this condition, respectively.  The proof of \Cref{thm:asym} can then be assembled from these results.\\

We now extract from Hambleton--Tanase \cite{ht} a simpler necessary condition for realizability in $\diff^+(M_n)$ which is strong enough for the asymptotic result below. Let $\sigma\in S_n$ have odd order $m$, and write $\pi=\langle \sigma\rangle\cong C_m.$ If $\sigma$ has a cycle of length $d$, then the corresponding transitive $\mathbb Z[\pi]$-summand
of the permutation module  (i.e., $H_2(M_n;\Z)$ with the action of $C_m$ by permuting the standard basis) is $\mathbb Z[\pi/\pi_d]$, where $\pi_d\le \pi$ is the unique subgroup of order $m/d$. Let $C_{d,n}(\sigma)$ denote the number of $d$-cycles in a permutation in $\sigma\in S_n$, and $C_{1,n}(\lambda)$ similarly for partitions $\lambda$ of $n$.  Define
\[S(\sigma):=\{\pi_d\le \pi:\ C_{d,n}(\sigma)>0\},\]
and let $\mu(\sigma)$ denote the number of maximal proper elements of $S(\sigma)$, where maximality
is with respect to inclusion among elements of $S(\sigma)\setminus\{\pi\}$.

\begin{lem}\label{lem:HT-maximal-stabilizers}
Suppose $\sigma\in S_n<O(H_{M_n})$ has odd order and is realizable in
$\mathrm{Diff}^+(M_n)$. Then
\[
\mu(\sigma)\le \max\{2,3C_{1,n}(\sigma)\}.
\]
\end{lem}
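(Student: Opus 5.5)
Via \Cref{thm:ht-formal}, we may assume that $\sigma$ is realized by a standard-linear action of $\pi=\langle\sigma\rangle\cong C_m$ on $M_n$, built from a central core tree $T_0$ of $k$ central copies (or $S^4$ when $k=0$) together with peripheral copies attached along orbits. The plan is to read off $\mu(\sigma)$ from this combinatorial model. Each peripheral copy $P$ attached at a point $p$ whose orbit has size $d$ contributes one $d$-cycle, and its stabilizer $\pi_d=\mathrm{Stab}(p)$ is the isotropy group of the attaching point. Note that $C_{1,n}(\sigma)=k$ (up to fixed peripheral copies, which are impossible since peripheral copies come in orbits of size $>1$), so the central copies exactly account for the fixed basis vectors. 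If $\pi_{d'}\subseteq\pi_d$ and $P'$ is attached to $P$, then $\mathrm{Stab}(P')\le \mathrm{Stab}(P)$, since stabilizers only shrink along the tree away from the core.

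\textbf{Key step 1.} Let $\pi_d$ be a maximal proper element of $S(\sigma)$, realized by some peripheral copy $P$. Follow the tree from $P$ toward the core, and let $P_0$ be the first copy on this path. Either $P_0$ is central, or $P_0$ is peripheral with stabilizer strictly containing $\pi_d$. The latter contradicts maximality unless that stabilizer is $\pi$ itself, but peripheral stabilizers are proper. Hence $P$ may be taken to be attached directly to the central core at a point $p$ with $\mathrm{Stab}(p)=\pi_d$, or to $S^4$ when $k=0$.

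\textbf{Key step 2.} Count the possible isotropy groups of points in the central core, namely the nontrivial proper stabilizers of points in $M_k$. On a single standard-linear $\cp^2$ with weights $(0,a,b)$, the orbit sizes are $m/(a,m)$, $m/(b,m)$ and $m/(a-b,m)$ (besides $1$ and $m$). So the maximal proper isotropy groups arise from the three coordinate lines, giving at most $3$ per central copy. The connect-sum gluing introduces nothing new, since points near the gluing neck inherit isotropy from one of the two summands. For $k=0$, the orthogonal action on $S^4$ is a rotation in two planes, so there are at most $2$ nontrivial isotropy groups, coming from the two rotation planes. Hence the distinct maximal stabilizers number at most $3k$ if $k>0$, and at most $2$ if $k=0$. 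Combined with step 1 this gives $\mu(\sigma)\le\max\{2,3C_{1,n}(\sigma)\}$.

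\textbf{Main obstacle.} The delicate part is step 2's bookkeeping: we must make sure that every point with nontrivial proper isotropy in the central core lies on one of the finitely many invariant lines (or planes in $S^4$), including points inside previously attached peripheral pieces. We must also check that the peripheral/central identification of fixed basis vectors with central copies holds in Hambleton--Tanase's admissible-tree formalism. If the equality $C_{1,n}=k$ fails (e.g.\ fixed hyperplane classes arising otherwise), the fallback is to bound $k\le C_{1,n}(\sigma)$, which suffices for the inequality.
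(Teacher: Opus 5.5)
Your proposal is correct and follows essentially the paper's route: you reduce via \Cref{thm:ht-formal} to the standard-linear model, use maximality to force a maximal proper stabilizer to be the isotropy group of a point where a peripheral branch attaches directly to the central core (or to $S^4$ when $C_{1,n}(\sigma)=0$), and then count at most three nontrivial proper isotropy groups per linear $\cp^2$ and at most two on $S^4$. The paper cites Hambleton--Tanase's Lemmas 1.12 and 1.13 for exactly the tree facts you derive by hand, and one small repair is needed in your Key step 1: the parent's stabilizer need only \emph{contain} $\pi_d$, not strictly (e.g.\ attaching at a point of $P_0$ fixed by $\mathrm{Stab}(P_0)$), so you should note that by maximality every peripheral ancestor of $P$ has stabilizer exactly $\pi_d$ and then pass to the ancestor attached to the core.
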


\begin{proof}
Let $\pi=\langle\sigma\rangle\cong C_m$. By \Cref{thm:ht-formal}, the given smooth
$\pi$-action has the same induced permutation action on $H_2(M_n;\mathbb Z)$ as a standard linear action. Hambleton---Tanase encode such
equivariant connected sums by admissible weighted trees $T$ which exactly characterize standard linear actions as summarized above.  Following \cite{ht}, write $H_2(M_n;\mathbb Z)$ as a
permutation module
\[
H_2(M_n;\mathbb Z)
=
\bigoplus_{\pi_\alpha\in S}\mathbb Z[\pi/\pi_\alpha]^{k_\alpha},
\]
where $S$ is the set of stabilizer subgroups for the standard basis elements of $H_2(M_n;\mathbb Z)$.
Since $X(T)$ has the same permutation action on homology as the original action, this set $S$ is
precisely $S(\sigma)$.

First suppose $\pi\notin S(\sigma)$, or equivalently $C_{1,n}(\sigma)=0$; that is, there are no central copies of $\cp^2$. Lemma 1.13 of \cite{ht} says that in this case $S$ has at most two maximal elements. Since $\pi\notin S$, all maximal elements of $S$ are proper subgroups of $\pi$. Hence $\mu(\sigma)\le 2$.

Now suppose $\pi\in S(\sigma)$, let $
h=C_{1,n}(\sigma)$; in other words, there are $h>0$ central $\cp^2$s.  Let $\tau\in S(\sigma)\setminus\{\pi\}$ be a maximal proper subgroup, and suppose it is nontrivial (f any such $\tau$ is trivial, the statement follows immediately).  By Lemma 1.12 of \cite{ht}, proper nontrivial subgroups in $S$ arise from edges\[e:v_i\to v_j\] such that $m_i>m_j=|\tau|$ and such that the fixed set $M_n^{\ang{\tau}}$ contains a $2$-sphere at the vertex $v_j$. Since
$m_j\mid m_i$, the subgroup $\tau$ is contained in the subgroup $\pi_i$ associated to $v_i$.
If $\pi_i\ne\pi$, then $\pi_i$ is a proper element of $S(\sigma)$ strictly containing $\tau$,
contradicting maximality of $\tau$. Therefore $\pi_i=\pi$, and so $v_i$ is a vertex of the central subtree $T_0$.

By the definition of the edge operation in the admissible weighted tree, the attaching point on the
initial side is chosen in the exact isotropy stratum corresponding to $\tau$. Therefore $\tau$ must
occur as an actual proper isotropy subgroup on the linear $\mathbb{CP}^2$-vertex $v_i\in T_0$.
A linear action on $\mathbb{CP}^2$ has at most three nontrivial proper isotropy groups, corresponding
to the three invariant coordinate lines. Hence each vertex of $T_0$ accounts for at most three
nontrivial maximal proper elements of $S(\sigma)$. Since $T_0$ has $h=C_{1,n}(\sigma)$ vertices,
we get $\mu(\sigma)\le 3h=3C_{1,n}(\sigma).$
\end{proof}

\noindent Define
\[P_n(\sigma)\coloneqq\#\{p\le(\log n)^2:\ p\text{ is an odd prime and }C_{p,n}(\sigma)>0\}.\] The restriction that $p\leq (\log n)^2$ is not actually needed for the following corollary, but it will be useful computationally later on to prove \Cref{prop:theta-prime-cycles}.

\begin{cor}\label{cor:prime-cycle-obstruction}
If $\sigma\in S_n<O(H_{M_n})$ has odd order and is realizable in $\mathrm{Diff}^+(M_n)$, then
\[
P_n(\sigma)\le \max\{2,3C_{1,n}(\sigma)\}.
\]
\end{cor}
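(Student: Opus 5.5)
The plan is to deduce the corollary directly from \Cref{lem:HT-maximal-stabilizers} by showing that $P_n(\sigma)\le\mu(\sigma)$. To do this, I will exhibit, for each odd prime $p\le(\log n)^2$ with $C_{p,n}(\sigma)>0$, a distinct maximal proper element of $S(\sigma)$.

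First I fix such a prime $p$. Since $\sigma$ has a $p$-cycle, $p$ divides $m$, and the subgroup $\pi_p\le\pi$ of order $m/p$ lies in $S(\sigma)$. It is a proper subgroup because $p>1$. Moreover $\pi_p$ has prime index $p$ in the cyclic group $\pi$, so it is a maximal subgroup of $\pi$. The only subgroup strictly containing $\pi_p$ is $\pi$ itself. Hence $\pi_p$ is maximal among the elements of $S(\sigma)\setminus\{\pi\}$, and so it is one of the maximal proper elements counted by $\mu(\sigma)$.

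Next I check that distinct primes give distinct subgroups. In the cyclic group $\pi$, subgroups are determined by their orders, and $\pi_p$ has order $m/p$. So if $p\neq q$ then $\pi_p\neq\pi_q$. The assignment $p\mapsto\pi_p$ is therefore injective from the set counted by $P_n(\sigma)$ into the set of maximal proper elements of $S(\sigma)$. This gives $P_n(\sigma)\le\mu(\sigma)$. The bound $p\le(\log n)^2$ plays no role here; it only shrinks the set being counted.

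Finally, \Cref{lem:HT-maximal-stabilizers} gives $\mu(\sigma)\le\max\{2,3C_{1,n}(\sigma)\}$ for realizable $\sigma$ of odd order, which yields the claim. There is no real obstacle in this argument. The only point needing care is that "maximal" in the definition of $\mu$ means maximal within $S(\sigma)\setminus\{\pi\}$ and not within all subgroups of $\pi$. Prime-index subgroups are maximal in the stronger sense, so they are maximal in this one too.
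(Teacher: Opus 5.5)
Your proposal is correct and matches the paper's proof. Both send each prime $p$ counted by $P_n(\sigma)$ to the prime-index subgroup of order $m/p$, which is maximal proper in $S(\sigma)$ and distinct for distinct primes, so that $P_n(\sigma)\le\mu(\sigma)$, and then apply \Cref{lem:HT-maximal-stabilizers}.
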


\begin{proof}
If $C_{p,n}(\sigma)>0$, then $S(\sigma)$ contains the subgroup of $\pi=\langle\sigma\rangle$
of order $m/p$. This subgroup has prime index $p$ in $\pi$, hence is maximal proper in the
subgroup lattice of $\pi$. Therefore it is maximal proper in $S(\sigma)$. Distinct primes $p$ give
distinct subgroups of orders $m/p$. Thus $P_n(\sigma)\le \mu(\sigma)$, and the result now follows from Lemma \ref{lem:HT-maximal-stabilizers}.
\end{proof}

\noindent For $\theta\in\left\{1,\frac12\right\},$ let $\mathbb P_{n,\theta}$ be the probability distribution on odd order permutations $\sigma\in S_n$
given by
\[\mathbb P_{n,\theta}(\sigma)=
\frac{\theta^{c(\sigma)}}{\sum_{\tau\in S_n,\ \tau\text{ odd order}}\theta^{c(\tau)}},\] where $c(\sigma)$ denotes the total number of cycles of $\sigma$. The case $\theta=1$ is the uniform distribution on odd order elements of $S_n$, and the case $\theta=\frac{1}{2}$ will be useful for \Cref{thm:odd-ab}. The following two propositions provide the key estimates for proving \Cref{thm:asym}.

\begin{prop}\label{prop:theta-prime-cycles}
For $\theta\in\{1,\frac12\}$, under $\mathbb P_{n,\theta}$, $P_n(\sigma)\to\infty$ in probability, while $C_{1,n}(\sigma)$ is bounded above in probability in the following sense:
if $u_n\to\infty$ for $u_n\in\Z$, then $\mathbb P_{n,\theta}(C_{1,n}\ge u_n)\to0.$
Consequently,
\[
\mathbb P_{n,\theta}\left(P_n(\sigma)\le \max\{2,3C_{1,n}(\sigma)\}\right)\to0.
\]
\end{prop}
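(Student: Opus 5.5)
We will work with the exponential generating function of odd-order permutations weighted by $\theta^{c(\sigma)}$. A permutation has odd order iff all its cycle lengths are odd, so
\[
\sum_{n\ge0}\frac{z^n}{n!}\sum_{\substack{\sigma\in S_n\\ \text{odd order}}}\theta^{c(\sigma)}=\exp\Big(\theta\sum_{k\text{ odd}}\frac{z^k}{k}\Big)=\Big(\frac{1+z}{1-z}\Big)^{\theta/2}.
\]
Conditioning on the cycle counts, $\mathbb P_{n,\theta}$ is the conditioned-independent model: the vector $(C_{k,n})_{k\text{ odd}}$ has the law of independent $Z_k\sim\mathrm{Poisson}(\theta x^k/k)$ (with $x$ any value in $(0,1)$) conditioned on $\sum_k kZ_k=n$. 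By the standard singularity-analysis argument of Arratia--Barbour--Tavaré for logarithmic combinatorial structures (here $\kappa=\theta/2$, singularity at $z=1$, with the $(1+z)^{\theta/2}$ factor analytic near $z=1$), the small cycle counts $(C_{1,n},C_{3,n},\dots,C_{b,n})$ converge in total variation to independent $\mathrm{Poisson}(\theta/k)$ variables, as long as $b=b(n)=o(n)$. We will verify this directly by extracting coefficients: the probability of a prescribed small-cycle configuration equals $[z^{n-r}]\,(\tfrac{1+z}{1-z})^{\theta/2}e^{-\theta\sum_{k\le b\text{ odd}} z^k/k}$ divided by $[z^n](\tfrac{1+z}{1-z})^{\theta/2}$, times the Poisson weights. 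By transfer theorems both coefficients are asymptotically $\sim$ const$\cdot n^{\theta/2-1}$, with ratio tending to $e^{-\theta\sum_{k\le b}1/k}\cdot e^{\theta\sum 1/k}$-normalization, uniformly for $b\le(\log n)^2$ and $r$ in the typical range.

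\textbf{The bound on $C_{1,n}$.} This follows from the convergence with $b=1$: $C_{1,n}\Rightarrow\mathrm{Poisson}(\theta)$. Hence, for any $u_n\to\infty$, we get $\mathbb P_{n,\theta}(C_{1,n}\ge u_n)\to0$. A cruder route suffices too: $\mathbb E_{n,\theta}[C_{1,n}]=\theta\,[z^{n-1}]F/[z^n]F\to\theta$ by the coefficient asymptotics, and Markov's inequality finishes.

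\textbf{The divergence of $P_n$.} We use the same total-variation approximation with $b=(\log n)^2=o(n)$. Under the independent Poisson model, $P_n=\sum_{3\le p\le(\log n)^2}\mathbf 1[Z_p>0]$ is a sum of independent Bernoulli variables with means $1-e^{-\theta/p}\ge \theta/(2p)$. Its mean is therefore $\gtrsim\frac{\theta}{2}\log\log\big((\log n)^2\big)\to\infty$ by Mertens, and its variance is at most its mean. Chebyshev then gives $P_n\to\infty$ in probability in the Poisson model, and hence under $\mathbb P_{n,\theta}$ once the total-variation error is $o(1)$.

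To avoid proving full total-variation convergence at $b=(\log n)^2$, we may instead compute the first two factorial moments directly. Writing $I_p=\mathbf 1[C_{p,n}=0]$, we get $\mathbb E[I_p]=[z^n]F e^{-\theta z^p/p}/[z^n]F$ and similarly for $I_pI_q$. Singularity analysis gives these as $e^{-\theta/p}(1+O(p/n))$ and $e^{-\theta/p-\theta/q}(1+O((p+q)/n))$ uniformly for $p,q\le(\log n)^2$. This uniformity is exactly why the cutoff $(\log n)^2$ was imposed. Chebyshev then applies to $P_n=\sum_p(1-I_p)$.

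\textbf{Conclusion.} Fix $M$ large. We have $\mathbb P(P_n\le\max\{2,3C_{1,n}\})\le\mathbb P(C_{1,n}\ge M)+\mathbb P(P_n\le 3M)$. The second term tends to $0$, and the first is at most $\mathbb P(\mathrm{Poisson}(\theta)\ge M)+o(1)$, which is small as $M\to\infty$.

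\textbf{Main obstacle.} The main obstacle is the uniform coefficient estimate $[z^n]F(z)e^{-\theta g(z)}/[z^n]F(z)\to e^{-\theta g(1)}$ for polynomials $g$ of degree up to $(\log n)^2$. Here $g(z)=\sum_{p\in A}z^p/p$ for subsets $A$ of at most two primes. We will handle this by writing $e^{-\theta g(z)}=e^{-\theta g(1)}+(1-z)h(z)$ with $h$ having coefficients bounded by $O(\deg g)$ on $|z|\le 1+1/n$. We then apply a Flajolet--Odlyzko transfer on a $\Delta$-domain of radius $1+1/\deg g$, noting that $(1+z)^{\theta/2}$ is harmless there since $z=-1$ lies far away.
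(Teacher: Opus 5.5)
Your argument is correct in outline and shares the paper's skeleton: the weighted EGF $\left(\frac{1+z}{1-z}\right)^{\theta/2}$, indicators of $\{C_{p,n}>0\}$ for odd primes $p\le(\log n)^2$, Mertens for the mean, a covariance bound with Chebyshev, and a tightness bound for $C_{1,n}$. You obtain the key estimates by a different analytic route.

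You treat each event as a single coefficient extraction, $\mathbb P_{n,\theta}(C_{p,n}=0)=[z^n]F(z)e^{-\theta z^p/p}/[z^n]F(z)$. You then propose a transfer theorem, uniform over this $n$-dependent family, on a $\Delta$-domain of shrinking radius $1+1/p$. The paper instead proves one lemma about the fixed sequence $\alpha_n=[x^n]F$: $A_{n,s}=\alpha_{n-s}/\alpha_n=1+O(s/n)+O(n^{-\theta})$ for $s\le n/2$, and $A_{n,s}=O(n)$ in general. It then expands $\mathbb P_{n,\theta}(C_{p,n}=0)=\sum_j(-1)^j\theta^jA_{n,jp}/(p^jj!)$ by inclusion--exclusion. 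This is exactly the termwise Cauchy-product expansion of your coefficient ratio. The paper estimates it term by term and uses $p\le(\log n)^2$ only to discard the tail $jp>n/2$.

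For $C_{1,n}$ the paper bounds $\mathbb P(C_{1,n}\ge u_n)$ by the $u_n$-th factorial moment $\theta^{u_n}A_{n,u_n}/u_n!$. Your first-moment bound $\mathbb E C_{1,n}=\theta\alpha_{n-1}/\alpha_n\to\theta$ with Markov works equally well. So does your fixed-$M$ argument in place of the paper's choice $u_n=\lfloor\frac18\mathbb E P_n\rfloor$.

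Your route is conceptually cleaner: one coefficient extraction per event, with no alternating sums or tail estimates. The cost is singularity analysis that must be uniform over a family of functions on a shrinking domain. The paper's route needs only the asymptotics of one fixed function plus elementary series bounds.

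Two points in your sketch need correcting.

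First, $z=-1$ does not ``lie far away''. It is a branch point of $F$ on the unit circle, so it lies inside any $\Delta$-domain of radius larger than $1$, where $F$ is not analytic. You need a second indentation there, i.e.\ the multiple-singularity version of the transfer theorem. Its contribution is of order $n^{-1-\theta/2}$ against a main term of order $n^{\theta/2-1}$. The correct estimate is therefore $\mathbb E[I_p]=e^{-\theta/p}\bigl(1+O(p/n)\bigr)+O(n^{-\theta})$, which for $\theta=\frac12$ is genuinely weaker than what you claimed. This is the $O(n^{-\theta})$ term in the paper's lemma. It is harmless: the summed covariances are still $O\bigl((\log n)^4(n^{-\theta}+(\log n)^2/n)\bigr)=o(1)$.

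Second, the Arratia--Barbour--Tavar\'e total-variation theorem is usually stated under the logarithmic condition $k\,\mathbb E Z_k\to\theta$. That condition fails here, because $\mathbb E Z_k=0$ for even $k$. Your direct moment computation does not use it, so drop that framing rather than cite it.
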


\begin{prop}\label{prop:odd-partition-prime-parts-small-fixed}
Let $t:\mathbb N\to \mathbb Z_{\ge0}$  satisfy $t(n)=o\left(\sqrt n/\log n\right).$
Let $\lambda$ be chosen uniformly among partitions of $n$ into odd parts satisfying $C_{1,n}(\lambda)\le t(n).$
Fix constants $0<a<b$, and define \[\mathcal P_n:=\{p:\ p\text{ prime and }a\sqrt n\le p\le b\sqrt n\},\ \ \ R_n(\lambda):=\#\{p\in\mathcal P_n:\ C_{p,n}(\lambda)>0\}.\]
Then
\[
\mathbb P\left(R_n(\lambda)\le \max\{2,3C_{1,n}(\lambda)\}\right)\to0.
\]
Equivalently, with probability tending to $1$, $R_n(\lambda)>\max\{2,3C_{1,n}(\lambda)\}.$
\end{prop}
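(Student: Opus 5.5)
We need to prove that for uniform random partitions of $n$ into odd parts with at most $t(n)$ parts equal to $1$, the number of distinct primes in $[a\sqrt n,b\sqrt n]$ appearing as parts exceeds $\max\{2,3C_{1,n}\}$ with probability tending to $1$. Since $C_{1,n}(\lambda)\le t(n)=o(\sqrt n/\log n)$ deterministically, the plan is to show something stronger: with high probability, $R_n(\lambda)\ge c\sqrt n/\log n$ for some constant $c>0$. Because $3t(n)=o(\sqrt n/\log n)$, this dominates $\max\{2,3C_{1,n}(\lambda)\}$ and gives the claim.

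\textbf{Step 1: Remove the conditioning on $C_{1,n}$.}
Partitions of $n$ into odd parts with exactly $j$ ones correspond bijectively to partitions of $n-j$ into odd parts $\ge3$. I will compare $\#\{C_{1,n}\le t\}=\sum_{j\le t}q_3(n-j)$ with the unconditioned count $q(n)$ of odd partitions, where $q_3$ counts partitions into odd parts $\ge 3$. By Meinardus-type asymptotics, $q(n)\sim C n^{-3/4}e^{\pi\sqrt{n/3}}$, and $q_3(m)\asymp m^{-1/4}e^{\pi\sqrt{m/3}}$ with $q_3(n-j)/q_3(n)\approx e^{-j\pi/(2\sqrt{3n})}$. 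For $t=o(\sqrt n)$ this yields
\[
\mathbb P(C_{1,n}\le t)\gtrsim \frac{t\,n^{-1/4}}{n^{-3/4}}\cdot\frac{1}{\sqrt n}=\frac{t}{\sqrt n}\cdot\Theta(1)\cdot\frac{1}{\sqrt n}\cdot\sqrt n,
\]
that is, up to constants, $\mathbb P(C_{1,n}\le t)\asymp (t+1)/\sqrt n$. The conditioning event can therefore have probability as small as $n^{-1/2}$. It then suffices to show that under the uniform measure on odd partitions,
\[
\mathbb P\left(R_n<c\sqrt n/\log n\right)=o\left(n^{-1/2}\right).
\]
Alternatively, one can work directly with the conditional measure, since it is a mixture over $j\le t$ of uniform partitions of $n-j$ into odd parts $\ge3$. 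These have essentially the same Boltzmann structure, so the Step 2 bounds apply uniformly in $j$. I prefer this second route because it avoids the division entirely.

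\textbf{Step 2: Boltzmann model and the count of missing primes.}
Use Fristedt's conditioning device. Take independent geometric variables $Z_k$ for odd $k\ge3$, with $\mathbb P(Z_k\ge1)=x^k$ and $x=e^{-\pi/(2\sqrt{3m})}$, where $m=n-j$. Conditioning on $\sum kZ_k=m$ recovers the uniform measure. The local limit theorem gives $\mathbb P(\sum kZ_k=m)\asymp m^{-3/4}$.

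For a prime $p\in\mathcal P_n$, the probability that $p$ is missing is $1-x^p=1-e^{-\pi p/(2\sqrt{3m})}$, which lies in $[\delta,1-\delta]$ for some $\delta>0$ depending only on $a,b$. By the prime number theorem, $|\mathcal P_n|\sim (b-a)\sqrt n/(\tfrac12\log n)$. So $R_n^{\mathrm{B}}=\sum_{p}\mathbf 1[Z_p\ge1]$ is a sum of independent Bernoullis with mean $\mu\asymp\sqrt n/\log n$. Chernoff then gives
\[
\mathbb P\left(R_n^{\mathrm B}<\mu/2\right)\le e^{-c'\sqrt n/\log n}.
\]
Dividing by $\mathbb P(\sum kZ_k=m)\asymp m^{-3/4}$ preserves superpolynomial decay. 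Summing over $j\le t$, or equivalently dividing by the conditioning probability from Step 1, still leaves $o(1)$. This shows that $R_n\ge\mu/2\gg 3t(n)+2$ with probability tending to $1$.

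\textbf{Main obstacle.}
The delicate part is the rigorous transfer from the Boltzmann model to exact conditioning. This requires a uniform local limit estimate for $\mathbb P(\sum kZ_k=m)$, uniform in $m\in[n-t,n]$ and valid with the odd-parts restriction (there are no parity obstructions, because $3$ and $5$ are parts). I would cite the standard saddle-point or Meinardus local limit theorem for restricted partitions rather than reprove it. Because the Chernoff bound decays like $e^{-c\sqrt n/\log n}$, any polynomial loss in this transfer is harmless.
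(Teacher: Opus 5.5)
Your proof is correct, but it takes a genuinely different route from the paper's. The paper also reduces, for each fixed $r=C_{1,n}\le t(n)$, to uniform partitions of $N=n-r$ into odd parts $\ge 3$. It then works with exact counts: deleting one copy of $p$ (resp.\ of $p$ and $q$) gives $\mathbb P(C_{p,n}>0\mid C_{1,n}=r)=q_{\ge3}(N-p)/q_{\ge3}(N)$ and the analogous two-prime formula. The ratio asymptotic of \Cref{lem:q-ge3-ratio} then gives a mean $\asymp\sqrt n/\log n$ and pairwise covariances $O(n^{-1/2})$, and Chebyshev finishes with failure probability $O(\log n/\sqrt n)$ uniformly in $r$.

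You instead use Fristedt's device to make the indicators $\mathbf{1}[Z_p\ge 1]$ independent, apply Chernoff, and pay only the factor $\mathbb P(\sum kZ_k=m)^{-1}=O(m^{3/4})$ to return to the exact model. This buys you three things:
\begin{itemize}
\item no covariance bookkeeping;
\item a failure probability of order $e^{-c\sqrt n/\log n}$, strong enough that even your first, unconditioned route works, since $\mathbb P(C_{1,n}\le t)\gg n^{-1/2}$;
\item weaker analytic input: you only use a polynomial (indeed $e^{-o(\sqrt n/\log n)}$) lower bound on the local probability. The paper's second-moment step instead needs $q_{\ge3}(N-s)/q_{\ge3}(N)$ to relative precision $1+o(1)$ at shifts $s\asymp\sqrt N$, so that the covariances are negligible against $|\mathcal P_n|^2\asymp n/(\log n)^2$.
\end{itemize}

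Your ``main obstacle'' can in fact be discharged from the paper's own input. We have $\mathbb P(\sum kZ_k=m)=q_{\ge3}(m)\,x^m(1-x)^{-1}\prod_{k\ \mathrm{odd}}(1-x^k)$. With $x=e^{-s}$ and $s=\pi/(2\sqrt{3m})$, the asymptotic of \Cref{lem:q-ge3-ratio} together with $\prod_{k\ \mathrm{odd}}(1-e^{-ks})\sim\sqrt2\,e^{-\pi^2/(12s)}$ gives $\asymp m^{-3/4}$ directly. Your $x$ is the saddle point for all odd parts rather than for parts $\ge3$, but that only shifts the mean of $\sum kZ_k$ by $O(\sqrt m)$, well inside the $m^{3/4}$ window.

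There is one slip in Step~1, which you then bypass. The correct growth is $q_{\ge3}(m)\asymp m^{-5/4}e^{\pi\sqrt{m/3}}$, not $m^{-1/4}e^{\pi\sqrt{m/3}}$: the factor $1-x$ costs $\asymp m^{-1/2}$ relative to odd partitions. The displayed chain there also does not parse as written. Your conclusion $\mathbb P(C_{1,n}\le t)\asymp (t+1)/\sqrt n$ is nonetheless correct.
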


\noindent We prove \Cref{prop:theta-prime-cycles} and \Cref{prop:odd-partition-prime-parts-small-fixed} in the following two subsections, but first we use them to prove \Cref{thm:asym}.

\begin{proof}[Proof of \Cref{thm:asym}, assuming \Cref{prop:theta-prime-cycles} and \Cref{prop:odd-partition-prime-parts-small-fixed}]
First we prove part 1.  Choose $\sigma\in S_n$ uniformly from the set of odd order elements. This is the distribution
$\mathbb P_{n,1}$. By Corollary \ref{cor:prime-cycle-obstruction}, every realizable odd order
$\sigma$ satisfies $P_n(\sigma)\le \max\{2,3C_{1,n}(\sigma)\}.$ But Proposition \ref{prop:theta-prime-cycles} with $\theta=1$ says that
\[\mathbb P_{n,1}\left(P_n(\sigma)\le \max\{2,3C_{1,n}(\sigma)\}\right)\to0.\]
Therefore the probability that a uniformly random odd order $\sigma\in S_n$ is realizable in
$\mathrm{Diff}^+(M_n)$ tends to $0$.

Now, we prove part 2. Let $\lambda$ be chosen uniformly among odd partitions of $n$ satisfying $C_{1,n}(\lambda)\le t(n).$
By Lemma~\ref{lem:HT-maximal-stabilizers}, every realizable odd order conjugacy class satisfies $\mu(\lambda)\le \max\{2,3C_{1,n}(\lambda)\}.$
Hence every realizable odd order conjugacy class satisfies $R_n(\lambda)\le \max\{2,3C_{1,n}(\lambda)\}$ because every prime part of size $p$ counted by $R_n(\lambda)$ contributes a distinct maximal proper stabilizer subgroup of $\pi$ of index $p$.  But Proposition~\ref{prop:odd-partition-prime-parts-small-fixed} says that, among odd partitions
with $C_{1,n}(\lambda)\le t(n)$, this inequality holds with probability tending to $0$. Therefore the
realizable proportion among such conjugacy classes tends to $0$.
\end{proof}

\subsection{Proving \Cref{prop:theta-prime-cycles}}
Let
\[
a_n^{(\theta)}
:=
\sum_{\substack{\sigma\in S_n\\ \sigma\text{ odd order}}}\theta^{c(\sigma)}.
\]
Then
\[
\sum_{n\ge0}\frac{a_n^{(\theta)}}{n!}x^n
=
\exp\left(\theta\sum_{\substack{k\ge1\\ k\text{ odd}}}\frac{x^k}{k}\right)
=
\left(\frac{1+x}{1-x}\right)^{\theta/2}.
\]
Let
\[
\alpha_n^{(\theta)}
:=
\frac{a_n^{(\theta)}}{n!}
=
[x^n]\left(\frac{1+x}{1-x}\right)^{\theta/2}.
\]
For $0\le s\le n$, define
\[
A_{n,s}^{(\theta)}
:=
(n)_s\frac{a_{n-s}^{(\theta)}}{a_n^{(\theta)}}.
\]
Since $a_n^{(\theta)}=n!\alpha_n^{(\theta)}$, this is equivalently
\[
A_{n,s}^{(\theta)}
=
\frac{\alpha_{n-s}^{(\theta)}}{\alpha_n^{(\theta)}}.
\]

\begin{lem}\label{lem:theta-coeff}
Uniformly for $\theta\in\{1,\frac12\}$, we have
\[
A_{n,s}^{(\theta)}
=
1+O(s/n)+O(n^{-\theta})
\]
for $0\le s\le n/2$. In particular, $A_{n,s}^{(\theta)}=O(1)$ for $0\le s\le n/2$. Moreover,
\[
A_{n,s}^{(\theta)}=O(n)
\]
for all $0\le s\le n$.
\end{lem}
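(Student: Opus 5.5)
The plan is to work directly with the coefficients of $F_\theta(x):=\left(\frac{1+x}{1-x}\right)^{\theta/2}=(1+x)^{\theta/2}(1-x)^{-\theta/2}$. The leading asymptotics of $\alpha_n^{(\theta)}$ come from singularity analysis at $x=1$, and the subleading term comes from the second singularity at $x=-1$.

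\textbf{Step 1: asymptotics of $\alpha_n^{(\theta)}$.} Near $x=1$ we have $F_\theta(x)=2^{\theta/2}(1-x)^{-\theta/2}\bigl(1+O(1-x)\bigr)$. By the standard transfer theorem (Flajolet--Odlyzko), applied on a $\Delta$-domain, this contributes
\[
\frac{2^{\theta/2}}{\Gamma(\theta/2)}\,n^{\theta/2-1}\bigl(1+O(1/n)\bigr).
\]
Near $x=-1$ we have $F_\theta(x)=2^{-\theta/2}(1+x)^{\theta/2}(1+O(1+x))$. This contributes $(-1)^n\,O(n^{-\theta/2-1})$; for $\theta=1$ the function $(1+x)^{1/2}$ is genuinely singular, and this term is nonzero. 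Since $F_\theta$ is analytic on the slit disk avoiding $\pm1$, we get
\[
\alpha_n^{(\theta)}=\frac{2^{\theta/2}}{\Gamma(\theta/2)}\,n^{\theta/2-1}\Bigl(1+O(n^{-1})+O(n^{-\theta})\Bigr),
\]
where the relative size of the $x=-1$ contribution is $n^{-\theta/2-1}/n^{\theta/2-1}=n^{-\theta}$. In particular $\alpha_k^{(\theta)}>0$ for every $k$, because $F_\theta$ has positive coefficients by its $\exp$ representation, and $\alpha_0=1$.

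\textbf{Step 2: the range $s\le n/2$.} Here $n-s\ge n/2$, so Step 1 applies to both numerator and denominator of the ratio:
\[
A_{n,s}^{(\theta)}=\Bigl(\frac{n-s}{n}\Bigr)^{\theta/2-1}\bigl(1+O(n^{-1})+O(n^{-\theta})\bigr).
\]
Since $s/n\le 1/2$, we have $(1-s/n)^{\theta/2-1}=1+O(s/n)$. Together these give the first claim, and boundedness follows immediately. Uniformity in $\theta$ is automatic because only two values of $\theta$ occur.

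\textbf{Step 3: all $0\le s\le n$.} We have $A_{n,s}^{(\theta)}=\alpha_{n-s}^{(\theta)}/\alpha_n^{(\theta)}$, and the denominator is $\asymp n^{\theta/2-1}\ge n^{-1}$ for $\theta\le1$. So it suffices to show that $\alpha_k^{(\theta)}$ is bounded uniformly in $k$. This follows from Step 1, since $\alpha_k\asymp k^{\theta/2-1}\to0$, together with finitely many small $k$. Alternatively, one can avoid Step 1 here: the coefficients of $(1-x)^{-\theta/2}$ are at most $1$ and those of $(1+x)^{\theta/2}$ are absolutely summable, so the convolution is bounded. This gives $A_{n,s}^{(\theta)}=O(n^{1-\theta/2})=O(n)$.

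The main obstacle is Step 1, specifically justifying the error term from the $x=-1$ singularity. This requires doing singularity analysis with two dominant singularities, and checking that the expansions at $x=1$ carry only integer powers of $(1-x)$ beyond the leading factor, so that the relative error there is $O(1/n)$. I would cite the multiple-singularity version of the transfer theorem (Flajolet--Sedgewick, Thm.~VI.5) rather than reprove it.
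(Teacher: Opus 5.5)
Your proposal is correct and follows essentially the same route as the paper. Both arguments do singularity analysis of $\bigl(\frac{1+x}{1-x}\bigr)^{\theta/2}$ at $x=1$ (main term) and $x=-1$ (relative error $O(n^{-\theta})$) to get $\alpha_n^{(\theta)}=\frac{2^{\theta/2}}{\Gamma(\theta/2)}n^{\theta/2-1}\bigl(1+O(n^{-\theta})\bigr)$, then take the ratio for $s\le n/2$. The only difference is cosmetic: for the $O(n)$ bound, the paper uses the trivial count $a_{n-s}^{(\theta)}\le (n-s)!$ (valid since $\theta\le 1$), i.e.\ $\alpha_{n-s}^{(\theta)}\le 1$, instead of your asymptotic or convolution bound.
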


\begin{proof}
Let $\gamma=\theta/2,$
so $\gamma\in\{\frac12,\frac14\}$. We estimate the coefficients of
\[F_\theta(x)=\left(\frac{1+x}{1-x}\right)^\gamma.\]
Near $x=1$,
\[F_\theta(x)=(1+x)^\gamma(1-x)^{-\gamma}=2^\gamma(1-x)^{-\gamma}+O\bigl((1-x)^{1-\gamma}\bigr).\]
The binomial coefficient estimate gives
\[[x^n](1-x)^{-\gamma}=\frac{\Gamma(n+\gamma)}{\Gamma(\gamma)\Gamma(n+1)}=\frac{1}{\Gamma(\gamma)}n^{\gamma-1}+O(n^{\gamma-2}).\]
The singularity at $x=-1$ contributes only $O(n^{-\gamma-1})$, since near $x=-1$ the factor
$(1-x)^{-\gamma}$ is analytic and nonzero while $(1+x)^\gamma$ has exponent $+\gamma$.
Therefore
\[\alpha_n^{(\theta)}=\frac{2^\gamma}{\Gamma(\gamma)}n^{\gamma-1}+O(n^{\gamma-2})+O(n^{-\gamma-1}).\]
Equivalently,
\[\alpha_n^{(\theta)}=\frac{2^\gamma}{\Gamma(\gamma)}n^{\gamma-1}\left(1+O(n^{-\theta})\right),\]
because $\theta=2\gamma$.  If $0\le s\le n/2$, then
\[A_{n,s}^{(\theta)}=\frac{\alpha_{n-s}^{(\theta)}}{\alpha_n^{(\theta)}}=\left(\frac{n-s}{n}\right)^{\gamma-1}\left(1+O(n^{-\theta})\right)=1+O(s/n)+O(n^{-\theta}).\]
Finally, for arbitrary $0\le s\le n$, note that $0<\theta\le1$, so $a_{n-s}^{(\theta)}\le (n-s)!$, and hence\ $\alpha_{n-s}^{(\theta)}\le1$.
Since $\alpha_n^{(\theta)}\gg n^{\gamma-1}$, we get
\[A_{n,s}^{(\theta)}=\frac{\alpha_{n-s}^{(\theta)}}{\alpha_n^{(\theta)}}\ll n^{1-\gamma}\ll n.\]
\end{proof}

\noindent Now let $k_1,\dots,k_r$ be distinct odd integers, and let $j_1,\dots,j_r\ge0$. Set $s=\sum_{i=1}^r j_i k_i.$
The usual factorial-moment count gives
\[
\mathbb E_{n,\theta}
\left[
\prod_{i=1}^r (C_{k_i,n})_{j_i}
\right]
=
\left(\prod_{i=1}^r\left(\frac{\theta}{k_i}\right)^{j_i}\right)
A_{n,s}^{(\theta)}.
\]
Indeed, choosing a specified $k_i$-cycle contributes the usual factor $1/k_i$, and now also
contributes the cycle-weight factor $\theta$.\\

\noindent First consider $C_{1,n}$. If $u_n\to\infty$, then for $n$ large we may assume $u_n\le n/2$.
Using
\[
1_{\{C_{1,n}\ge u_n\}}\le \frac{(C_{1,n})_{u_n}}{u_n!},
\]
we get
\[
\mathbb P_{n,\theta}(C_{1,n}\ge u_n)
\le
\frac{\mathbb E_{n,\theta}[(C_{1,n})_{u_n}]}{u_n!}
=
\frac{\theta^{u_n}A_{n,u_n}^{(\theta)}}{u_n!}
\to0.
\]
Thus $C_{1,n}$ is bounded above in probability. Now let
\[
\mathcal P_n=\{p\le(\log n)^2:\ p\text{ is an odd prime}\},
\]
and for $p\in\mathcal P_n$, define $I_p\coloneqq1_{\{C_{p,n}>0\}}$.
Then $P_n=\sum_{p\in\mathcal P_n}I_p.$

We first estimate $\mathbb P_{n,\theta}(C_{p,n}=0)$. Inclusion-exclusion and the factorial-moment
formula give
\[
\mathbb P_{n,\theta}(C_{p,n}=0)
=
\sum_{j=0}^{\lfloor n/p\rfloor}
(-1)^j
\frac{\theta^j A_{n,jp}^{(\theta)}}{p^j j!}.
\]
Comparing with
\[
e^{-\theta/p}
=
\sum_{j=0}^\infty
(-1)^j\frac{\theta^j}{p^j j!},
\]
and using Lemma \ref{lem:theta-coeff} in the range $jp\le n/2$, gives
\[
\mathbb P_{n,\theta}(C_{p,n}=0)
=
e^{-\theta/p}
+
O(n^{-\theta})
\]
uniformly for $p\in\mathcal P_n$. The tail $jp>n/2$ is negligible because $p\le(\log n)^2$,
while $j!$ grows faster than any power of $n$, and so the bound
$A_{n,jp}^{(\theta)}=O(n)$ suffices.\\

\noindent Similarly, for distinct $p,q\in\mathcal P_n$,
\[
\mathbb P_{n,\theta}(C_{p,n}=C_{q,n}=0)
=
e^{-\theta/p-\theta/q}
+
O(n^{-\theta}).
\]

\noindent Then
\[
\mathbb E_{n,\theta}[I_p]
=
1-e^{-\theta/p}+O(n^{-\theta})
=
\frac{\theta}{p}+O(p^{-2})+O(n^{-\theta}).
\]
Summing over $p\in\mathcal P_n$, Mertens's theorem gives
\[
\mathbb E_{n,\theta}[P_n]
=
\sum_{p\in\mathcal P_n}
\left(\frac{\theta}{p}+O(p^{-2})+O(n^{-\theta})\right)
=
\theta\log\log\log n+O(1).
\]
In particular, $\mathbb E_{n,\theta}[P_n]\to\infty.$ For the variance, since $I_p$ is an indicator,
\[
\sum_{p\in\mathcal P_n}\operatorname{Var}(I_p)
\le
\mathbb E_{n,\theta}[P_n].
\]
The two-variable estimate gives
\[
\operatorname{Cov}(I_p,I_q)=O(n^{-\theta})
\]
uniformly for distinct $p,q\in\mathcal P_n$. Thus
\[
\operatorname{Var}_{n,\theta}(P_n)
\le
\mathbb E_{n,\theta}[P_n]
+
O(|\mathcal P_n|^2 n^{-\theta})
=
O(\mathbb E_{n,\theta}[P_n]).
\]
By Chebyshev's inequality,
\[
\mathbb P_{n,\theta}
\left(
P_n\le \frac12\mathbb E_{n,\theta}[P_n]
\right)
\le
\frac{4\operatorname{Var}_{n,\theta}(P_n)}
{\mathbb E_{n,\theta}[P_n]^2}
\to0.
\]
Hence $P_n\to\infty$ in probability.\\

\noindent Finally, set
\[
u_n:=\left\lfloor \frac18\mathbb E_{n,\theta}[P_n]\right\rfloor.
\]
Then $u_n\to\infty$. With probability tending to $1$,
\[
C_{1,n}<u_n
\]
and
\[
P_n>\frac12\mathbb E_{n,\theta}[P_n].
\]
For large $n$,
\[
\frac12\mathbb E_{n,\theta}[P_n]>\max\{2,3u_n\}.
\]
Thus, with probability tending to $1$,
\[
P_n>\max\{2,3C_{1,n}\}.
\]
Equivalently,
\[
\mathbb P_{n,\theta}
\left(P_n\le \max\{2,3C_{1,n}\}\right)\to0.
\]

\subsection{Proving \Cref{prop:odd-partition-prime-parts-small-fixed}}

Let $q_{\ge3}(N)$ denote the number of partitions of $N$ into odd parts all of which are at
least $3$. Thus
\[
\sum_{N\ge0}q_{\ge3}(N)x^N
=
\prod_{\substack{j\ge3\\ j\text{ odd}}}\frac{1}{1-x^j}.
\]
We first record the coefficient estimate needed below.

\begin{lem}\label{lem:q-ge3-ratio}
Let $A=\pi/\sqrt3$. Then
\[
q_{\ge3}(N)
=
K N^{-5/4}e^{A\sqrt N}\left(1+O(N^{-1/2})\right)
\]
for some constant $K>0$. Consequently, for every fixed $B>0$, uniformly for
$0\le s\le B\sqrt N$,
\[
\frac{q_{\ge3}(N-s)}{q_{\ge3}(N)}
=
\exp\left(-\frac{A s}{2\sqrt N}\right)
\left(1+O_B(N^{-1/2})\right).
\]
\end{lem}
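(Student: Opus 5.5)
We will obtain the asymptotic for $q_{\ge3}(N)$ from the classical asymptotic for partitions into odd parts, and then read off the ratio estimate by a direct Taylor expansion.

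\textbf{Step 1: reduce to partitions into odd parts.} Let $q(N)$ be the number of partitions of $N$ into odd parts, so that $\sum_N q(N)x^N=\prod_{j\text{ odd}}(1-x^j)^{-1}$. Removing the factor for $j=1$ gives
\[
\sum_N q_{\ge3}(N)x^N=(1-x)\sum_N q(N)x^N,
\]
hence $q_{\ge3}(N)=q(N)-q(N-1)$.

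\textbf{Step 2: the input asymptotic.} By Euler's theorem, $q(N)$ equals the number of partitions into distinct parts. The Hardy--Ramanujan/Meinardus asymptotic for the latter, in its refined form with a relative error term, reads
\[
q(N)=\frac{1}{4\cdot 3^{1/4}}N^{-3/4}e^{A\sqrt N}\bigl(1+O(N^{-1/2})\bigr),\qquad A=\pi/\sqrt3 .
\]
The Rademacher-type convergent series (Hua's formula) gives this asymptotic with a full expansion in powers of $N^{-1/2}$.

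\textbf{Step 3: take the first difference.} This is the main obstacle. A difference of two asymptotics with $O(N^{-1/2})$ relative error does not by itself yield the derivative to the required accuracy. I would therefore apply Meinardus' saddle-point method directly to $F(x)=\prod_{j\ge3\text{ odd}}(1-x^j)^{-1}$, which avoids subtraction altogether.
\begin{itemize}
\item Near $x=e^{-t}$, $t\to0^+$, Mellin analysis of $\log F$ gives $\log F(e^{-t})=\frac{\pi^2}{12t}+\log t+c+O(t)$. The Dirichlet series is $(1-2^{-s})\zeta(s)-1$, with a simple pole at $s=1$; the value at $s=0$ changes from $0$ to $-1$, and this is exactly the source of the extra $\log t$.
\item Relative to the odd-parts case, the extra factor $t$ in $F$ shifts the power of $N$ by $-1/2$, giving $N^{-3/4-1/2}=N^{-5/4}$.
\item The minor arcs are handled as in Meinardus' theorem, since the Dirichlet series is analytic on $\Re s>0$ apart from the pole at $1$. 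This yields the stated asymptotic with error $O(N^{-1/2})$.
\end{itemize}

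As a cross-check, the derivative heuristic gives $q(N)-q(N-1)\approx q(N)\cdot\frac{A}{2\sqrt N}$, which produces $K=\frac{A}{8\cdot3^{1/4}}$.

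\textbf{Step 4: the ratio estimate.} For $0\le s\le B\sqrt N$ we have $(1-s/N)^{-5/4}=1+O_B(N^{-1/2})$. Moreover,
\[
A\sqrt{N-s}-A\sqrt N=-\frac{As}{2\sqrt N}+O\!\left(\frac{s^2}{N^{3/2}}\right)=-\frac{As}{2\sqrt N}+O_B(N^{-1/2}).
\]
The two $(1+O(\cdot))$ error factors at $N$ and $N-s$ are both $O_B(N^{-1/2})$, because $N-s\asymp N$. Multiplying these estimates gives the claimed uniform ratio.
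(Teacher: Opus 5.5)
Your plan differs from the paper's only in how you get the main asymptotic. The paper also writes $q_{\ge3}(N)=q_{\mathrm{odd}}(N)-q_{\mathrm{odd}}(N-1)$ and starts from the one-term odd-parts asymptotic. It then simply asserts $q_{\mathrm{odd}}(N-1)/q_{\mathrm{odd}}(N)=1-\frac{A}{2\sqrt N}+O(N^{-1})$ and multiplies out, which gives $K=K_0A/2$. Its derivation of the ratio estimate is the same as your Step 4.

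Your diagnosis of the subtraction is correct. From relative error $O(N^{-1/2})$ alone, the quotient of the error factors at $N-1$ and $N$ is $1+O(N^{-1/2})$, the same order as $A/(2\sqrt N)$. So the paper's ratio needs a sharper input than the asymptotic it displays. Your Mellin computation is also right: $D(s)=(1-2^{-s})\zeta(s)-1$ has $D(0)=-1$ and $D'(0)=-\tfrac12\log 2$. This gives the exponent $-5/4$ and $K=\pi/(8\cdot 3^{3/4})=A/(8\cdot 3^{1/4})=K_0A/2$.

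The weak point is your claim that Meinardus' method delivers relative error $O(N^{-1/2})$. In its standard form (e.g.\ Andrews, \emph{The Theory of Partitions}, Thm.~6.2), Meinardus' theorem gives only $O(N^{-\kappa_1})$ with $\kappa_1=\tfrac12\min\bigl(C_0-\tfrac{\delta}{4},\tfrac12-\delta\bigr)<\tfrac14$ when $\alpha=1$, where $0<C_0<1$ and $\delta>0$ are the theorem's parameters. Citing it therefore does not prove the lemma as stated.

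To reach $O(N^{-1/2})$ on your route, you must carry the major-arc expansion one order further. With $\theta=t^{3/2}u$, the cubic term of $\pi^2/(12\tau)$ and the linear term of $\log\tau$ have size $t^{1/2}$ and are odd in $u$, so they integrate to zero against the Gaussian. Their squares and products, the quartic term, and the $O(|\tau|)$ remainder in $\log F$ all contribute at relative order $t\asymp N^{-1/2}$. This is standard, but it is real work you have not done. Separately, the minor-arc input you need is a bound on $|F(e^{-t-i\theta})|/F(e^{-t})$ away from $\theta=0$, not analyticity of the Dirichlet series.

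The cheaper repair is already in your Step 2. Hua's formula gives $q(N)=K_0N^{-3/4}e^{A\sqrt N}\bigl(1+c_1N^{-1/2}+O(N^{-1})\bigr)$. With this two-term form, the bracket factors at $N-1$ and $N$ have quotient $1+O(N^{-1})$. Hence $q(N-1)/q(N)=1-\frac{A}{2\sqrt N}+O(N^{-1})$, and the subtraction goes through. That is the paper's argument with its hidden input made explicit, and it makes the saddle-point detour unnecessary.
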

\begin{proof}
Let $q_{\mathrm{odd}}(N)$ denote the number of partitions of $N$ into odd parts. The classical
asymptotic for odd partitions gives
\[
q_{\mathrm{odd}}(N)
=
K_0 N^{-3/4}e^{A\sqrt N}\left(1+O(N^{-1/2})\right),
\qquad A=\pi/\sqrt3.
\]
Since excluding parts of size $1$ corresponds to multiplying the generating function by $1-x$,
we have
\[
q_{\ge3}(N)=q_{\mathrm{odd}}(N)-q_{\mathrm{odd}}(N-1).
\]
Moreover,
\[
\frac{q_{\mathrm{odd}}(N-1)}{q_{\mathrm{odd}}(N)}
=
1-\frac{A}{2\sqrt N}+O(N^{-1}).
\]
Hence
\[
q_{\ge3}(N)
=
K N^{-5/4}e^{A\sqrt N}\left(1+O(N^{-1/2})\right)
\]
for some $K>0$.

Now let $0\le s\le B\sqrt N$. Then
\[
\frac{q_{\ge3}(N-s)}{q_{\ge3}(N)}
=
\left(\frac{N-s}{N}\right)^{-5/4}
\exp\left(A(\sqrt{N-s}-\sqrt N)\right)
\left(1+O_B(N^{-1/2})\right).
\]
Since
\[
\left(\frac{N-s}{N}\right)^{-5/4}=1+O_B(N^{-1/2})
\]
and
\[
\sqrt{N-s}-\sqrt N
=
-\frac{s}{2\sqrt N}+O_B(N^{-1/2}),
\]
we get
\[
\frac{q_{\ge3}(N-s)}{q_{\ge3}(N)}
=
\exp\left(-\frac{A s}{2\sqrt N}\right)
\left(1+O_B(N^{-1/2})\right).
\]
\end{proof}

\noindent We now prove \Cref{prop:odd-partition-prime-parts-small-fixed}. Fix $0\le r\le t(n)$, and condition on the event $C_{1,n}(\lambda)=r.$
After deleting the $r$ parts of size $1$, the remaining partition is uniformly distributed among
partitions of $N=n-r$ into odd parts all of which are at least $3$. Since
\[
r\le t(n)=o\left(\frac{\sqrt n}{\log n}\right),
\]
we have $N\sim n$, uniformly for $0\le r\le t(n)$.

Let $X_{n,r}$ denote the random variable $R_n(\lambda)$ under this conditional distribution.
For $p\in\mathcal P_n$, the probability that $p$ appears as a part is
\[
\mathbb P(C_{p,n}(\lambda)>0\mid C_{1,n}(\lambda)=r)
=
\frac{q_{\ge3}(N-p)}{q_{\ge3}(N)}.
\]
By Lemma~\ref{lem:q-ge3-ratio}, uniformly for $p\in\mathcal P_n$ and $0\le r\le t(n)$,
\[
\frac{q_{\ge3}(N-p)}{q_{\ge3}(N)}
=
\exp\left(-\frac{A p}{2\sqrt N}\right)
\left(1+O(n^{-1/2})\right).
\]
Since $a\sqrt n\le p\le b\sqrt n$ and $N\sim n$, this probability is bounded above and below
by positive constants depending only on $a,b$. Therefore, by the prime number theorem,
\[
\mathbb E[X_{n,r}]
=
\sum_{p\in\mathcal P_n}
\frac{q_{\ge3}(N-p)}{q_{\ge3}(N)}
\asymp
\#\mathcal P_n
\asymp
\frac{\sqrt n}{\log n},
\]
uniformly for $0\le r\le t(n)$.

Next we estimate the variance. For distinct $p,q\in\mathcal P_n$,
\[
\mathbb P(C_{p,n}(\lambda)>0,\ C_{q,n}(\lambda)>0\mid C_{1,n}(\lambda)=r)
=
\frac{q_{\ge3}(N-p-q)}{q_{\ge3}(N)}.
\]
Again by Lemma~\ref{lem:q-ge3-ratio},
\[
\frac{q_{\ge3}(N-p-q)}{q_{\ge3}(N)}
=
\exp\left(-\frac{A(p+q)}{2\sqrt N}\right)
\left(1+O(n^{-1/2})\right),
\]
while
\[
\frac{q_{\ge3}(N-p)}{q_{\ge3}(N)}
\frac{q_{\ge3}(N-q)}{q_{\ge3}(N)}
=
\exp\left(-\frac{A(p+q)}{2\sqrt N}\right)
\left(1+O(n^{-1/2})\right).
\]
Thus, uniformly for distinct $p,q\in\mathcal P_n$,
\[
\operatorname{Cov}
\left(
1_{\{C_{p,n}>0\}},
1_{\{C_{q,n}>0\}}
\mid C_{1,n}=r
\right)
=
O(n^{-1/2}).
\]
Since the variables $1_{\{C_{p,n}>0\}}$ are indicators,
\[
\sum_{p\in\mathcal P_n}
\operatorname{Var}(1_{\{C_{p,n}>0\}}\mid C_{1,n}=r)
\le
\mathbb E[X_{n,r}].
\]
Therefore
\[
\operatorname{Var}(X_{n,r})
\le
\mathbb E[X_{n,r}]
+
O\left((\#\mathcal P_n)^2n^{-1/2}\right).
\]
Using
\[
\#\mathcal P_n\asymp\frac{\sqrt n}{\log n},
\]
we obtain
\[
\operatorname{Var}(X_{n,r})
=
O\left(\frac{\sqrt n}{\log n}\right)
+
O\left(\frac{\sqrt n}{(\log n)^2}\right)
=
O\left(\frac{\sqrt n}{\log n}\right),
\]
uniformly for $0\le r\le t(n)$. Since
\[
\mathbb E[X_{n,r}]\asymp\frac{\sqrt n}{\log n},
\]
Chebyshev's inequality gives
\[
\mathbb P\left(
X_{n,r}\le \frac12\mathbb E[X_{n,r}]
\mid C_{1,n}=r
\right)\to0
\]
uniformly for $0\le r\le t(n)$. Hence, uniformly for $0\le r\le t(n)$,
\[
X_{n,r}\gg \frac{\sqrt n}{\log n}
\]
with probability tending to $1$.

Since
\[
t(n)=o\left(\frac{\sqrt n}{\log n}\right),
\]
we get, uniformly for $0\le r\le t(n)$,
\[
\mathbb P\left(
X_{n,r}\le \max\{2,3t(n)\}
\mid C_{1,n}=r
\right)\to0.
\]
Averaging over $0\le r\le t(n)$, this gives
\[
\mathbb P\left(
R_n(\lambda)\le \max\{2,3t(n)\}
\ \middle|\ 
C_{1,n}(\lambda)\le t(n)
\right)\to0.
\]
Since $C_{1,n}(\lambda)\le t(n)$ under the conditioning, we have
\[
\max\{2,3C_{1,n}(\lambda)\}\le \max\{2,3t(n)\}.
\]
Therefore
\[
\mathbb P\left(
R_n(\lambda)\le \max\{2,3C_{1,n}(\lambda)\}
\ \middle|\ 
C_{1,n}(\lambda)\le t(n)
\right)\to0.
\]

\subsection{Proving \Cref{thm:2groups}: 2-subgroups of $S_n<O(H_{M_n})$}\label{sec:2group}
In this subsection, let $G\leq S_n< O(H_{M_n})$ with $\abs{G}=2^m$.

\begin{prop}\label{prop:2-groups}
    If $G\leq S_n< O(H_{M_n})$ with $\abs{G}=2^m$ is realizable in $\diff^+(M_n)$, then there is a subgroup $H\leq G$ such that $\log_2\abs{H}\geq m-\log_2(n)$ and either \begin{enumerate}
        \item $H$ embeds into $SO(4)$
        \item there is an exact sequence \[1\to K\to H\to L\to 1\] with $K$ cyclic, and $L$ embeds into $O(3)$.
    \end{enumerate}If $n$ is odd, then $G$ embeds into $SO(4)$.
\end{prop}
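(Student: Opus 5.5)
The plan is to take a realization $\tilde G\leq\diff^+(M_n)$ of $G$ and look at the fixed set of a single central involution. Since $\tilde G$ is a nontrivial $2$-group, its center contains an involution $f$. Then $f_*=\phi\in S_n$ is a permutation with $a$ fixed basis vectors and $r$ transpositions, so $a+2r=n$. In the language of \Cref{prop:ed}, each fixed basis vector gives a trivial summand and each transposition gives a regular summand $\sigma$. Hence $t=a$ and $c=0$. Then $\chi(Fix(f))=a+2>0$, so $Fix(f)\neq\emptyset$ and part (1) of \Cref{prop:ed} applies, giving
\[\beta_1(Fix(f))=0,\qquad \beta_0(Fix(f))+\beta_2(Fix(f))=a+2.\]
Since $\beta_1=0$ with $\F_2$ coefficients, every surface component is a $2$-sphere. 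So $Fix(f)$ consists of $k$ isolated points and $s$ spheres, with $k+2s=a+2\leq n+2$.

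Because $f$ is central, $\tilde G$ preserves $Fix(f)$ and permutes its components. It preserves the set of $k$ isolated points and the set of $s$ spheres separately. Every orbit has size a power of $2$. I would pick the smallest orbit $O$ inside one of these two sets and take $H$ to be the stabilizer of a component in $O$. Then $\log_2\abs{H}=m-\log_2\abs{O}$, so it suffices to show $\abs{O}\leq n$.
\begin{itemize}
\item If $s>0$, choose $O$ among the spheres; then $\abs{O}\leq s\leq (n+2)/2\leq n$ for $n\geq 2$.
\item If $s=0$ and $a<n$, then $\abs{O}\leq k=a+2\leq n$.
\item The remaining boundary case is $s=0$ and $a=n$, so that $f$ acts trivially on homology and $k=n+2$. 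This is the step I expect to be the main obstacle, since the smallest orbit could have size exactly $n+2$ when $n+2$ is a power of $2$. I would first try to choose a different central involution. If every central involution acts trivially on homology, I would instead use \Cref{lem:rankorbitsize}-style divisibility: a power of $2$ dividing $n+2$ is at most $2^{v_2(n+2)}$. Failing that, I would pass to the index-$2$ subgroup of $\tilde G$ stabilizing a pair of points, which loses at most one in the bound.
\end{itemize}

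Next I identify $H$ in each case. If the component is an isolated point $p$, then averaging a metric gives a faithful tangential representation $H\into SO(T_pM_n)\cong SO(4)$, because a finite-order isometry is determined by its action at a point and a frame; this is alternative (1). If the component is a sphere $S$, let $K\leq H$ be the subgroup fixing $S$ pointwise. By the same frame argument used in the proof of \Cref{prop:fullthm}, $K$ acts faithfully on an oriented normal $2$-plane, so $K\into SO(2)$ is cyclic. The quotient $L=H/K$ acts faithfully on $S\cong S^2$, and after uniformization it acts by isometries of the round metric. Possibly $L$ reverses orientation of $S$, so $L\into O(3)$. This gives the exact sequence in alternative (2).

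Finally, suppose $n$ is odd. Then $a\equiv n\pmod 2$ is odd, so $k=a+2-2s$ is odd. The $2$-group $\tilde G$ acts on the $k$ isolated points, and an odd number of points must contain an orbit of odd size, hence of size $1$. That point is a global fixed point of $\tilde G$. The tangential representation there embeds all of $G$ into $SO(4)$, which proves the last claim.
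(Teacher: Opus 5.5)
Your route is the paper's route: take a central involution $f$, use \Cref{prop:ed} to see that $Fix(f)$ is isolated points and $2$-spheres, pass to the stabilizer of a component in a small orbit, and read off either a faithful tangential representation into $SO(4)$ or a cyclic pointwise stabilizer with quotient in $O(3)$. Your odd-$n$ argument (an odd number of isolated points, hence a global fixed point) is also the paper's. Two of your steps are slight improvements. Using $\chi(Fix(f))=a+2>0$ directly replaces the paper's separate argument that no involution acts freely. Your explicit count $k+2s=a+2$ is more careful than the paper's bare assertion that $k,s\le n$.

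There is one gap, in your third bullet: you leave that case open, and none of your proposed remedies closes it.
\begin{itemize}
\item $Z(\tilde G)$ may contain only one involution, so there may be no other central involution to choose.
\item When $n+2$ is a power of $2$, the $2$-adic bound gives exactly $n+2>n$.
\item The stabilizer of a pair of points need not fix a point. The further index-$2$ step that fixes one only yields $\log_2\abs{H}\geq m-\log_2(n+2)$.
\end{itemize}

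The fix is that this case is vacuous. Realizability means $\pi|_{\tilde G}:\tilde G\to G$ is an isomorphism, so $f\neq 1$ forces $f_*=\phi\neq 1$. A nontrivial involution in $S_n$ has $r\geq 1$ transpositions, so $a\leq n-2$ and $k\leq a+2\leq n$, whether or not $s=0$.

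Alternatively, apply \Cref{thm:gsig} with $p=2$. All point defects vanish, giving $a=2(a+r)-n=\sum_C C\cdot C$, so $s=0$ forces $a=0$.

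With the third bullet replaced by this observation, your proof is complete. The same observation is also what justifies the paper's unproved claim that $k\le n$.
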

\begin{proof}
    Assume $G$ is realizable in $\diff^+(M_n)$.  First, no order 2 element $\phi\in G$ has a realizing representative $f\in\diff^+(M_n)$ which acts freely on $M_n$, for any $n\geq 1$. For contradiction, assume $\phi\in G$ has a realization $f\in\diff^+(M_n)$ acting freely on $M_n$ with $\abs{\phi}=2$.  Let $k$ denote the number of 2-cycles in $\phi$ as a permutation.  In particular, $\dim H_2(M_n;\R)^{\ang{\phi}}=n-k$.  Then the $G$-signature theorem, \Cref{thm:gsig}, gives \[n-2k=2(n-k)-n=\sum_z\de_z+\sum_C\de_C=0.\]  Thus $n$ must be even, and $\phi$ must have a cycle type of only 2-cycles.  But now in the language of \Cref{prop:ed}, $\chi(M_n^{\ang{f}})=t-c+2=0-0+2\neq0$, and so $M_n^{\ang{f}}$ cannot be empty.

    Since $G$ is a $2$-group, it has nontrivial center. Let $\phi\in Z(G)$ have order $2$.  Since no realization $f\in\diff^+(M_n)$ of $\phi$ acts freely, by \Cref{prop:ed}, $M^{\ang{f}}$ consists of $k$ isolated fixed points and $s$ 2-spheres.  If $k>0$, then by \Cref{lem:rankorbitsize}, there is a subgroup $H\leq G$ with \[\log_2\abs{H}\geq m-v_2(k)\geq  m-\log_2(k)\] such that $H$ fixes a point $p$ and hence embeds into $SO(4)$.  Similarly, if $s>0$, then there is a subgroup $H\leq G$ with \[\log_2\abs{H}\geq m-v_2(s)\geq  m-\log_2(s)\] such that $H$ fixes some 2-sphere, setwise.  The kernel $K$ of the action of $H$ on this sphere $S$ can have at most one element of order 2:  if two involutions each act fixing a sphere $S$ pointwise, they must each act on the normal bundle $NS\subseteq TM_n|_S$ by $(-1)$, and hence they must be equal.  Since $K$ is a 2-group, it must then be cyclic since it embeds into $SO(2)$.  Observe that $L$ embeds into $O(3)$ by uniformization.  To conclude the first claim, note that $k,s\leq n$.

    If $n$ is odd, then $n-2k$ is also odd, and so $f$ fixes an isolated point.  The result follows.
\end{proof}

\begin{lem}\label{lem:2groups}
    If $G\leq S_n< O(H_{M_n})$ with $\abs{G}=2^m$ is realizable in $\diff^+(M_n)$, then $G$ has a generating set with at most $\lceil \log_2 n\rceil+5$ elements.
\end{lem}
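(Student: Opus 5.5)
We derive the lemma from \Cref{prop:2-groups} together with the subadditivity of the minimal number of generators $d(\cdot)$ under extensions. Suppose $G\le S_n$ with $\abs{G}=2^m$ is realizable. \Cref{prop:2-groups} gives a subgroup $H\le G$ with $\log_2\abs{H}\ge m-\log_2 n$, and $H$ either embeds in $SO(4)$ or is an extension $1\to K\to H\to L\to 1$ with $K$ cyclic and $L\hookrightarrow O(3)$. The argument has two steps. First, bound $d(H)$ by a small absolute constant. Second, bound the number of extra generators needed to pass from $H$ to $G$ by $\lceil\log_2 n\rceil$.

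\textbf{Step 1: bounding $d(H)$.} In the extension case, $d(H)\le d(K)+d(L)=1+d(L)$. A finite $2$-subgroup of $O(3)$ has the form $L_0$ or $L_0\times\{\pm I\}$, where $L_0$ is a $2$-subgroup of $SO(3)$. Such $L_0$ is cyclic, dihedral, or (as the Klein four subgroup) dihedral, so $d(L_0)\le 2$. Hence $d(L)\le 3$ and $d(H)\le 4$. In the $SO(4)$ case, a finite $2$-subgroup $H\le SO(4)$ has a preimage in $S^3\times S^3$ that is a subgroup of $\tilde H_1\times\tilde H_2$. Each $\tilde H_i$ is a $2$-subgroup of $S^3$, hence cyclic or generalized quaternion, and is $2$-generated. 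So the preimage $\tilde H$ is a subgroup of a product of two $2$-generated $2$-groups of this special type, and we must bound $d(\tilde H)$. This is the step I expect to be the main obstacle, since subgroups of direct products need not inherit small generation. I would use Goursat's lemma: $\tilde H$ is a fibre product $\tilde A\times_Q\tilde B$ with $\tilde A\le\tilde H_1$ and $\tilde B\le\tilde H_2$, and all of these are again cyclic or quaternion. Then $d(\tilde H)\le d(\tilde A)+d(\ker(\tilde H\to\tilde A))\le 2+2=4$, because the kernel is a subgroup of $\tilde B$, which is cyclic or quaternion and so $2$-generated. Passing to the quotient by $\pm1$ gives $d(H)\le 4\le 5$. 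The constant $5$ in the statement leaves slack here, so a cruder estimate also suffices.

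\textbf{Step 2: from $H$ to $G$.} Let $r=m-\log_2\abs{H}\le\log_2 n$, and since $r$ is an integer, $r\le\lfloor\log_2 n\rfloor\le\lceil\log_2 n\rceil$. In a $p$-group every subgroup lies in a subnormal chain $H=H_0<H_1<\dots<H_r=G$ with each index equal to $2$. Each step adds at most one generator, because $H_{i+1}=\langle H_i,x\rangle$ for any $x\notin H_i$. Therefore
\[d(G)\le d(H)+r\le 5+\lceil\log_2 n\rceil.\]
Equivalently, one can argue in the Frattini quotient: $\F_2$-rank is at most the rank contributed by the image of $H$ plus the $r$ extra dimensions.

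Step 2 is formal. The only real content is the uniform bound on $d(H)$ in the $SO(4)$ case of Step 1, which I would carry out through the Goursat description of the preimage in $S^3\times S^3$.
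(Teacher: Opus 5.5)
Your proof is correct and follows the paper's route: invoke \Cref{prop:2-groups}, bound $d(H)\le 4$ using the classification of finite $2$-subgroups of $O(3)$ and $SO(4)$ (the paper just cites the classification via $SU(2)$, where you spell out the extension argument for the preimage in $S^3\times S^3$), and then add one generator per index-$2$ step of a chain from $H$ to $G$. The only difference is bookkeeping: the paper works with the weaker bound $[G:H]\le 2n$, so its chain has length at most $1+\log_2 n$, which is where the constant $5=4+1$ comes from, while your count gives $\lceil\log_2 n\rceil+4$.
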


\begin{proof}
    By \Cref{prop:2-groups}, there is a subgroup $H\leq G$ such that $\log_2\abs{H}\geq m-\log_2(n)-1$ and either $H$ embeds either into $SO(4)$, or $H/K$ for $K$ cyclic embeds into $O(3)$.  In particular, $[G:H]\leq 2n$.  By the classical classifications of finite subgroups of $O(3)$ and $SO(4)$ (for example, via its universal cover $SU(2)$), $H$ is then generated in either case by at most 4 elements.  Since $H$ and $G$ are 2-groups, inclusion $H\leq G$ can be refined to a chain of inclusions \[H\leq H_1\leq\dots\leq H_r=G,\] where $[H_{i+1}:H_i]=2$, and where \[r=\log_2[G:H]\leq \log_2(2n)=1+\log_2 n.\]  Each successive group $H_i$ can be generated by adding one additional element to a generating set for $H_{i-1}$.  Then $G$ is generated by at most $\lceil \log_2 n\rceil+5$ elements.
\end{proof}

\begin{proof}[Proof of \Cref{thm:2groups}]
    We prove the first part, about 2-groups of $S_n< O(H_{M_n})$.  Let $A_n$ denote the total number of 2-subgroups of $S_n$. Let $k_n=\lfloor n/2\rfloor$, and let $E_n$ denote the number of subgroups of some maximal elementary abelian 2-group $C_2^{k_n}\leq S_n$.  Such subgroups correspond to $\F_2$-vector spaces of $\F_2^{k_n}$.  If $k_n=2\ell$ for $\ell\in\Z$, let $a=b=\ell$.  If $k_n=2\ell+1$, let $a=\ell$ and $b=\ell+1$.  Then the graphs of distinct linear maps $\F_2^a\to \F_2^b$ give distinct subspaces of $\F_2^{k_n}$.  There are $2^{ab}$ such maps and $ab=\floor{k_n^2/4}$, and so \[\log_2 A_n\geq \log_2 E_n\geq \floor{\frac{k_n^2}{4}}=\frac{n^2}{16}+O(n).\]
        
    Let $B_n$ denote the total number of 2-groups which are generated by a set of size at most $\log_2(n)+5$. \Cref{lem:2groups} implies that any 2-group in $S_n< O(H_{M_n})$ which is realizable in $\diff^+(M_n)$ fits this criterion. Then $B_n$ is bounded from above by the number of ordered $(\lceil \log_2(n)\rceil+5)$-tuples of elements in $S_n$, which is $(n!)^{\lceil \log_2(n)\rceil+5}$.  Then $\log_2 B_n\leq (\lceil \log_2(n)\rceil+5)\log_2(n!)=O(n(\log n)^2)$ by Stirling's approximation.  Comparing bounds on $A_n$ and $B_n$ gives $B_n/A_n=O(e^{-cn^2})$ for some constant $c>0$.\\

    Now we prove the second part, about conjugacy classes.  Let $C_n$ denote the total number of conjugacy classes of 2-subgroups of $S_n$, and let $D_n$ denote the total number of conjugacy classes of 2-groups which are generated by at most $\log_2(n)+5$ elements. It suffices to show that $D_n/C_n\to 0$.  Simply observe that $\log D_n\leq \log B_n= O(n(\log n)^2)$, and $\log C_n\geq \log(A_n/n!)\geq c'n^2-O(n\log n)$ for some constant $c'$.  The result follows.

\end{proof}

\begin{rmk}
    There are precisely two non-conjugate subgroups of $O(H_{M_n})$ abstractly isomorphic to the symmetric group $S_n$.  Along with the standard copy made up of the usual $n$-by-$n$ permutation matrices, there is also the group of signed permutation matrices $H=\{(-1)^{sgn(\sigma)}\sigma\}_{\sigma\in S_n\leq O(H_{M_n})}$ for $\sigma\in S_n\leq  O(H_{M_n})$ a totally positive permutation matrix and $sgn(\sigma)$ its sign as a permutation.  Since the map $\pi:\diff^+(M_n)\to O(H_{M_n})$ is surjective for all $n\geq 1$, then Nielsen realization is a conjugacy invariant in $O(H_{M_n})$.  Furthermore, the odd order elements of $H$ coincide precisely with the odd order elements in the standard copy of $S_n\leq O(H_{M_n})$.  In particular, \Cref{thm:asym} and \Cref{thm:2groups} hold equally true considering $H$ instead of $S_n\leq O(H_{M_n})$, and hence they hold for any subgroup of $O(H_{M_n})$ abstractly isomorphic to $S_n$.
\end{rmk}

\subsection{Other realizations and obstructions in $S_n<O(H_{M_n})$}\label{sec:largeSn}

First, considering regular polytopes proves the following proposition.
\begin{prop}\label{prop:polygon}
    In the following, any embedding $S_k\times S_\ell\leq S_n<O(H_{M_n})$ is taken to be the permutations on disjoint sets of $k$ and $\ell$ out of $n$ letters.  The inclusion $D_k\leq S_k$ is also taken to be the standard embedding. \begin{enumerate} 
        \item The product of dihedral groups $D_k\times D_\ell\leq S_k\times S_\ell\leq S_{k+\ell}$ of order $4k\ell$ is realizable in $\diff^+(M_{k+\ell})$.  The product of cyclic groups $C_k\times C_\ell\leq S_k\times S_\ell\leq S_{k+\ell}\leq S_n<O(H_{M_n})$ is realizable in $\diff^+(M_n)$  for all $n\geq k+\ell$.
        \item Let $G$ be the isometry group of a platonic solid or the orientation-preserving isometries of any regular 4-dimensional polytope, embedded as a subgroup of the permutations of its $k$ vertices $G\leq S_k\leq S_n<O(H_{M_n})$.  Then $G$ is realizable in $\diff^+(M_n)$.
        
        In the case of a platonic solid, let $H$ be its orientation-preserving isometries.  Then for any $\ell>0$, the group $H\times C_\ell\leq S_k\times S_\ell\leq S_{k+\ell}$ is realizable in $\diff^+(M_{k+\ell})$.  
        
        The group $K$ of all isometries of a regular 4-dimensional polytope with $k$ vertices is realizable in $M_k$, as is the group of all orientation-preserving isometries of a 5-dimensional polytope.
        \item  Let $G$ be the orientation-preserving symmetry group of a regular 5-cube or 5-orthoplex, embedded into the permutations $S_n$ of its $n$ vertices, and hence into $O(H_{M_n})$ under the natural embedding.  Then the group $G$ is realizable in $\diff^+(M_n)$. In particular, $n=32$ or $n=10$ in this setting.
        \item The group $S_n<O(H_{M_n})$ is realizable in $\diff^+(M_n)$ for all $n\leq 5$.  The group $A_6$ is realizable in $\diff^+(M_6)$.
    \end{enumerate}
\end{prop}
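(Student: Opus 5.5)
The uniform strategy is to realize each group by an equivariant connected sum over a finite group action on $S^4$. Start with a finite subgroup $\Gamma\leq SO(5)$ acting linearly on $S^4\subset\R^5$, and pick a point $p\in S^4$ whose orbit $\Gamma p$ has exactly $k$ points. Choose a small $\mathrm{Stab}_\Gamma(p)$-invariant ball $B_p$ and a copy of $\cp^2$ carrying an action of $\mathrm{Stab}_\Gamma(p)$ by orientation-preserving diffeomorphisms. This action should fix a point $q$, act trivially on $H_2(\cp^2;\Z)$, and have tangential representation at $q$ isomorphic, by an orientation-reversing isometry, to the representation of $\mathrm{Stab}_\Gamma(p)$ on $T_pS^4$. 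Then glue $\cp^2$ equivariantly at every point of the orbit, as in \Cref{lem:ser} but with $\Gamma$-translates. The result is $\#_k\cp^2=M_k$ with $\Gamma$ acting faithfully and permuting the hyperplane classes exactly as $\Gamma$ permutes $\Gamma p$. To reach $M_n$ for larger $n$, attach further copies of $\cp^2$ at points of other orbits, or at fixed points of the whole group.

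\textbf{Part 2 (platonic solids and regular 4-polytopes).} Embed $\R^3$ or $\R^4$ in $\R^5$ and let $\Gamma$ be the symmetry group, acting through $SO(5)$. Orientation-reversing elements can be corrected by a sign on the remaining coordinates, which keeps the action orientation-preserving on $S^4$. Take $p$ to be the vertices of the polytope normalized onto the sphere. For $H\times C_\ell$, use the remaining orthogonal $2$-plane: $C_\ell$ rotates it, and an orbit of size $\ell$ in that plane carries the $\ell$ extra copies of $\cp^2$.

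\textbf{Part 1 (dihedral and cyclic products).} Use $D_k\times D_\ell\leq O(2)\times O(2)$ acting on $\R^2\oplus\R^2\oplus\R$. Choose vertex orbits of $k$ points in the first plane and $\ell$ points in the second, adjusting signs on the last coordinate to stay in $SO(5)$. For $C_k\times C_\ell$, the extra $n-k-\ell$ copies of $\cp^2$ are attached at global fixed points. These can be arranged as a trivially-acting chain, glued at the points $(0,0,\pm1)$ with a linear action on $\cp^2$ matching the rotation data there.

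\textbf{Parts 3 and 4.} In Part 3 the hypercube and orthoplex groups act on $\R^5$, and their orientation-preserving subgroups lie in $SO(5)$. The vertex orbits have sizes $32$ and $10$. In Part 4, $S_n$ for $n\leq5$ acts on $\R^{n-1}\subseteq\R^5$ through the standard simplex representation, possibly twisted by $\mathrm{sgn}$ on a spare coordinate to stay orientation-preserving. $A_6\leq SO(5)$ acts via its standard $5$-dimensional representation on the vertices of the $5$-simplex, giving orbit size $6$.

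\textbf{Main obstacle.} The hard step is the local matching at each orbit point. The stabilizer $\mathrm{Stab}_\Gamma(p)$, for example a dihedral or $S_{n-1}$-type group, must act on $\cp^2$ homologically trivially or by the required sign, with a fixed point $q$ whose tangential representation is orientation-reversingly isomorphic to that on $T_pS^4$. For abelian $2$-stabilizers this is handled by \Cref{lem:ser} together with $f_X$ and $J$. For larger stabilizers I would try linear actions of subgroups of $PGL_3(\R)$ fixing $[1:0:0]$, combined with complex conjugation $J$ to supply the orientation-reversing tangent isometry. Part 4, with stabilizers $S_{n-1}$ and $A_5$, is where this is tightest. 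There I would check case by case that $S_{n-1}\leq O(n-1)$ acts on $\rp^2\subset\cp^2$ fixing a real point. If homology signs are forced, I would fall back on the signed copy of $S_n$ described in the earlier remark.
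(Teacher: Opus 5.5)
Your construction is the paper's: a linear action on $S^4$, then equivariant connected sum along a vertex orbit. You correctly isolate the one nontrivial step, which the paper's proof also leaves unaddressed. The vertex stabilizer $\Gamma_p$ must act smoothly on $\cp^2$ with a fixed point $q$ whose tangent representation is $T_pS^4$ via an orientation-reversing isometry. Because the target is the \emph{unsigned} permutation representation, $\Gamma_p$ must also act trivially on $H_2(\cp^2;\Z)$.

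You do not carry this step out, and it is not a routine check: the required local model does not exist in many of the claimed cases. Suppose $a,b\in\Gamma_p$ are commuting involutions acting trivially on $H_2(\cp^2;\Z)$, and $da_q$ fixes a $2$-plane on which $db_q$ has determinant $-1$. By \Cref{prop:ed} (with $t=1$, $c=0$), $\mathrm{Fix}(a)$ is a $2$-sphere $C\ni q$ plus one point. \Cref{thm:gsig} gives $C\cdot C=2\cdot 1-1=1$, so $[C]\neq 0$. But $b$ preserves $\mathrm{Fix}(a)$ and fixes $q$, so it maps $C$ to itself reversing orientation. Hence $b_*[C]=-[C]$, a contradiction.

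Write $V_m$ for the standard representation of $S_{m+1}$. For $S_5$, the only orientation-preserving orthogonal action on $\R^5$ with an orbit of size $5$ is, up to isomorphism, $V_4\oplus\mathrm{sgn}$. So $T_pS^4\cong V_3\oplus\mathrm{sgn}$ as an $S_4$-representation, and $a=(12)(34)$, $b=(13)(24)$ are as above. Every character $S_4\to\{\pm1\}$ is trivial on this Klein group, so no local model exists at all, not even one producing a signed copy.

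The same argument rules out the following:
\begin{itemize}
\item $A_6$ on $M_6$, where $T_pS^4|_{A_4}\cong\mathrm{triv}\oplus V_3$.
\item The rotation group of the $4$-simplex.
\item Both groups of Part 3. Their vertex stabilizers are $A_5$, resp.\ the orientation-preserving signed permutations of $\R^4$, which contain $\mathrm{diag}(-1,-1,1,1)$ and $\mathrm{diag}(-1,1,-1,1)$.
\item Your $H\times C_\ell$. At a vertex of the $\ell$-gon the stabilizer is $H$, acting on $T_pS^4\cong\R^3\oplus\mathrm{triv}$, and $H$ contains half-turns about three perpendicular axes.
\item Your $D_k\times D_\ell$ for even $\ell$. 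At a vertex of the $k$-gon, take the half-turn and a reflection of $D_\ell$. With your sign on the fifth coordinate, the former fixes a $2$-plane that the latter reflects.
\end{itemize}

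Your fallbacks do not repair this. Realizing the signed copy $\{\mathrm{sgn}(\sigma)\sigma\}$ says nothing about $S_n<O(H_{M_n})$: the two subgroups are not conjugate in $O(H_{M_n})$ (a transposition has trace $n-2$ in one and $2-n$ in the other), and realizability is only a conjugacy invariant.

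Even for $n=4$, your sign-twisted simplex representation gives $T_pS^4\cong V_2\oplus\mathrm{triv}\oplus\mathrm{sgn}$ for the stabilizer $S_3$. There a $3$-cycle fixes a $2$-plane reflected by a transposition. The mod-$3$ version of the argument above applies: the $3$-cycle fixes a sphere $C$ and a point, and \Cref{thm:gsig} with point defect $\pm 2/3$ forces $C\cdot C=1$. So transpositions act by $-1$ on their own summand, and this choice can only produce the signed copy.

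What works for $n=4$ is to let $S_4$ act on $\R^3\oplus\R^2$: by tetrahedral symmetries on $\R^3$, and through $S_4\to S_3$ by symmetries of a triangle on $\R^2$. The determinant is $\mathrm{sgn}^2=1$. Then $T_pS^4\cong V_2\oplus V_2$ is the realification of the complex standard representation of $S_3$. It is matched at $[1:1:1]$ by $S_3$ permuting the homogeneous coordinates of $\cp^2$, which acts trivially on homology. Choose the orientation of $S^4$ so that the equivariant identification reverses orientation.

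With this change, and diagonal linear models for the abelian stabilizers, your outline proves the $C_k\times C_\ell$ claim and $S_n$ for $n\le 4$. The remaining claims need separate justification. For $S_5$, $A_6$, the rotation group of the $4$-simplex and $H\times C_\ell$, no choice of linear action on $S^4$ helps, because the relevant tangent representations are forced.
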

\begin{proof}\hfill
    \begin{enumerate}
        \item  Model $M_{k+\ell}$ by connect summing $k$ copies of $\cp^2$ onto a central $S^4$ at regular intervals around a geodesic $S^1\subset S^4$, and $\ell$ copies at regular intervals around an orthogonal geodesic $S^1\subset S^4$.  Then rotations of $S^4$ induce an action of $D_k\times D_\ell$, which extends over connect-sums to all of $M_{k+\ell}$.

        \item Realization of $G$ in $\diff^+(M_n)$ follows simply by rigid rotations of $S^4$, by connect summing copies of $\cp^2$ in neighborhoods of the vertices of such regular polytopes on $S^4$.  Realizing $H\times C_\ell$ follows similarly, positioning the points in a platonic solid and a circle on orthogonal equatorial subspheres $S^3$ and $S^1$ in $S^4$.  Realizing $K$ again is simply achieved by rigid rotation of $S^4$.

        \item  Again, realize $G$ in $\diff^+(M_n)$ by rigid motions, using a model where the $n$ copies of $\cp^2$ are glued symmetrically at the vertices of an inscribed cube or orthoplex in $S^4$.
        
        \item For $n=2,3,4,5,6$, model $M_n$ by connect summing $n$ copies of $\cp^2$ onto a central $S^4$ at neighborhoods of the vertices of an inscribed regular $(n-1)$-simplex.  Hence by isometries of the round $S^4$, realize $A_n$, the orientation-preserving isometry group of the $(n-1)$-simplex.  For $n=2,3,4,5$, the vertex points can be chosen to lie on an equatorial $S^3\subseteq S^4$.  Hence, first reflect $S^4\subset \R^5$ through a 4-plane, which induces an orientation-reversing isometry of $S^3$, and then reflect $S^4$ through the 4-plane containing that $S^3$.  In this way realize any isometry of the $(n-1)$-simplex by an orientation-preserving diffeomorphism of $S^4$ which extends to a diffeomorphism on $M_n$ inducing the corresponding permutation.
        
    \end{enumerate}
\end{proof}

\begin{rmk}
    \Cref{prop:polygon} generalizes in several elementary but cumbersome-to-state ways.  For example, consider connect summing a copy of $M_2$ at each vertex of a platonic solid with $k$ vertices.  This then gives a realizable subgroup $S_k\leq S_{2k}\leq S_n<O(H_{M_n})$.
\end{rmk}

In the rest of this section, we prove \Cref{cor:Sn-summary}.

\begin{lem}\label{lem:cycle-restrinctions-small-cases}
    Suppose $\sigma\in S_n\leq O(H_{M_n})$ is of odd order $m$ and realizable in $\diff^+(M_n)$. \begin{enumerate}
        \item Suppose $\sigma$ has no fixed points as a permutation.  If the cycle type of $\sigma$ contains two prime lengths $p,q>1$ which are minimal under divisibility among all cycle lengths, then every cycle length in $\sigma$ other than $p$ and $q$ is divisible by $pq$.
        \item Suppose $\sigma$ has a single fixed point as a permutation. Then it cannot simultaneously have cycles of lengths $3$, $5$, and $7$.
    \end{enumerate}
\end{lem}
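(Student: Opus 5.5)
We reduce both parts, via \Cref{thm:ht-formal}, to statements about standard-linear actions. If $\sigma$ is realizable, then some standard-linear $\Z/m\Z$-action on $M_n$ induces the same permutation on $H_2(M_n;\Z)$. Let $T$ be its admissible tree and $T_0$ its central subtree. The number of central copies of $\cp^2$ equals the number of fixed points $C_{1,n}(\sigma)$ of the permutation. We then use the structure already extracted in the proof of \Cref{lem:HT-maximal-stabilizers}. Each cycle of $\sigma$ of length $d$ corresponds to an orbit of peripheral copies with stabilizer $\pi_d$, of order $m/d$. By Lemma 1.12 of \cite{ht}, a peripheral orbit with stabilizer $\tau$ is attached at a point of the parent copy lying in the isotropy stratum of $\tau$. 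So $\tau$ contains the stabilizer of its child orbits and is contained in the stabilizer of its parent.

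\textbf{Part 1.} Here $C_{1,n}(\sigma)=0$, so there is no central core: the action is built on a linear $\Z/m\Z$-action on $S^4$ by rotations in two orthogonal $2$-planes. The cycles of length $p$ and $q$ are minimal under divisibility, so their copies cannot be attached to a peripheral copy with a smaller orbit. Hence they are attached directly to $S^4$, at points of orbit sizes $p$ and $q$.

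A linear action on $S^4\subset\C^2\oplus\R$ has nontrivial isotropy strata only on the two invariant circles. One circle has orbit size $m/(a,m)$, the other $m/(b,m)$, and all other non-fixed points have orbit size $m$ (or the lcm, for a faithful action). Thus the two circles realize exactly orbit sizes $p$ and $q$. Since $p,q$ are distinct primes, the rotation numbers force $(a,m)=m/p$ and $(b,m)=m/q$. Faithfulness then gives $m=pq\cdot(\text{something coprime})$, and generic orbits have size $m$, which is divisible by $pq$.

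We then argue inductively up the tree. Any other cycle length $d$ arises from a copy attached either at a generic point of $S^4$ (so $d=m$, divisible by $pq$) or to an earlier peripheral copy. In the second case the orbit size is a multiple of the parent's orbit size, because the stabilizer shrinks. The new stratum must also avoid giving exactly the orbit sizes $p$ or $q$ again; otherwise $d\in\{p,q\}$. This is the main obstacle: one must check that on a peripheral copy with orbit size $p$, the isotropy strata available on the $\cp^2$ with the residual $\pi_p$-action produce orbit sizes $p$ (excluded) or multiples of $pq$. I expect this to follow from the rotation-number compatibility $(a,b)\leftrightarrow(-a,b)$ at the gluing point, which propagates the weight coprime to $q$. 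I would first verify it using Lemma 1.13 of \cite{ht}, which constrains the case with no central copies.

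\textbf{Part 2.} Here $C_{1,n}(\sigma)=1$, so $T_0$ is a single linear $\cp^2$ with weights $(1,\zeta^a,\zeta^b)$. Cycles of lengths $3$, $5$ and $7$ give maximal proper stabilizers of indices $3$, $5$ and $7$. By the argument of \Cref{lem:HT-maximal-stabilizers}, each must occur as an isotropy group of one of the three coordinate lines of the central copy, that is, as $\pi$ intersected with the stabilizers determined by $(a,m)$, $(b,m)$ and $(a-b,m)$. The key observation is that the three index-$p$ stabilizers force $(a,m)$, $(b,m)$ and $(a-b,m)$ to equal $m/p_1$, $m/p_2$ and $m/p_3$ for $\{p_1,p_2,p_3\}=\{3,5,7\}$. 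In particular $7\mid a$ (say), $5\mid b$ and $3\mid a-b$, with compatible divisibilities by the other primes dividing $m$.

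We then derive a contradiction. Take a prime such as $3$ dividing $m/p$ for the two lines not of index $3$. Then $3$ divides both $a$ and $b$, hence $a-b$, so $3\mid\gcd(a,b,m)$, contradicting $(a,b,m)=1$. This contradiction is mechanical once the identification of strata is justified, and that identification again rests on Lemma 1.12 of \cite{ht}, as in \Cref{lem:HT-maximal-stabilizers}.
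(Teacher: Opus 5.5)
Your Part 1 has a genuine gap, located at the step you yourself flag as ``the main obstacle.'' You leave it as an expectation (``I expect this to follow from the rotation-number compatibility\dots''). The route you suggest would not close it on its own. Lemma 1.13 of \cite{ht} only says $S(\sigma)$ has at most two maximal elements. Here these are $\pi_p,\pi_q$, which gives $p\mid d$ or $q\mid d$ for each cycle length $d$, not $pq\mid d$.

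The missing idea is a deduction you nearly made. Faithfulness of the linear action on $S^4$ means its kernel $\{g:\ ga\equiv gb\equiv 0 \bmod m\}$ is trivial, i.e.\ $m=\operatorname{lcm}(o(a),o(b))$. Once you have $\{o(a),o(b)\}=\{p,q\}$, this forces $m=pq$ exactly, not ``$pq\cdot(\text{something coprime})$.'' After that there is nothing to check on peripheral copies. Every cycle length of $\sigma$ divides $m=pq$ and exceeds $1$ (there are no fixed points), so it lies in $\{p,q,pq\}$. The paper says the same via branches: over an orbit of length $p$, all orbit lengths are multiples of $p$ dividing $pq$.

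A small imprecision: a $p$-cycle need not be attached directly to $S^4$. It may hang off another orbit of length $p$; what is true is that its root ancestor is attached to $S^4$ at an orbit of length $p$.

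Part 2 is correct and is essentially the paper's argument. The prime lengths $3,5,7$ must be realized as the additive orders of $a$, $b$, $a-b$ on the unique central copy. Your contradiction via $3\mid\gcd(a,b,m)$ is equivalent to the paper's observation that, in a cyclic group, the difference of elements of orders $3$ and $5$ has order $15$.

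Fix one misstatement there: $(a,m)=m/7$ means $m/7\mid a$, i.e.\ $a$ has additive order $7$; it does not mean $7\mid a$, which typically fails. Your actual contradiction step correctly uses $3\mid m/p$ for $p\neq 3$, so the argument survives.

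Also, the nontrivial isotropy strata of the linear action on $S^4\subset\C^2\oplus\R$ are the invariant $2$-spheres $\{z_1=0\}$ and $\{z_2=0\}$, not circles. This does not affect the orbit lengths.
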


\begin{proof}
    By \Cref{thm:ht-formal}, it suffices to show for standard linear actions.  First suppose that $\sigma$ has no fixed points. Then the standard-linear model has empty central core, so it is built from an orthogonal $\mathbb Z/m\mathbb Z$-action on $S^4$, followed by equivariant connected sums along orbits. Write the two nontrivial rotation weights on $S^4$ as $a,b \in \mathbb Z/m\mathbb Z$, and let $o(a)=m/(a,m), o(b)=m/(b,m)$ denote their additive orders. The initial nontrivial orbit lengths on $S^4$ are $o(a), o(b),$ and $\operatorname{lcm}(o(a),o(b)).$ Since the action is faithful, $m=\operatorname{lcm}(o(a),o(b))$. Every later vertex of the admissible tree lies over an earlier one, and its orbit length is a multiple of the orbit length of its predecessor. Thus a minimal nontrivial cycle length must already occur among the initial orbit lengths above. If two distinct primes $p,q$ occur as minimal nontrivial cycle lengths, then, after relabeling the two rotation planes, $o(a)=p,o(b)=q.$ Hence $m=pq$. A branch lying over an orbit of length $p$ has stabilizer of order $q$, so all orbit lengths in that branch are either $p$ or $pq$. Similarly, a branch lying over an orbit of length $q$ has all orbit lengths either $q$ or $pq$, and a branch lying over a generic orbit has orbit length divisible by $pq$.
    
    Now assume that the central core consists of exactly one copy of $\cp^2$. A standard linear action on this copy has the form $t \mapsto \operatorname{diag}(1,\zeta^a,\zeta^b)$ in projective coordinates, and the three distinguished orbit lengths from which peripheral branches may begin are the additive orders in $\mathbb Z/m$ of $(a, b, a-b).$ If cycle lengths $3,5,7$ all occurred, then, after relabeling $a,b,a-b$, there would be elements $x,y \in \mathbb Z/m$ of additive orders $3$ and $5$ such that $x-y$ has additive order $7$. But in a cyclic group, the difference of elements of prime orders $3$ and $5$ has order $15$. Indeed the subgroup they generate is cyclic of order $15$, and $x-y$ projects nontrivially to both the order $3$ and order $5$ factors. This is a contradiction.
\end{proof}

\noindent \Cref{cor:Sn-summary} then follows by combining the following two propositions.

\begin{cor}\label{cor:Sn-to-8}
    All cyclic subgroups of $S_n<O(H_{M_n})$ are realizable in $\diff^+(M_n)$ for $1\leq n\leq 8$.  When $n\geq 15$, there exists a non-realizable cyclic subgroup.
\end{cor}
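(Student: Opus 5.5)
The plan is to prove the two halves separately: a construction for $n\le 8$ and an explicit obstruction for $n\ge 15$.

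\textbf{Realization for $n\le 8$.} Let $\sigma\in S_n$ have arbitrary order, not necessarily odd. First I will show a general claim: any permutation whose nontrivial cycle lengths all lie in a set $\{k,\ell,\operatorname{lcm}(k,\ell)\}$, with arbitrary multiplicities and any number of fixed points, is realizable. The model is the one from the proof of \Cref{prop:polygon}(1). Let $S^4\subset\R^5=\C\oplus\C\oplus\R$ carry the rotation $R=(e^{2\pi i/k},e^{2\pi i/\ell},1)$.
\begin{itemize}
\item Points of the $2$-sphere $S^4\cap(\C\oplus 0\oplus\R)$ away from the axis have orbits of size exactly $k$, and there are infinitely many disjoint such orbits. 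The same holds for orbits of size $\ell$ on the other coordinate $2$-sphere.
\item Generic points have orbits of size $\operatorname{lcm}(k,\ell)$.
\end{itemize}
For each $k$-cycle, $\ell$-cycle and $\operatorname{lcm}$-cycle of $\sigma$, I connect-sum copies of $\cp^2$ equivariantly along a distinct orbit of the corresponding size, with the identity action permuted cyclically.

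For the fixed letters of $\sigma$, I attach a chain of copies of $\cp^2$ at one of the two fixed poles $(0,0,\pm1)$, each copy carrying a standard linear action. The tangent representation at the pole has rotation numbers $(1/k,1/\ell)$. A linear action $\operatorname{diag}(1,\zeta^a,\zeta^b)$ on $\cp^2$ can be chosen with rotation numbers $(-a,b)$ at a coordinate point, so the orientation-reversing gluing used for the central core applies. Further copies are chained at the remaining coordinate points in the same way. These linear actions act trivially on $H_2$.

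The induced map on $H_2(M_n;\Z)$ is exactly the positive permutation matrix $\sigma$. It has finite order, and the group it generates maps isomorphically onto $\ang{\sigma}$.

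It then remains to check the combinatorics. For $n\le 8$, no permutation has three distinct nontrivial cycle lengths, since the minimum such sum is $2+3+4=9$. Hence every cycle type uses at most two nontrivial lengths $k,\ell$, and the claim applies.

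\textbf{Obstruction for $n\ge 15$.} I will use \Cref{lem:cycle-restrinctions-small-cases}, choosing an odd-order cycle type for each $n$.

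For the case without fixed points, use part (1) with $p=3$ and $q=5$. Any cycle type with no fixed points that contains a $3$-cycle and a $5$-cycle has $3$ and $5$ minimal under divisibility, because every part is odd and greater than $1$. If some other part is not divisible by $15$, the type is obstructed. This gives:
\begin{itemize}
\item $3+5+7+(\text{3s and 5s})$, which covers $n=15$, $n=18,20,21$, and every $n\ge 23$, since all integers $\ge 8$ together with $0,3,5,6$ are sums of $3$s and $5$s;
\item $3+5+9$ for $n=17$;
\item $3+5+11$ for $n=19$;
\item $3+5+7+7$ for $n=22$.
\end{itemize}
For $n=16$, use part (2) with cycle type $1+3+5+7$.

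All of these permutations have odd order, so \Cref{thm:ht-formal} applies through the lemma. For each such $\sigma$, the subgroup $\ang{\sigma}$ is then not realizable.

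\textbf{Expected obstacle.} The main care point is the construction step for even orders. There, \Cref{lem:ser}-style tangent matching at the fixed poles must respect orientation. I must check that $(1/k,1/\ell)$ can be matched orientation-reversingly with the tangent representation at a coordinate point of some linear action on $\cp^2$. I also need the chained fixed copies to keep the action effective and trivial on their hyperplane classes; I would verify this directly from the local gluing model of \Cref{constr:1}.
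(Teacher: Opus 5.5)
\textbf{Overall.} Your route is the paper's: rotations of $S^4$ with copies of $\cp^2$ summed along orbits, as in \Cref{prop:polygon}(1), for $n\le 8$, and \Cref{lem:cycle-restrinctions-small-cases} for $n\ge 15$. The obstruction half is correct as written. Padding $3+5+7$ by $3$s and $5$s covers $n=15,18,20,21$ and all $n\ge 23$. The types $3+5+9$, $3+5+11$, $3+5+7+7$ and $1+3+5+7$ handle $n=17,19,22,16$. Each type has odd order and either no fixed point, or exactly one for part (2). In the no-fixed-point cases, $3$ and $5$ are minimal under divisibility and some other part is not divisible by $15$. This is only a different case split from the paper's, which splits by the parity of $n$ and by whether $15\mid n-8$.

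\textbf{The gap.} The construction half has a step that fails as stated: attaching copies along the $k$- and $\ell$-orbits ``with the identity action permuted cyclically.''
\begin{itemize}
\item Let $m=\operatorname{lcm}(k,\ell)$, let $\Sigma=S^4\cap(\C\oplus0\oplus\R)$, and take a $k$-orbit $x_j=R^jx_0$ on $\Sigma$.
\item Its stabilizer is $\ang{R^k}$, of order $m/k$. The element $R^k=(1,e^{2\pi ik/\ell},1)$ fixes $\Sigma$ pointwise but rotates its normal plane by $e^{2\pi ik/\ell}$, which is nontrivial unless $\ell\mid k$.
\item If $R$ carries the copy $P_j$ at $x_j$ to $P_{j+1}$ by the identity, then $R^k$ is the identity on $P_0$. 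This cannot agree across the gluing sphere with the nontrivial rotation of $R^k$ near $x_0$.
\end{itemize}
So no diffeomorphism is defined, for example for the cycle types $(2,3)$, $(2,4)$ and $(3,5)$, which all occur with $n\le 8$. Swapping the roles of $k$ and $\ell$ does not help. Your ``expected obstacle'' paragraph checks tangent matching only at the poles, but the same problem occurs at every orbit with nontrivial isotropy.

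\textbf{The repair.} Use the peripheral-copy step of the standard-linear construction of \Cref{sec:cyclic}.
\begin{itemize}
\item Give $P_0$ the linear action $[X:Y:Z]\mapsto[X:Y:\omega Z]$ with $\omega=e^{2\pi ik/\ell}$. At $[1:0:0]$ its tangent action is trivial on one complex line and rotation by $\omega$ on the other. So it matches $R^k$ at $x_0$ under an orientation-reversing gluing.
\item Define $R|_{P_{k-1}}\colon P_{k-1}\to P_0$ as transport followed by this map, so that $R^k|_{P_0}$ is the linear map.
\item The map lies in $PGL_3(\C)$, so it acts trivially on $H_2(\cp^2)$. Hence $R_*$ is still the positive permutation $\sigma$, and $R$ still has order $m$, so $\ang{R}\to\ang{\sigma}$ is an isomorphism.
\item Do the same for the $\ell$-orbits. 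The free $\operatorname{lcm}(k,\ell)$-orbits and the chain at the poles work as you describe.
\end{itemize}
The paper's one-line appeal to \Cref{prop:polygon}(1) also leaves this implicit. What you need here is this detail, not a new idea.
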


\begin{proof}
    All $\sigma\in S_n$ for $n\leq 8$ are of cycle types with at most two distinct cycle lengths greater than 1 and hence can be constructed as in case (1) of \Cref{prop:polygon}.
    
    Now say $n\geq 15$ is odd.  If $15\nmid (n-8)$, then the permutation of cycle type $(3,5,n-8)$ is not realizable in $\diff^+(M_n)$ by the first part of \Cref{lem:cycle-restrinctions-small-cases}.  If $15\mid (n-8)$, then $n\equiv 2\pmod 3$, and so $n-10\equiv 1\mod 3$.  Then $21\nmid (n-10)$.  Consider the permutation $(3,7,n-10)$ and apply \Cref{lem:cycle-restrinctions-small-cases}.

    For $n=16$, consider the permutation with cycle lengths $(1,3,5,7)$ and apply \Cref{lem:cycle-restrinctions-small-cases}.
    
    When $n\geq 18$ is even, consider the permutation with cycle lengths $(3,5,7,n-15)$ and apply \Cref{lem:cycle-restrinctions-small-cases}.
\end{proof}

\noindent For large $n$, \Cref{thm:asym} shows that realizing all of $S_n<O(H_{M_n})$ at once is impossible.  In fact, such an obstruction begins at least at $n=8$.

\begin{prop}\label{prop:sn-obstr}
    Let $n\geq8$, and pick some $k$ with $k-2 > \max\{n/2,5\}$. Consider a subgroup $S_k\leq S_n< O(H_{M_n})$ given by permuting some choice of $k$ standard basis elements of $H_2(M_n;\Z)$ (i.e., hyperplane classes of $\cp^2$s).  Then the group $S_k$ is not realizable in $\diff^+(M_n)$.  In particular, the group $S_n<O(H_{M_n})$ is not realizable in $\diff^+(M_n)$ for any $n\geq 8$.
\end{prop}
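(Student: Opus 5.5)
The plan is to derive a contradiction from the fixed set of a single transposition, using its large centralizer. Suppose $S_k$ is realized by $\tilde S_k\leq\diff^+(M_n)$, and let $f\in\tilde S_k$ realize a transposition $\tau=(12)$ on two of the $k$ chosen basis elements. As a $\Z/2$-module, $H_2(M_n;\Z)$ has $t=n-2$ trivial summands, $c=0$ cyclotomic summands and $r=1$ regular summand. By \Cref{prop:ed}, $\chi(Fix(f))=t-c+2=n$, so $Fix(f)\neq\emptyset$. Then $\beta_1(Fix(f))=0$ and $\beta_0+\beta_2=n$. Hence $Fix(f)$ consists of $a$ isolated points and $s$ embedded spheres with
\[a+2s=n.\]
As a sanity check, the $G$-signature theorem (\Cref{thm:gsig}) gives $\sum_C C\cdot C=2(n-1)-n=n-2$, since involution defects at isolated points vanish. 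I will use this only to see that spheres are present when needed.

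Next I use the centralizer. The centralizer of $\tau$ in $S_k$ contains $S_{k-2}$, and hence the simple group $A_m$ with $m=k-2\geq 6$. Its lift commutes with $f$, so it permutes the isolated points and the spheres of $Fix(f)$ separately. A nontrivial orbit of $A_m$ has size at least $m$, since every proper subgroup of $A_m$ has index at least $m$ for $m\geq 5$. The hypothesis $m>n/2$ gives $s\leq n/2<m$, so $A_m$ fixes every sphere setwise. If $s=0$, then $a=n<2m$, so $A_m$ has at most one nontrivial orbit on the points. In that case either some isolated point is fixed by all of $A_m$, or the points form one orbit of size $a\geq m$ with a point stabilizer of index $a<2m$. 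Such a stabilizer is $A_{m-1}$ (or one of the exceptional index-$6$ subgroups $\cong A_5$ when $m=6$).

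I then rule out each configuration by embedding a too-large group into a rigid local model. If $A_m$ stabilizes a sphere $S$, then, as in the proof of \Cref{prop:2-groups}, the kernel of the action on $S$ embeds in $SO(2)$ via the normal bundle. Since $A_m$ is simple and nonabelian, $A_m$ therefore acts faithfully on $S^2$, and by uniformization embeds in $O(3)$. This is impossible for $m\geq 6$. If $A_m$ fixes a point, averaging a metric makes the tangential representation faithful, giving $A_m\into SO(4)$. This is again impossible for $m\geq 6$, since the finite subgroups of $SO(4)$ come from pairs of finite subgroups of $SU(2)$, and the only nonabelian simple group among their composition factors is $A_5$.

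The main obstacle is the remaining case: $s=0$ and the $n$ isolated points form a single orbit with stabilizer $A_{m-1}$, which for $m=6$ is $A_5$ and does embed in $SO(4)$. To handle it, I would also use the stabilizer of $2$ (an element $g$ realizing a transposition $(34)$ commuting with $f$), or pass to a larger centralizing subgroup $S_{m}\times\langle f\rangle$ and require a faithful action of $S_{m-1}\times\Z/2$ on $T_pM$. I would then check that $S_5\times\Z/2$ or $A_6$-type tangential representations cannot preserve the complex-type structure forced at an isolated fixed point of $f$. Alternatively, I would apply the same analysis to a double transposition, whose fixed-set count $a+2s=n-2$ shifts the parity. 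The hypothesis $k-2>5$ is exactly what should push the stabilizer beyond $SO(4)$ in this step.

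For the final claim, take $k=n$: the inequality $n-2>\max\{n/2,5\}$ holds precisely when $n\geq 8$, so $S_n<O(H_{M_n})$ is not realizable for $n\geq 8$.
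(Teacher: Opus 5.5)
Your strategy is the paper's. You realize $(12)$ by an involution $f$ and use Edmonds to write $Fix(f)$ as $a$ points and $s$ spheres with $a+2s=n$. You let the commuting $A_{k-2}$ act on the $s\le n/2<k-2$ spheres and conclude from the fact that $A_{k-2}$ for $k-2\ge 6$ admits no faithful action on $S^2$ near a fixed point. You run the $O(3)$ obstruction (via the cyclic normal-bundle kernel) and the paper runs the $SO(4)$ obstruction; these are the same two facts applied in opposite order, and your version is fine.

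The gap is the branch $s=0$. You call it the main obstacle and leave it open, offering only speculative strategies. Those strategies are also flawed. If all $a=n$ points form a single $A_m$-orbit, the point stabilizer has index $n>m$ (since $m=k-2\le n-2$), so it cannot be $A_{m-1}$. For instance, with $n=10$, $k=8$ it could be the index-$10$ subgroup $3^2{:}4$ of $A_6$.

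This branch never occurs, and you already wrote down why. Your ``sanity check'' gives $\sum_C C\cdot C=2(n-1)-n=n-2$, and the signature defect of an involution at an isolated fixed point is $0$. Since $n-2\ge 6>0$, there must be at least one $2$-dimensional fixed component, i.e.\ $s\ge 1$. This is exactly the step the paper uses to force $\ell>0$.

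So replace your final case analysis with this one-line observation. With $s\ge 1$, your sphere argument completes the proof:
\begin{itemize}
\item every sphere is $A_m$-invariant, because nontrivial orbits have size at least $m>s$;
\item the kernel of the action on such a sphere $S$ embeds in $SO(2)$ via the normal bundle, so it is trivial because $A_m$ is simple and nonabelian;
\item hence $A_m$ acts faithfully and smoothly on $S\cong S^2$ and embeds in $O(3)$, which is impossible for $m\ge 6$.
\end{itemize}
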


\begin{proof}
    Consider the permutation $(12)\in S_k$, and suppose it is realized by an order 2 diffeomorphism $f\in\diff^+(M_n)$. By \Cref{prop:ed} with $p=2$, we have $\beta_1(M_n^{\ang{f}})=0$, and hence $M_n^{\ang{f}}$ consists only of isolated points and spheres.  Let $\ell$ denote the number of spheres.  Since $\beta_0(S^2)+\beta_2(S^2)=2$, then $0\leq\ell\leq n/2$.  Now consider the subgroup $S_{k-2}\leq S_k$ which permutes the last $k-2$ basis vectors.

    By the $G$-signature theorem, \Cref{thm:gsig}, applied to the group $\ang{f}$, \begin{align*}
        n-2=2(n-1)-n&=\sum_{z}\de_z+\sum_{C}\de_C=\sum_{z}0+\sum_{C}C\cdot C.
    \end{align*}
    Since $n-2>0$ and the signature defect of a point is $0$ when $p=2$, then there must be 2-dimensional components.  That is, $\ell>0$.
    
    Notice then that $S_{k-2}$ acts on the finite set of $\ell$ spheres.  The kernel of this action is thus either trivial, $A_{k-2}$, or $S_{k-2}$.  Since $\ell<k-2$, the action cannot have trivial kernel.  Thus $A_{k-2}$ fixes an $f$-fixed sphere set-wise.  But if $k-2\geq 6$, then $A_{k-2}$ cannot act effectively on $S^2$, and so $A_{k-2}$ fixes that sphere pointwise.  Let $p\in S^2$ be a point on such a sphere.  For $k-2\geq 6$, the group $A_{k-2}$ does not admit an embedding into $SO(4)$.  Hence some nontrivial element $g\in A_{k-2}$ acts by the identity on $T_pM$.  Thus the group $A_{k-2}$, and hence $S_k$ cannot be realizable.
\end{proof}

\begin{rmk}
    Similar arguments to the proof of \Cref{prop:sn-obstr} can be used to show that $A_n\leq S_n<O(H_{M_n})$ is not realizable in $\diff^+(M_n)$ for any $n\geq 10$. In fact, the ideas of the proof obstruct a much wider class of subgroups from being realized.  For example, consider any $H\leq S_n<O(H_{M_n})$ which contains a 2-cycle whose centralizer in $H$ is large and has no small, nontrivial normal subgroups which embed into $SO(4)$ or $O(3)$.  We omit the most general possible formulation, which would be overly cumbersome.
\end{rmk}

\section{General subgroups}\label{sec:other}
This final section considers all subgroups of $O(H_{M_n})$, not necessarily contained in either $G_n\leq O(H_{M_n})$ or $S_n<O(H_{M_n})$.  In \Cref{sec:gen-constr}, we give constructions which in particular show cyclic realization in the smallest cases.

\begin{prop}\label{prop:m23}
    When $n=2$ or $n=3$, all cyclic subgroups of $O(H_{M_n})$ are realizable in $\diff^+(M_n)$.
\end{prop}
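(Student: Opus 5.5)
Since realizability is a conjugacy invariant in $O(H_{M_n})$ (by Baraglia's surjectivity, as noted in the introduction), the plan is to enumerate conjugacy classes of elements of $O(n,\Z)\cong G_n\rtimes S_n$ for $n=2,3$ and realize a representative of each. A signed permutation is determined up to conjugacy by its signed cycle type: each cycle of the underlying permutation carries a sign, the product of the signs along the cycle. For $n=2$ the classes are: diagonal elements $\mathrm{diag}(\pm1,\pm1)$, a transposition with positive cycle sign (order $2$), and a transposition with negative cycle sign (order $4$, e.g.\ $e_1\mapsto e_2\mapsto -e_1$). For $n=3$ we get diagonal elements; a positive or negative $2$-cycle together with a fixed coordinate $\pm1$; and a $3$-cycle with cycle sign $\pm1$. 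The order-$6$ class (negative $3$-cycle) is $-1$ times the positive $3$-cycle.

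The diagonal classes are handled by \Cref{thm:large_rank}(1), since rank $\leq 2$ suffices (in fact all of $G_3$ is realized by \Cref{eg:g3}). Positive cycles come from \Cref{prop:polygon}: connect-summing copies of $\cp^2$ at vertices of a regular polygon on an equatorial circle in $S^4$ and rotating realizes the $2$-cycle and the $3$-cycle with all signs positive. To realize a positive cycle times a fixed coordinate acting by $\pm1$, I would place the extra $\cp^2$ at a point on the rotation axis of $S^4$ fixed by the rotation. On that copy the action fixes a point and the local representation can be chosen (via a hinge-type gluing as in \Cref{constr:1}) to be either $f_X$ or $J$ restricted near a coordinate point, giving sign $+1$ or $-1$. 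One must check that the tangential rotation of $S^4$ at the attaching point matches, up to orientation reversal, the local representation of $f_X$ (resp.\ $J$) at a fixed point of $\cp^2$. This is checkable since both are involutions in $SO(4)$ with prescribed fixed dimensions $2$.

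The main obstacle is the classes with negative cycle sign, such as the order-$4$ element on $M_2$ and negative $3$-cycles. For $-1$ times the positive $3$-cycle, I would compose the rotation of $S^4$ with a commuting involution acting as complex conjugation $J$ on every copy of $\cp^2$. Concretely, choose the rotation axes and attaching points on a fixed $\R$-form so that an orientation-preserving reflection-type isometry of $S^4$ commutes with the rotation and restricts near each vertex to the local model of $J$. For the order-$4$ element on $M_2$, I would take $\cp^2\#\cp^2$ with a diffeomorphism $F$ that swaps the two summands, and arrange that $F^2$ acts as $J$ on each copy. One candidate is $F = \tau\circ(J\text{ on one side})$, with the gluing chosen so that this composite has order exactly $4$ and is smooth. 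Equivalently, one needs a $\Z/4$ action on $S^4$ with an orbit of size $2$ whose stabilizer $\Z/2$ acts at the points like $J$ at a point of $\rp^2\subset\cp^2$, i.e.\ with a $2$-dimensional fixed set. Such a $\Z/4\subset SO(5)$ exists, e.g.\ a rotation by $\pi/2$ in one plane combined with a rotation by $\pi$ in another. I would verify this tangential matching explicitly, and I expect it to be the delicate step.
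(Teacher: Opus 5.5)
\textbf{What works.} Your enumeration of signed cycle types is correct, and most of the constructions are sound:
\begin{itemize}
\item The diagonal classes follow from \Cref{thm:large_rank}(1).
\item The positive cycles, and a positive $2$-cycle with a fixed $\pm1$, come from attaching copies to $S^4$. At a point of the sphere fixed by the $\pi$-rotation, the tangent action is $\mathrm{diag}(-1,-1,1,1)$, which is the same type as $f_X$ at a coordinate point or $J$ at a point of $\rp^2$.
\item Your negative $3$-cycle is exactly the paper's construction in \Cref{cor:m3}.
\item For the order-$4$ class on $M_2$ you take a different route from \Cref{constr:4}, and it works. Take $g=R(\pi/2)\oplus R(\pi)\oplus 1\in SO(5)$ and form the induced equivariant connected sum $S^4\#(\Z/4\times_{\Z/2}\cp^2)$ along a size-$2$ orbit. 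The step you call delicate is immediate. At $p\neq\pm e_5$ on $S^4\cap(P_2\oplus\R e_5)$, $dg^2_p$ is $-1$ on $P_1$ and $+1$ on the sphere. Up to an orientation-reversing isometry this is $J$ at a real point, so $g_*e_1=e_2$ and $g_*e_2=J_*e_1=-e_1$.
\end{itemize}

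\textbf{The gap.} The two $M_3$ classes with a negative $2$-cycle, $\left(\begin{smallmatrix}0&-1&0\\1&0&0\\0&0&\pm1\end{smallmatrix}\right)$, appear in your list but are never realized.

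With sign $+1$ your model extends. $g$ fixes $e_5$ with $dg=R(\pi/2)\oplus R(\pi)$, so you can attach a third $\cp^2$ there carrying $\mathrm{diag}(1,i,-1)$ at $[1:0:0]$.

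With sign $-1$ no model with all copies attached to a linear $S^4$ can work. This includes composing with a commuting $J$-type involution, since the result is again linear on $S^4$. The reasons:
\begin{itemize}
\item Suppose $h\in\diff^+(\cp^2)$ has order $4$ and $h_*=-1$. Then \Cref{prop:ed} and \Cref{thm:gsig} force $\mathrm{Fix}(h^2)=S\sqcup\{x\}$, with $S$ a sphere and $S\cdot S=1$.
\item Since $h_*[S]=-[S]$, the restriction $h|_S$ reverses orientation. At a fixed point $q\in S$ of $h$, $dh_q$ would then be an orientation-reversing map of $T_qS$ times a rotation by $\pm\pi/2$ on the normal plane. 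That has determinant $-1$, a contradiction.
\item Hence every fixed point of $h$ has $dh^2=-I$. So the third copy must be attached at a point where $dg^2=-I$, i.e.\ $g\sim R(\pm\pi/2)\oplus R(\pm\pi/2)\oplus1$.
\item Then $g^2$ fixes only $\pm e_5$, which $g$ also fixes. There is no orbit of size $2$ fixed by $g^2$ at which to attach the swapped pair.
\end{itemize}

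\textbf{A fix.} Make the third copy the centre, with $h[X:Y:Z]=[\bar Y:-\bar X:\bar Z]$.
\begin{itemize}
\item Then $h^2=[-X:-Y:Z]$, and $h$ acts antipodally on $\{Z=0\}$.
\item At the free orbit $\{[1:0:0],[0:1:0]\}$ the stabilizer $h^2$ acts by $(y,z)\mapsto(y,-z)$.
\item The induced connected sum with two copies carrying $J$ therefore gives $g_*=\left(\begin{smallmatrix}0&-1&0\\1&0&0\\0&0&-1\end{smallmatrix}\right)$.
\end{itemize}
The paper instead treats all four classes $\left(\begin{smallmatrix}0&\pm1&0\\\pm1&0&0\\0&0&\pm1\end{smallmatrix}\right)$ in \Cref{constr:m3}. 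It attaches the third copy at a fixed point in the neck $S^3\times\{1/2\}$ of the $M_2$ realization.
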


\noindent Finally, in \Cref{sec:odd-ab} we show the following result, which implies the remainder of parts 1 and 2 of \Cref{thm:mega-thm}.

\begin{theorem}\label{thm:odd-ab}{\bf (General asymptotic non-realizability)} \begin{enumerate}
    \item Random subgroups of $O(H_{M_n})$ are asymptotically almost never realizable in $\diff^+(M_n)$.  The same is true up to conjugacy.
    \item  Random elements of odd order in $O(H_{M_n})$ are asymptotically almost never realizable in $\diff^+(M_n)$.
    \item Along the odd subsequence $(M_{2k+1})$, random abelian subgroups of $O(H_{M_{2k+1}})$ are asymptotically almost never realizable in $\diff^+(M_{2k+1})$.  More precisely, suppose $H\leq O(H_{M_{2k+1}})$ contains some order two element $\phi$ with an odd number of $+1$s on its diagonal as a $(2k+1)$-by-$(2k+1)$ matrix, and suppose that the 2-primary part $H_2\leq H$ does not embed in $SO(4)$.  Then $H$ is not realizable in $\diff^+(M_{2k+1})$, and almost all abelian subgroups of $O(H_{M_{2k+1}})$ are of this form as $k\to\infty$.
\end{enumerate}
\end{theorem}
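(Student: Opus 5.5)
The plan is to prove the three parts in the order 2, 3, 1: part 2 reduces directly to \Cref{sec:sn}, part 3 is a fixed-point obstruction plus a count, and part 1 needs a generation bound that I expect to be the hardest step.

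\textbf{Part 2.} I will reduce odd-order elements of $O(H_{M_n})\cong G_n\rtimes S_n$ to the $S_n$ case. Write an element as $d\sigma$ with $d\in G_n$. Its order is odd only if, on each cycle of $\sigma$ (necessarily of odd length), the product of the signs is $+1$; otherwise some power of the element is $-1$ on that cycle and the order is even. Conversely, if every cycle has sign product $+1$, then conjugating by a suitable diagonal matrix removes all signs cycle by cycle, so $d\sigma$ is conjugate in $O(H_{M_n})$ to $\sigma\in S_n$. For a fixed odd-order $\sigma$, the number of admissible sign vectors $d$ is $2^{n-c(\sigma)}$. Hence a uniformly random odd-order element of $O(H_{M_n})$ projects to $\sigma\in S_n$ with law exactly $\mathbb P_{n,1/2}$ (this is why $\theta=\tfrac12$ was introduced). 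Since realizability is a conjugacy invariant, \Cref{cor:prime-cycle-obstruction} applies to $\sigma$, and \Cref{prop:theta-prime-cycles} with $\theta=\frac12$ finishes part 2.

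\textbf{Part 3 (obstruction).} Let $\phi\in H$ be the given involution and $f$ an involution realizing it inside the lift $\tilde H$. Decompose $H_2(M_n;\Z)$ under $\phi$ as in \Cref{prop:ed}:
\begin{itemize}
\item the diagonal $+1$ entries of $\phi$ are trivial summands;
\item the diagonal $-1$ entries are cyclotomic summands;
\item signed $2$-cycles are regular summands, after a diagonal sign change if needed.
\end{itemize}
Thus $t$ is odd. By \Cref{lem:no-free-involution}, $Fix(f)\neq\emptyset$, so $\beta_0+\beta_2=t+2$ is odd. Each surface component contributes $2$ to $\beta_0+\beta_2$, so there is an odd number of isolated fixed points.

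Since $H$ is abelian, the $2$-primary part $\tilde H_2$ commutes with $f$ and permutes this odd set of isolated points. Being a $2$-group, it has a global fixed point $p$. The tangent representation of $\tilde H_2$ at $p$ is faithful, by the point-and-frame argument in the proof of \Cref{lem:ser}. Hence $H_2\hookrightarrow SO(4)$, a contradiction.

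\textbf{Part 3 (genericity).} I need to show that almost all abelian subgroups of $O(H_{M_{2k+1}})$ contain such a $\phi$ and have $2$-rank at least $4$. The plan is modeled on \Cref{cor:gn}.
\begin{itemize}
\item Abelian subgroups are dominated in number by elementary abelian $2$-subgroups; there are at least $2^{cn^2}$ of them, as in the proof of \Cref{thm:2groups}. So it suffices to work with those.
\item Having rank at most $3$ is negligible, since there are only $2^{O(n\log n)}$ such subgroups.
\item For $n$ odd, an involution with an even number of $+1$'s on the diagonal has an odd number of entries that are $-1$ or lie in $2$-cycles. I will show that the condition "every involution in $H$ has an even number of diagonal $+1$'s" behaves like a linear parity constraint on a large elementary abelian group. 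In $G_n$ this condition already fails for rank at least $2$, by the argument in the proof of \Cref{thm:large_rank}. Then I will check that the same product trick works for commuting signed permutations by computing the fixed diagonal of $\phi\psi$.
\end{itemize}

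\textbf{Part 1.} I will follow the proof of \Cref{thm:2groups}. There are at least $2^{n^2/16+O(n)}$ subgroups of $O(H_{M_n})$, while $|O(H_{M_n})|=2^{O(n\log n)}$, so the statement up to conjugacy follows from the plain statement. The goal is to show that a realizable subgroup is generated by $O(\log n)$ elements. Then there are at most $(2^n n!)^{O(\log n)}=2^{O(n\log^2 n)}$ such subgroups, which is negligible.

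By the Guralnick--Lucchini theorem, $d(H)\le\max_p d(P_p)+1$ over the Sylow subgroups $P_p$. For odd $p$, I will use the fixed-point structure of $p$-groups together with \Cref{thm:ht-formal} and \Cref{prop:ed}. For $p=2$, I take a central involution $f$ of $P_2$ and apply \Cref{lem:rankorbitsize} to get a subgroup of index at most $2n$ stabilizing a fixed component. That subgroup then embeds in $SO(4)$ or acts on a surface with cyclic kernel.

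\textbf{Main obstacle.} Unlike \Cref{prop:2-groups}, fixed surfaces of $f$ may now have large genus, and $2$-groups acting on a genus-$g$ surface can need about $2g$ generators. I will first try to choose the central involution, as in \Cref{lem:2/3}, inside the center's elementary abelian part so that $c\le 2t$. This would yield a component with $\beta_1\le4$, as in \Cref{prop:fullthm}. If the center is too small for that, I will fall back on bounding the number of subgroups whose Sylow $2$-subgroup has small center directly.
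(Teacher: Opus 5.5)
Your part 2 and the obstruction half of part 3 are essentially the paper's arguments. Invoking \Cref{lem:no-free-involution} to get $Fix(f)\neq\emptyset$ is the right move, since $t$ odd alone does not force $t-c+2\neq 0$. The gaps are in the two counting arguments.

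\textbf{Part 3, genericity.} Your reduction to elementary abelian $2$-subgroups is unjustified. A lower bound of $2^{cn^2}$ on their number says nothing about how many \emph{other} abelian subgroups there are. What you need is an upper bound on the abelian subgroups that violate the hypotheses.

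Your estimate that small $2$-rank is negligible because there are only $2^{O(n\log n)}$ such groups is false once odd parts are allowed. Subgroups of $(\Z/3\Z)^{\lfloor n/3\rfloor}$ generated by disjoint $3$-cycles already give $3^{n^2/36+O(n)}$ abelian subgroups with trivial $2$-part.

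The paper closes exactly this hole:
\begin{itemize}
\item By \Cref{lem:x-blocks}, which is your parity step, an abelian $H$ with no odd involution has a unique involution, hence cyclic $H_2$. So every abelian $H$ failing the hypotheses has $3$-generated $H_2$.
\item \Cref{lem:cyclic-sylow} bounds the possible images $\rho(H)\le S_n$ by $2^{(\log_2 3)n^2/9+O(n\log n)}$. It embeds $\rho(H)$ in the product of its regular orbit actions and counts subgroups of that product.
\item \Cref{lem:bounded-lifts} bounds the lifts of each image by $2^{7n}$, counting generators of the $2$-part and using Schur--Zassenhaus for the odd part.
\end{itemize}
Since $(\log_2 3)/9<1/4$, this is negligible against the $2^{n^2/4+O(n)}$ diagonal subgroups. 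Without some such upper bound, part 3 is not proved.

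\textbf{Part 1.} Your route is Guralnick--Lucchini plus $d(P_p)=O(\log n)$ for every Sylow subgroup, giving at most $2^{O(n\log^2 n)}$ realizable subgroups. The paper instead encodes $H$ by $K=H\cap G_n$, $Q=\rho(H)$, and a cocycle in $Z^1(Q,G_n/K)$. It needs a generation bound only for $2$-groups: \Cref{lem:2-group-gen}, then injectivity of restriction to a Sylow $2$-subgroup of $Q$. It bounds $Q$ by the $2^{n^2/16+o(n^2)}$ count of subgroups of $S_n$. Your route is viable and would give a sharper count, but as written the $p=2$ step is left open.

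The obstacle you name there is illusory. A finite group acting faithfully on a closed surface of genus $g\ge 2$ has order at most $168(g-1)$ by \Cref{thm:84(g-1)}, after passing to the orientation-preserving subgroup or the orientation cover. So it needs $O(\log g)$ generators, not about $2g$. Moreover $g\le\beta_1(F)\le c\le n$ by \Cref{prop:ed}, which is precisely the argument of \Cref{lem:2-group-gen}.

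Your proposed workaround would not work in general anyway. The center can be cyclic, and for non-diagonal involutions the bound $\dim V_{-1}\le 2n/3$ only gives $c\le 2t+r$.

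For odd $p$, \Cref{thm:ht-formal} concerns only cyclic actions, so it does not give the bound. The input you actually need is that odd-order elements of $O(H_{M_n})$ are conjugate to permutations. Then $c=0$ in \Cref{prop:ed}, and a central element of order $p$ fixes a nonempty union of at most $n+2$ points and spheres. The same orbit--stabilizer argument then gives $d(P_p)\le 2+\log_p(n+2)$.
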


\subsection{Cyclic realization: small cases}\label{sec:gen-constr}

\begin{constr}\label{constr:4}
    Consider $M_2=\cp_1^2\#\cp_2^2$.  We seek to realize the order 4 mapping class \[\begin{pmatrix}
        0&-1\\1&0
    \end{pmatrix}\in O(2,\Z)\cong O(H_{M_2})\] by a diffeomorphism $f$ of order 4.  Consider coordinates $[X_1:Y_1:Z_1]$ on $\cp^2_1$ and $[X_2:Y_2:Z_2]$ on $\cp^2_2$.  Near points $[0:0:1]$ on each copy, consider local coordinates $\re X_1, \im X_1, \re Y_1, \im Y_1$ on $\cp^2_1$ and $\re X_2, \im X_2, \re Y_2, \im Y_2$ on $\cp^2_2$; call these the standard local bases for $\cp^2_1$ and $\cp_2^2$.  Form $\cp^2_1\#\cp^2_2$ by gluing locally according to the following linear map on local charts from $\cp^2_1$ to $\cp_2^2$, written as a matrix in the standard local bases: \[\begin{pmatrix}
        1&0&0&0\\0&0&1&0\\0&1&0&0\\0&0&0&1
    \end{pmatrix},\] i.e., by identifying unit balls around $[0:0:1]$ in each copy of $\cp^2$ according to this linear map.  Notice there is locally an orientation-reversing diffeomorphism $r$ of $M_2$ in a neighborhood of the connect sum by a reflection on the $S^3$ gluing locus of the connect-sum.  Specifically, it sends the point $(a,b,c,d)$ in the standard basis of $\cp^2_1$ to $(a,c,b,d)$ in the standard basis of $\cp^2_2$, and vice versa.  This map does not, however, extend globally to a diffeomorphism of $M_2$, which does not admit any orientation-reversing diffeomorphisms (since it has non-zero signature, for example).  However, notice that the map exchanging basis elements $\im X_1$ and $\re Y_1$ extends across the connect sum to exchange $\im X_2$ and $\re Y_2$.  Then compose $r$ with this reflection to obtain a map which is orientation-preserving and extends globally to a diffeomorphism $r_2:M_2\to M_2$.  In local coordinates near the connect sum, $r_2$ acts by the simple formula $(a,b,c,d)\mapsto (a,b,c,d)$ in the standard local bases of $\cp^2_1$ and $\cp_2^2$, respectively, and vice versa.

    Finally, construct the sought $f$ by composing $r_2$ with a rotation.  The map $[X_1:Y_1:Z_1]\mapsto [-X_1:Y_1:Z_1]$ on $\cp^2_1$ extends across the connect sum to $[X_2:Y_2:Z_2]\mapsto[\tilde X_2:\tilde Y_2:\tilde Z_2]$ where $\tilde X$ refers to reflection across the imaginary axis (and $\tilde Y,\tilde Z$ similarly).  Then composing this map with $r_2$ gives the desired $f$.
\end{constr}

\vspace{5mm}

\begin{constr}\label{constr:5}
    We generalize \Cref{constr:4} analogously to the generalization from \Cref{constr:2} to \Cref{constr:3}.  First note that \Cref{constr:4} can be employed around any of the three coordinate points $[1:0:0],[0:1:0],[0:0:1]$.  Start with an $M_n$ constructed as in \Cref{constr:3}.  If $n$ is even, there is a unique edge $e$ in the adjacency tree (which, recall, records which copies of $\cp^2$ are connected locally) which divides the tree into two trees of equal size.  If the tree is symmetric about this edge (meaning it admits an isometry which reflects the edge), then apply \Cref{constr:4} locally to the connect sum at $e$.  In that case, the action extends to a well-defined diffeomorphism of $M_n$.
\end{constr}

\vspace{5mm}

\begin{cor}\label{cor:m2}
    All cyclic groups in $O(H_{M_2})\cong O(2,\Z)$ are realizable in $\diff^+(M_2)$.
\end{cor}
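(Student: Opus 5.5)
The plan is to list the conjugacy classes of cyclic subgroups of $O(H_{M_2})\cong O(2,\Z)$ and realize a representative of each. This suffices because Nielsen realizability is a conjugacy invariant in $O(H_{M_2})$, by Baraglia's surjectivity result recalled in the introduction. The group $O(2,\Z)$ is the dihedral group of order $8$, generated by the signed permutation matrices. Its nontrivial elements fall into the following conjugacy classes:
\begin{itemize}
\item the central element $-I$;
\item the class $\{\mathrm{diag}(1,-1),\mathrm{diag}(-1,1)\}$;
\item the class of the swaps $\pm\sigma$, where $\sigma=\begin{pmatrix}0&1\\1&0\end{pmatrix}$;
\item the class of the two order-$4$ rotations $\pm\begin{pmatrix}0&-1\\1&0\end{pmatrix}$.
\end{itemize}
Since a cyclic group is determined by a generator, and the two rotations generate the same subgroup, there are five cyclic subgroups up to conjugacy, counting the trivial one.

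Next I realize each class. The trivial group is realized by the identity. The elements $-I$ and $\mathrm{diag}(-1,1)$ lie in $G_2$ and generate subgroups of rank $1$, so they are realized by the first part of \Cref{thm:large_rank}. Concretely, $-I$ is realized by $J$ on a standard connect sum, and $\mathrm{diag}(-1,1)$ by $J$ on $\cp^2_1$, which extends as $f_X$ on $\cp^2_2$ in the hinge model of \Cref{constr:1}.

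The swap $\sigma$ lies in $S_2$, so it is realized by \Cref{prop:polygon}(1) with $k=2$. Alternatively, it can be realized directly by connect-summing two copies of $\cp^2$ at antipodal points of $S^4$ and rotating by $\pi$. By conjugacy this also covers $-\sigma$.

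Finally, the order-$4$ rotation $\begin{pmatrix}0&-1\\1&0\end{pmatrix}$ is realized by the order-$4$ diffeomorphism $f$ of \Cref{constr:4}. This step carries the only real content, and it is the main obstacle. I would check two things about $f$:
\begin{itemize}
\item $f$ has order exactly $4$. This follows because $f^2$ restricts to $J'$-type or $f_X$-type involutions composed with themselves on each factor.
\item $f$ acts on homology by the stated matrix. It exchanges the two factors, and exactly one of the hyperplane classes picks up a sign, coming from the reflection $\tilde{X}$ (which acts by $-1$ on $H_2$, like $J'$), while $f_X$ acts by $+1$.
\end{itemize}
Once $f$ is shown to have order $4$ and to induce the order-$4$ rotation, every cyclic subgroup is accounted for and the corollary follows.
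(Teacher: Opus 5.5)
Your proposal is correct and follows essentially the paper's route. The diagonal classes come from the first part of \Cref{thm:large_rank}, the order-$4$ rotation from \Cref{constr:4}, and the swap from an explicit rotation construction. Your antipodal rotation of $S^4$ is cleaner to cite as part 4 of \Cref{prop:polygon} than as part 1 with $\ell=0$. The only real difference is that you obtain $-\sigma$ from $\sigma$ by conjugacy invariance, where the paper composes with complex conjugation on both copies.

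One small fix to your order check: on each factor, $f^2$ restricts to the composite of $f_X$ with the antiholomorphic reflection $J'$, not to a map composed with itself. This composite is an involution because $f_X$ and $J'$ are commuting involutions, so $f^4=1$, while $f^2\neq 1$ since it acts by $-I$ on $H_2$.
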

\begin{proof}
    By \Cref{thm:large_rank} and \Cref{constr:4}, it suffices to construct lifts of the matrices \[\epsilon\begin{pmatrix}
        0&1\\1&0
    \end{pmatrix}\] for $\epsilon=\pm1$.  For $\epsilon=1$, attach the two copies of $\cp^2$ with the standard connect sum of \Cref{constr:1}, and then perform two reflections locally in a neighborhood $S^3\times [0,1]$ and then extend: one which swaps the two copies by reflecting over the gluing locus $S^3$, and the other which reflects in some standard basis element of each local coordinate chart.  Equivalently, swap the two local charts around the gluing loci by a rotation inside some ambient $\R^5$.  See \Cref{fig:m2}.  For $\epsilon=-1$, simply compose by complex conjugation on both copies.
\end{proof}

\begin{figure}[hbt!] 
    \centering
    \includegraphics[width=0.5\textwidth]{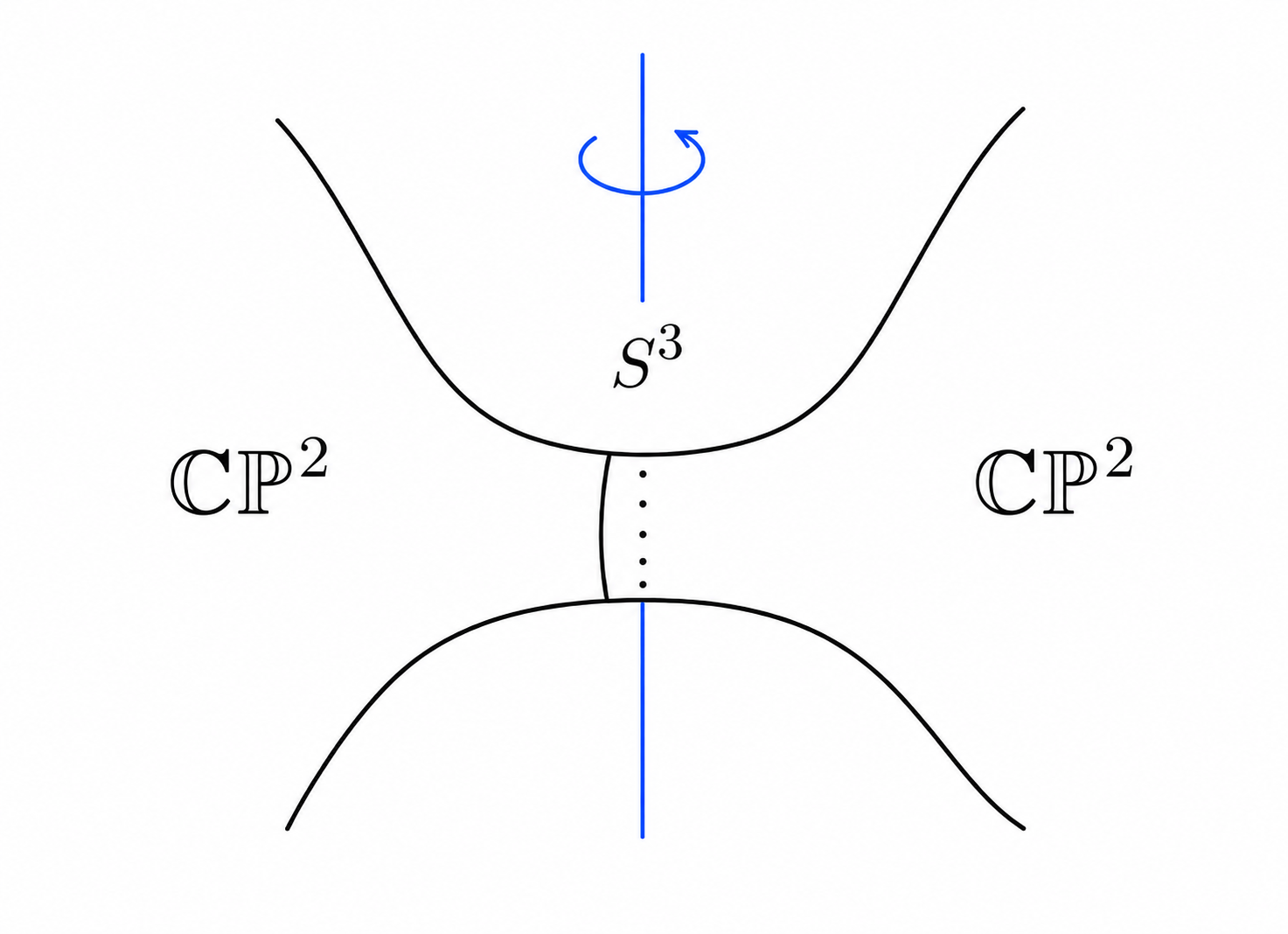}
    \caption{\Cref{cor:m2}, when $\epsilon=+1$: in a local model swap two copies of $\cp^2$ by rotation on an axis}
    \label{fig:m2} 
\end{figure}

\begin{constr}\label{constr:m3}
    We show how to realize all elements of $O(H_{M_3})$ of the form \begin{equation}
        \begin{pmatrix}
        0&\pm 1 &0\\\pm1&0&0\\
        0&0&\pm1
    \end{pmatrix}
    \end{equation} for all combinations of signs.

    The key to a simple geometric construction is to adjust our local model of connect sum.  In previous constructions, we modeled connect sum by directly gluing a copy of $S^3$ in each component manifold.  We now change this picture by adding a tube $S^3\times I$ between the two manifolds.
    
    Applying this to $\cp^2\#\cp^2$, the tube inherits a set of coordinates from the standard coordinate charts on the gluing loci of each copy of $\cp^2$. All earlier constructions on $M_2$ can be rephrased naturally in terms of this tube perspective.  In realizing any matrix of the form \[\begin{pmatrix}
      0&\pm 1\\\pm1&0  
    \end{pmatrix}\] on $\cp^2\#\cp^2$ as in \Cref{constr:4} and the proof of \Cref{cor:m2}, there is a fixed point $p\in S^3\times\{\frac{1}{2}\}\subset S^3\times I$.  Attach a third copy of $\cp^2$ in an invariant neighborhood around such $p$, gluing along standard projective coordinates.  A choice of parity in this identification then completes the construction.  See \Cref{fig:m3}.
\end{constr}

\begin{figure}[hbt!] 
    \centering
    \includegraphics[width=0.7\textwidth]{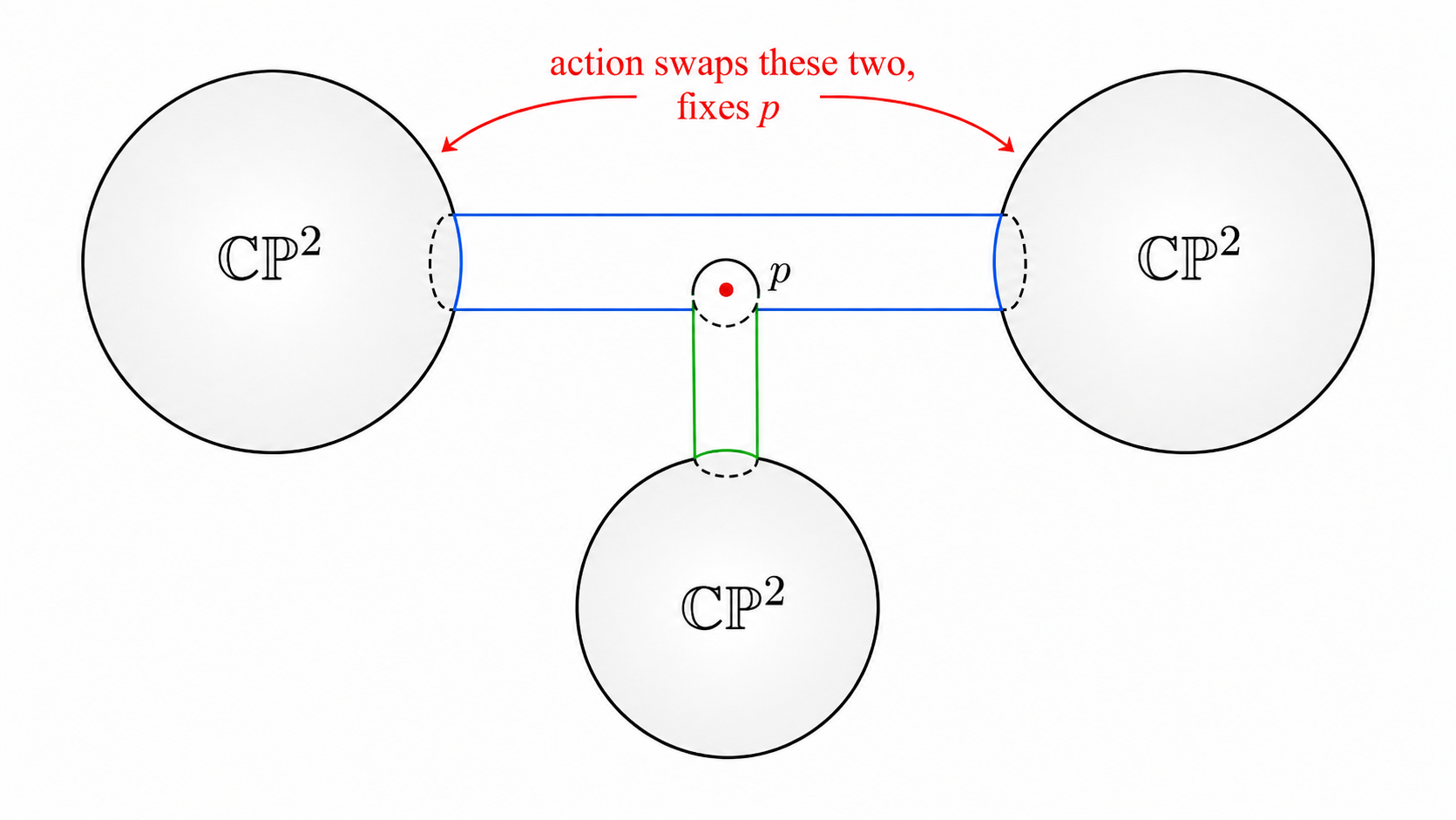}
    \caption{\Cref{constr:m3}}
    \label{fig:m3} 
\end{figure}

\vspace{5mm}

\begin{cor}\label{cor:m3}
    All cyclic groups in $O(H_{M_3})\cong O(3,\Z)$ are realizable in $\diff^+(M_3)$.
\end{cor}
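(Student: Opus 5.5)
The plan is to reduce to a short list of conjugacy representatives and realize each one by a construction already in the paper. Nielsen realizability is a conjugacy invariant in $O(H_{M_3})$, since $\pi$ is surjective by Baraglia. So it suffices to treat one representative of each conjugacy class of elements of $O(3,\Z)\cong G_3\rtimes S_3$. Write a signed permutation matrix as $\epsilon\sigma$ with $\sigma\in S_3$ and $\epsilon\in G_3$, and sort by the cycle type of $\sigma$.

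\textbf{The case $\sigma=1$.} The element lies in $G_3$, so it generates a subgroup of rank at most $1$. It is realizable by the first part of \Cref{thm:large_rank}, or directly by \Cref{eg:g3}, which realizes all of $G_3$.

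\textbf{The case where $\sigma$ is a transposition.} Conjugating by $S_3$, we may take $\sigma=(12)$. The element is then a block matrix: a signed transposition on $e_1,e_2$ together with a sign $\pm1$ on $e_3$. Conjugating by the diagonal matrix $\mathrm{diag}(1,-1,1)\in G_3$ lets us absorb signs within the $2\times 2$ block. Up to conjugacy, the block is therefore one of
\[\begin{pmatrix}0&1\\1&0\end{pmatrix},\quad -\begin{pmatrix}0&1\\1&0\end{pmatrix},\quad \begin{pmatrix}0&-1\\1&0\end{pmatrix},\]
and the last of these has order $4$. The first two are handled by \Cref{constr:m3}, which covers every sign pattern of the form displayed there. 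For the order-$4$ block, I would repeat the tube argument of \Cref{constr:m3} with the diffeomorphism of \Cref{constr:4} in place of the involutions from \Cref{cor:m2}. The one point to check is that this order-$4$ map has a fixed point $p$ on the middle sphere $S^3\times\{\tfrac12\}$ of the tube. Given such a $p$, we attach the third $\cp^2$ equivariantly at $p$, using either a standard or a hinge identification, so that the induced action on $e_3$ is $+1$ or $-1$ as required. This is the step I expect to be the main obstacle. The local formula in \Cref{constr:4} is somewhat garbled, so we must verify directly that the composite map has a fixed point in the neck, and that its tangent representation there is realized by a diffeomorphism of $\cp^2$ fixing a point with the matching rotation data. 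For $e_3\mapsto +e_3$ this should be a linear map such as $f_X\circ(\text{swap})$; for $e_3\mapsto -e_3$ it should be $J'$-type. If no such fixed point exists, I would instead note that the square of the map is $\mathrm{diag}(-1,-1,1)$ and attach at an invariant point chosen on a fixed sphere of that square.

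\textbf{The case of a $3$-cycle.} Take $\sigma=(123)$. The class of $\epsilon\sigma$ under conjugation by $G_3$ is determined by the product of the signs, so up to conjugacy the element is $\pm\sigma$. For $+\sigma$, use \Cref{prop:polygon}(1) with $C_3$: attach three copies of $\cp^2$ at equally spaced points of a geodesic circle in $S^4$ and rotate. For $-\sigma$, compose that rotation with complex conjugation $J$ applied simultaneously on all three copies. For this to be well defined, the attaching points should be real points and the gluings $J$-equivariant; $J$ is then an involution commuting with the rotation, and $-\sigma$ has order $6$, as required. Together these cases exhaust the conjugacy classes of $O(3,\Z)$.
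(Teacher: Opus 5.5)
Your case split matches the paper's. Diagonal classes come from \Cref{thm:large_rank}, and transposition classes from \Cref{constr:m3}. The two $3$-cycle classes come from rotating $S^4$; for the negative one, compose with an involution that is $J$ on each summand (on the $S^4$ core it is the half-turn in the $e_3e_4$-plane, as in the paper). All of that is fine. The gap is the step you flagged, and it genuinely fails for the class of
\[\phi=\begin{pmatrix}0&-1&0\\1&0&0\\0&0&-1\end{pmatrix}.\]
No equivariant attachment of a third $\cp^2$ at a fixed point of an order-$4$ map on $M_2$ can produce $\phi$.

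Here is why. Let $f$ have order $4$ on $M_2$ and realize $\left(\begin{smallmatrix}0&-1\\1&0\end{smallmatrix}\right)$. Then $f^2$ realizes $-I$. By \Cref{lem:no-free-involution}, \Cref{prop:ed} and \Cref{thm:gsig}, $\mathrm{Fix}(f^2)$ is a single surface $K$ with $\beta_1=2$ and $K\cdot K=-2$. So $K$ is a Klein bottle, since a torus cannot have negative square in a positive definite form. In particular there is no fixed sphere of the square to fall back on. Attaching at a point fixed by $f^2$ but not by $f$ would instead require two new summands.

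Fixed points of $f$ do exist, since $L(f)=2$, and they lie on $K$. At such a point, $df$ is a quarter-turn on the normal plane and an orientation-preserving involution of $TK$. It cannot be the identity on $TK$, since then $f$ would fix the non-orientable $K$ pointwise while putting a complex structure on its normal bundle. So $df=\mathrm{Rot}(\pi)\oplus\mathrm{Rot}(\pm\pi/2)$. The third summand would therefore need an order-$4$ map $g$ with $g_*=-1$ having a fixed point of this type.

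That is impossible. Here $g^2$ is a nontrivial, homologically trivial involution of $\cp^2$, so its fixed set is $S\sqcup\{pt\}$ with $S\cdot S=1$. A fixed point of $g$ of this type must lie on $S$ with $dg=-I$ on $TS$. But $g_*[S]=-[S]$ forces $g|_S$ to reverse orientation. ($J'$ has order $2$, so it could not match in any case.) The ``choice of parity'' in \Cref{constr:m3} meets the same obstruction, so citing that construction does not close this case either.

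The fix is to swap roles: put the order-$4$ map on the third copy and attach the other two along a free orbit.
\begin{itemize}
\item Take $g[X:Y:Z]=[\bar Y:-\bar X:\bar Z]$ on $\cp^2_3$. Then $g$ has order $4$ and $g_*=-1$. Its square $g^2=[X:Y:-Z]$ fixes the line $L=\{Z=0\}$ pointwise, and $g$ acts on $L$ by the antipodal map.
\item Pick $x\in L$. Glue $\cp^2_1$ at $x$ by an orientation-reversing isometry $\psi$ intertwining $J$ at a real point with $d(g^2)_x$; both are reflections in a $2$-plane. Glue $\cp^2_2$ at $g(x)$ by $g\circ\psi$.
\item Define $f=g$ on the third summand, $f=\mathrm{id}:\cp^2_1\to\cp^2_2$, and $f=J:\cp^2_2\to\cp^2_1$. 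These pieces are compatible because $\psi J=g^2\psi$.
\end{itemize}
Then $f$ has order $4$ and $f_*=\phi$, since $e_1\mapsto e_2\mapsto -e_1$ and $e_3\mapsto -e_3$.
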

\begin{proof}
    Note that realizability is conjugacy-invariant.  Conjugacy classes in $O(3,\Z)$ correspond to signed cycle data: an underlying permutation $\sigma\in S_3$ with $k$ cycles, and signs $\{\epsilon_i\}_{i=1}^k$, where $\epsilon_i\in\F_2$ gives the parity of the number of $(-1)$s on each cycle.  Then by \Cref{thm:large_rank}, \Cref{constr:5}, and the standard-linear actions of \Cref{sec:cyclic}, it suffices to show that the matrix \[\begin{pmatrix}
        0&-1&0\\0&0&-1\\-1&0&0
    \end{pmatrix}\] is realizable. To do this, consider $S^4\subseteq \R^5$ equipped with a standard basis $e_1,\dots,e_5$, and with three marked points $p_1,p_2,p_3\in S^4$ placed in an equilateral triangle in the $e_1,e_2$-plane.  There is then an action on $S^4$ given by rotating in the $e_1,e_2$-plane by $2\pi/3$, composed with reflections in the vectors $e_3$ and $e_4$. Then glue three copies of $\cp^2$ equivariantly at $p_1,p_2,p_3$, such that the reflections in $e_3,e_4$ extend together    to perform complex conjugation on each copy.  (In fact, this construction extends to realize the element $-Id\cdot (12\dots n)\in O(H_{M_n})$ for any $n$.)
\end{proof}

\subsection{Proving \Cref{thm:odd-ab}}\label{sec:odd-ab}

\begin{lem}\label{lem:2-group-gen}
    There is a constant $C>0$ so that if $P\leq O(H_{M_n})$ is a 2-subgroup realized by a group $\tilde P\leq \diff^+(M_n)$, then $d(P)\leq C\log n$, where $d(P)$ is the size of a minimal generating set for $P$.
\end{lem}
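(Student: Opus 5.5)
We will follow the proof of \Cref{prop:2-groups} and \Cref{lem:2groups}, now for an arbitrary 2-subgroup $P\leq O(H_{M_n})$ rather than one inside $S_n$. Since $P$ is a nontrivial 2-group (if trivial there is nothing to show), its center contains an involution $\phi$, and we let $f\in\tilde P$ be its realizing involution. By \Cref{lem:no-free-involution}, $Fix(f)\neq\emptyset$, so by \Cref{prop:ed} the fixed set consists of $k$ isolated points and finitely many surfaces. Moreover $\beta_0+\beta_2=t+2\leq n+2$ and $\beta_1=c\leq n$. Because $\phi$ is central, $\tilde P$ permutes the components of $Fix(f)$, and it preserves the type of each component (point, or surface of a given diffeomorphism type). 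The number of components of any fixed type is at most $n+2$.

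Pick any type that occurs, say with $N\leq n+2$ components. As in \Cref{lem:rankorbitsize}, the smallest $\tilde P$-orbit on these components has size a power of 2 at most $N$. So there is a subgroup $H\leq P$ of index at most $N\leq n+2$ stabilizing a single component $F$.

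If $F$ is a point, $H$ embeds into $SO(4)$ via the tangent representation. If $F$ is a surface, the argument from \Cref{prop:fullthm} and \Cref{prop:2-groups} applies: the pointwise stabilizer $K$ of $F$ acts faithfully on the normal fiber at a point, hence embeds in $O(2)$. Since $K$ preserves the normal orientation of $F$ up to the involution, $K$ is cyclic, or at worst dihedral, and so it is generated by at most 2 elements. The quotient $H/K$ acts faithfully on the closed surface $F$, whose first Betti number is at most $n$.

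In the point case, every finite 2-subgroup of $SO(4)$ is generated by a bounded number of elements. This follows from the description via $SU(2)\times SU(2)$: the 2-subgroups of $SU(2)$ are cyclic or generalized quaternion, so a bounded number of generators suffices. In the surface case, we need that a finite 2-group acting faithfully on a closed surface $F$ (orientable or not; pass to the orientation double cover, which costs one generator) is generated by $O(1)$ elements, or at least $O(\log n)$.

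This surface step is the main obstacle. Unlike \Cref{prop:fullthm}, the component $F$ may have genus up to about $n$, so we cannot pick a low-genus surface. For genus $0$ or $1$, uniformization gives boundedness. For genus $g\geq 2$, Riemann--Hurwitz with the $84(g-1)$ bound (\Cref{thm:84(g-1)}) gives $|H/K|\leq 84(g-1)\leq 84n$, hence $d(H/K)\leq\log_2(84n)$. Either way $d(H)\leq d(K)+d(H/K)=O(\log n)$.

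Finally, exactly as in \Cref{lem:2groups}, we refine $H\leq P$ into a chain of index-2 steps, of length $\log_2[P:H]\leq\log_2(n+2)$. Each step adds at most one generator, giving $d(P)\leq d(H)+\log_2(n+2)\leq C\log n$ for a uniform $C$ and all $n\geq 2$. We then enlarge $C$ to handle $n=1$ trivially, since $O(1,\Z)$ is cyclic.
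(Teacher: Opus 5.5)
Your argument is correct and is essentially the paper's proof. You take a central involution, whose fixed set is nonempty by \Cref{lem:no-free-involution}, and pass to a subgroup of index at most $n+2$ stabilizing one fixed component. A point gives bounded generation inside $SO(4)$. For a surface, the pointwise stabilizer $K$ is cyclic, and $|H/K|=O(n)$ by the $84(g-1)$ bound; note that $K$ in fact lies in $SO(2)$, since it is trivial on $T_xF$ and orientation-preserving, so no dihedral case arises. The index-$2$ chain then finishes the argument.

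One small correction: enlarging $C$ cannot handle $n=1$, because $\log 1=0$ while $O(1,\Z)\cong\Z/2\Z$ is realized by complex conjugation on $\cp^2$. The bound, in the paper's statement as well, should be read for $n\geq 2$ or as $d(P)\leq C\log(n+1)$.
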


\begin{proof}
    Let $\abs{P}=2^m$ be realizable in $\diff^+(M_n)$, and let $\phi\in P$ be a central involution, realized by $\tilde \phi\in\diff^+(M_n)$.  By \Cref{lem:no-free-involution}, $\tilde\phi$ does not act freely on $M_n$.  Since $\phi$ is central, $\tilde P$ acts on $M_n^{\ang{\tilde\phi}}$, and so by \Cref{prop:ed}, \[\beta_0(M_n^{\ang{\tilde\phi}})\leq \beta_0(M_n^{\ang{\tilde\phi}})+\beta_2(M_n^{\ang{\tilde\phi}})=t+2\leq n+2.\]  Then $M_n^{\ang{\tilde\phi}}$ has at most $n+2$ components, and so there is a subgroup $P_0\leq P$ of index at most $n+2$ such that $\tilde{P_0}$ stabilizes a connected component $F\leq M_n^{\ang{\tilde\phi}}$.  If $F$ is a point then $P_0$ embeds into $SO(4)$ and hence is 4-generated because it is a 2-group.
    
    If $F$ is a surface, then let $\tilde K\leq\tilde{P_0}$ be the kernel of the action of $\tilde {P_0}$ on $F$.  Then $K$ acts faithfully on the normal bundle of $F$, and so $K$ embeds into $SO(2)$ and is cyclic.  Since $\tilde P_0/\tilde K$ acts faithfully on $F$, then \[\abs{P_0/K}\leq C'\beta_1(S)\leq C'n\] for some constant $C'> 0$, when $F$ is hyperbolic.  The first inequality follows for high genus by \Cref{thm:84(g-1)} (passing to the orientation double cover if $S$ is non-orientable), and the second inequality follows by \Cref{prop:ed} since $n=t+c+2r$.  Then since $K$ is cyclic, \[d(P_0)\leq d(K)+d(P_0/K)\leq 1+\log_2\abs{P_0/K}=O(\log n).\]  When $F$ is $S^2, T^2, \rp^2$, or $N_2$, then $d(P_0/K)$ is uniformly bounded by uniformizing to a round or flat metric, and so $d(P_0)=O(\log n)$ trivially in this case.  Since $[P:P_0]\leq n+2$ and $P$ is a 2-group, $P$ can be generated by $P_0$ and at most $\lceil\log_2(n+2)\rceil$ additional generators.  So $d(P)=O(\log n)$.
\end{proof}

\begin{lem}\label{lem:count-by-cohomology}
    The number of subgroups $H\leq O(H_{M_n})$ realizable in $\diff^+(M_n)$ is at most \[2^{n^2/16+o(n^2)}.\]
\end{lem}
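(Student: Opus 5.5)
Our plan is to bound the number of realizable subgroups by counting subgroups of $O(H_{M_n})\cong G_n\rtimes S_n$ with few generators, modulo contributions that can be controlled using Lemma \ref{lem:2-group-gen}. The target exponent $n^2/16$ is the classical count of subgroups of an elementary abelian $2$-group of rank about $n/2$ (the graph-of-linear-maps count used in the proof of Theorem \ref{thm:2groups}). So the natural strategy is to show that a realizable $H$ is determined by a small amount of data beyond what a rank-$\le n/2$ elementary abelian section allows. We will handle $H\cap G_n$ separately from the image $\bar H$ of $H$ in $S_n$.

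\textbf{Step 1: the image $\bar H\leq S_n$.} Every subgroup of $S_n$ is generated by at most $n-1$ elements; in fact a subgroup of $S_n$ can be generated by $\lfloor n/2\rfloor$ elements for $n\geq 4$. That alone only gives $(n!)^{n/2}=2^{O(n^2\log n)}$ choices, which is too many. Instead we will use the known count (Pyber) that $S_n$ has at most $2^{n^2/16+o(n^2)}$ subgroups. We therefore accept $\bar H$ as arbitrary, with at most $2^{n^2/16+o(n^2)}$ choices. The remaining task is to show that, given $\bar H$, the lift $H$ has only $2^{o(n^2)}$ possibilities when $H$ is realizable.

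\textbf{Step 2: the kernel $H\cap G_n$.} Since $H\cap G_n$ is a $2$-subgroup, Lemma \ref{lem:2-group-gen} gives $d(H\cap G_n)\leq C\log n$, and it is elementary abelian in $G_n\cong\F_2^n$. Hence the number of possibilities is at most $2^{nC\log n}=2^{o(n^2)}$. Alternatively, part 2(c) of Theorem \ref{thm:large_rank} bounds its rank by $8+\log_2 n$ directly.

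\textbf{Step 3: the extension data.} Given $\bar H$ and $N=H\cap G_n$, the subgroup $H$ corresponds to a complement of $G_n/N$ in the preimage of $\bar H$ modulo $N$, that is, to a $1$-cocycle class. Such lifts are parametrized by functions on a generating set of $\bar H$ with values in $G_n/N$. Using a generating set of $\bar H$ of size $O(\log n)$ would give $2^{O(n\log n)}$ choices, but $\bar H$ need not be $O(\log n)$-generated; arbitrary subgroups of $S_n$ can require about $n/2$ generators. This is the main obstacle. Our first attempt will use the cohomological count $|Z^1(\bar H,\F_2^n/N)|\le |\F_2^n|^{d(\bar H)}=2^{n\cdot d(\bar H)}$ with $d(\bar H)\leq n/2$, which gives a total of $2^{n^2/2}$ and is too big.

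To repair this, we will use realizability again: the Sylow $2$-subgroup $P$ of $H$ satisfies $d(P)=O(\log n)$ by Lemma \ref{lem:2-group-gen}. By Gaschütz-type arguments, $H^1(\bar H,V)$ for a $2$-module $V$ injects into $H^1(\bar P,V)$ via restriction. The number of cocycle classes on $\bar P$ is at most $2^{n\,d(\bar P)}\le 2^{O(n\log n)}$, and the coboundaries contribute at most $|V|=2^n$. Hence, given $\bar H$ and $N$, there are $2^{o(n^2)}$ lifts. Multiplying the three counts yields the bound $2^{n^2/16+o(n^2)}$. We expect the delicate point to be justifying that $d(\bar P)$ is controlled, since $\bar P$ is a quotient of $P$, so $d(\bar P)\le d(P)$; and also verifying that restriction to the Sylow subgroup is injective on $H^1$ with $\F_2$-coefficients, which is the standard transfer argument.
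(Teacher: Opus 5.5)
Your proposal is correct and follows essentially the same route as the paper. You bound $N=H\cap G_n$ via Lemma \ref{lem:2-group-gen}, bound the image $\bar H\leq S_n$ by Pyber's $2^{n^2/16+o(n^2)}$ count, and then count lifts as elements of $Z^1(\bar H,G_n/N)$, using injectivity of restriction to a Sylow $2$-subgroup with $O(\log n)$ generators together with at most $2^n$ coboundaries. One minor wording point: complements correspond to cocycles rather than cocycle classes, but since you multiply the $H^1$ bound by $|B^1|\le 2^n$, your count is already a count of $Z^1$, as it should be.
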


\begin{proof}
    Let $H\leq O(H_{M_n})$ be realizable in $\diff^+(M_n)$.  Let $\rho:O(H_{M_n})\to S_n$ be the sign-forgetting map.  Let $K=H\cap G_n=\ker(\rho|_H)$, and let $Q=\rho(H)\leq S_n$.  By \Cref{lem:2-group-gen}, $\log_2\abs{K}\leq C \log n$ for a constant $C>0$. Further, $K$ is an elementary abelian 2-group.  Then the total number of possibilities for $K$ is at most \[\sum_{j\leq C\log n}\binom{n}{j}_2\leq 2^{O(n\log n)}.\] By \cite{sn, Theorem 1}, the number of possibilities for $Q$ is at most the number of subgroups of $S_n$, which is \[\abs{\text{Sub}(S_n)}=2^{n^2/16+o(n^2)}.\]  Now fix $K$ and $Q$; we count the number of possibilities for $H$.  Let $M=G_n/K$; then $H/K\leq M\rtimes Q$, intersecting $M$ trivially and projecting isomorphically onto $Q$.  In other words, $H/K$ is given as the graph of a crossed homomorphism, a 1-cocycle $\alpha\in Z^1(Q,M)$.  Then the number of possible $H$ is bounded by $\abs{Z^1(Q,M)}$.
    
    Let $P_H\leq H$ be a Sylow-2 subgroup.  Then $P_Q=\rho(P_H)\leq Q$ is also a Sylow-2 subgroup, and so by \Cref{lem:2-group-gen}, \[d(P_Q)\leq d(P_H)\leq C\log n.\]  The restriction map $\text{res}:H^1(Q,M)\to H^1(P_Q,M)$ is injective: the composition of restriction and corestriction is multiplication by the index $[Q:P_Q]$.  Since $P_Q$ is a Sylow-2 subgroup, this index is odd, and hence the composition acts as the identity on the $\F_2$-vector space $H^1(Q,M)$.  Then \[\dim_{\F_2}H^1(Q,M)\leq \dim_{\F_2}H^1(P_Q,M).\]  Since a 1-cocycle is determined by its values on generators, \[\dim_{\F_2}Z^1(P_Q,M)\leq d(P_Q)\dim_{\F_2}(M)\leq Cn\log n.\]  Then \[\dim H^1(Q,M)\leq Cn\log n.\]  Note also that $\dim B^1(Q,M)\leq\dim M\leq n$.  As $H^1(Q,M)=Z^1(Q,M)/B^1(Q,M)$, then $\dim Z^1(Q,M)\leq O(n\log n)$, and so \[\abs{Z^1(Q,M)}\leq 2^{O(n\log n)}.\]  Multiplying the counts for $K,Q$ and the cocycle gives that the number of realizable groups in $O(H_{M_n})$ is at most \[2^{O(n\log n)}2^{n^2/16 + o(n^2)}2^{O(n\log n)}=2^{n^2/16 + o(n^2)}.\] 
\end{proof}

\begin{proof}[Proof of part 1 of \Cref{thm:odd-ab}]
    \Cref{lem:count-by-cohomology} says that at most $2^{n^2/16 + o(n^2)}$ subgroups of $O(H_{M_n})$ are realizable in $\diff^+(M_n)$.  But even just by considering subgroups of $G_n$ with rank $\lfloor n/2\rfloor$, we find that $O(H_{M_n})$ has at least $\binom{n}{\lfloor n/2\rfloor}_2=2^{n^2/4+O(n)}$ subgroups.  Finally, calculate \[\frac{2^{n^2/16 + o(n^2)}}{2^{n^2/4+O(n)}}=2^{-3n^2/16+o(n^2)}\to 0.\]
    To consider conjugacy classes, note that the number of conjugacy classes of diagonal subgroups of $B_n$ is at least $1/n!$ times the number of diagonal subgroups, giving at least \[2^{n^2/4 - O(n
    \log n)}\] conjugacy classes.  On the other hand, the number of realizable conjugacy classes of subgroups is at most the number of realizable subgroups.  Then by the same calculation, \[\frac{2^{n^2/16 + o(n^2)}}{2^{n^2/4-O(n\log n)}}=2^{-3n^2/16+o(n^2)}\to 0.\]
\end{proof}

\begin{proof}[Proof of part 2 of \Cref{thm:odd-ab}]

    Let $\rho:O(H_{M_n})\to S_n$ be the sign-forgetting map.  Let $\phi\in O(H_{M_n})$ have odd order, and set $\sigma=\rho(\phi)$.  Then $\sigma$ has odd order.  On a cycle $C=(i_1\cdots i_d)$ of $\sigma$, write \[\phi(e_{i_j})=\epsilon_j e_{i_{j+1}},\qquad \epsilon_j\in\{\pm 1\}.\]Then $\phi^d$ acts on the span of this cycle by multiplication by $\prod_j\epsilon_j$.  Hence $\phi$ has odd order precisely when every cycle of $\sigma$ has sign product $+1$, equivalently an even number of minus signs.

    For fixed $\sigma$, the number of odd order signed lifts is therefore \[\prod_{C}2^{|C|-1}=2^{n-c(\sigma)},\]where $c(\sigma)$ is the number of cycles of $\sigma$.  Thus the pushforward of the uniform measure on odd order elements of $O(H_{M_n})$ to $S_n$ is proportional to $2^{n-c(\sigma)}$, i.e. it is exactly $P_{n,1/2}$.

    Moreover, every such lift $\phi$ of $\sigma$ is conjugate in $O(H_{M_n})$ to the ordinary permutation matrix $\sigma$, by changing signs along each cycle.  Since realizability is invariant under conjugacy in $O(H_{M_n})$, a realizable $\phi$ projects to a realizable odd order permutation $\sigma$.  The result now follows from \Cref{cor:prime-cycle-obstruction} and \Cref{prop:theta-prime-cycles} with $\theta=1/2$.
\end{proof}

\begin{rmk}
    The bounds and proof of part 2 of \Cref{thm:asym} carry over with no change to odd order elements of $O(H_{M_n})$, since conjugacy classes of odd order elements in $S_n$ are in bijection with conjugacy classes of odd order elements in $O(H_{M_n})$.  (In fact, this bijection is induced by the projection map $\rho:O(H_{M_n})\to S_n$.  In particular, the asymptotic probability that a random odd order element of $O(H_{M_n})$ is realizable in $\diff^+(M_n)$, up to conjugacy, is at most $e^{-1/2}$. \Cref{conj} can then alternatively be phrased in terms of odd order elements of $O(H_{M_n})$.
\end{rmk}

\noindent In the rest of this subsection, always assume $n$ is odd.  The proof we now present of part 3 of \Cref{thm:odd-ab} is similar in spirit to that of \Cref{thm:large_rank}.

\begin{rmk}
    One simple source of examples for abelian subgroups not contained in either $G_n\leq O(H_{M_n})$ or $S_n\leq O(H_{M_n})$ is the natural inclusion \[\iota_{n,k}: O(H_{M_n})\times O(H_{M_k})\into O(H_{M_{n+k}}),\] by considering $H_1\times H_2$ for abelian subgroups $H_1\leq O(H_{M_n})$ and $H_2\leq O(H_{M_k})$. Even though these maps are group-theoretically basic, the Nielsen realization problem behaves chaotically with respect to the inclusion maps $\iota_{n,k}$.  Sometimes for $\sigma\in O(H_{M_n})$, $\sigma$ is not realizable in $\diff^+(M_{n})$ but $\iota(\sigma,Id)$ is realizable in $\diff^+(M_{n+k})$ for $k$ sufficiently large \cite{ht}.  On the other hand, \Cref{thm:large_rank} implies that for $H_1\leq G_n$ and $H_2\leq G_k$ both realizable, $\iota_{n,k}(H_1,H_2)$ is often not realizable in $\diff^+(M_{n+k})$.
\end{rmk}

\begin{lem}\label{lem:x-blocks}
    Let $n\geq 1$ be odd, and let $H\leq O(H_{M_n})$ be abelian.  Suppose $1\neq \phi,\psi\in H$ are distinct with $\phi^2=\psi^2=1$.  Then at least one of $\phi, \psi,$ or $\phi\psi$ has an odd number of $+1$s along its diagonal as a signed permutation matrix in $O(H_{M_n})$.
\end{lem}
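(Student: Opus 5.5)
The approach is to let the Klein four-group $V=\ang{\phi,\psi}\cong(\Z/2\Z)^2$ act on coordinate lines and reduce the statement to a parity count over orbits. Since $H$ is abelian, $\phi$ and $\psi$ commute, and they are distinct nontrivial involutions, so $V$ has order $4$. A signed permutation matrix permutes the $n$ coordinate lines $\{\pm e_1\},\dots,\{\pm e_n\}$. For $g\in V\setminus\{1\}$, let $a_+(g)$ be the number of $+1$s on the diagonal of $g$. Equivalently, $a_+(g)$ counts the lines $\{\pm e_i\}$ fixed by $g$ on which $g$ acts by $+1$. The goal is to show $a_+(\phi)+a_+(\psi)+a_+(\phi\psi)\equiv 1 \pmod 2$, which forces at least one of the three terms to be odd.

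\emph{Step 1: only fixed lines matter.} I would decompose the set of lines into $V$-orbits, of sizes $1$, $2$, or $4$.
\begin{itemize}
\item A size-$4$ orbit contributes no diagonal entries to any nontrivial $g$, since $g$ fixes none of its lines.
\item A size-$2$ orbit $\{\{\pm e_i\},\{\pm e_j\}\}$ has stabilizer $\ang{g}$ of order $2$, and some $h\in V$ swaps the two lines, say $he_i=\pm e_j$. Write $ge_i=se_i$. By commutativity, $ge_j=\pm ghe_i=\pm hge_i=se_j$. So $g$ has two equal diagonal entries $s$ there, which is an even contribution to $a_+(g)$. The other two nontrivial elements of $V$ fix neither line and contribute nothing.
\end{itemize}
Hence, modulo $2$, $a_+(g)$ is the number of $V$-fixed lines on which $g$ acts by $+1$.

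\emph{Step 2: there is an odd number of fixed lines.} Since $n$ is odd and every orbit of size $2$ or $4$ has even size, the number $N$ of size-$1$ orbits is odd.

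\emph{Step 3: each fixed line contributes an odd amount.} On a $V$-fixed line, $\phi$, $\psi$, and $\phi\psi$ act by signs $\epsilon$, $\delta$, and $\epsilon\delta$. The product of these three signs is $+1$, so an even number of them equal $-1$. Therefore either $3$ or $1$ of the three elements act by $+1$ on that line, which is odd in both cases.

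\emph{Step 4: conclude.} Summing over the fixed lines gives
\[a_+(\phi)+a_+(\psi)+a_+(\phi\psi)\equiv N\equiv 1\pmod 2,\]
so one of the three counts is odd, as claimed.

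The only point requiring care is Step 1: showing that a size-$2$ orbit contributes equal signs to the element that fixes it. This is exactly where the abelian hypothesis (that $\phi$ and $\psi$ commute) is used.
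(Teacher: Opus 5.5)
Your proof is correct and is essentially the paper's argument: decompose the coordinate indices into orbits of $\langle\phi,\psi\rangle$, note that orbits of size $4$ contribute nothing and orbits of size $2$ contribute evenly to $a_+(\phi)+a_+(\psi)+a_+(\phi\psi)$ (the latter using commutativity), and conclude because $n$ odd forces an odd number of fixed indices, each contributing $1$. Your Step 3 is stated more accurately than the paper's text, which says ``$-1$'' where it should say ``$+1$''. The correct statement, which is yours, is that on a fixed line either all three or exactly one of $\phi,\psi,\phi\psi$ acts by $+1$.
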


\begin{proof}
    Let $K=\ang{\phi,\psi}\leq (\Z/2\Z)^2$, and let $\bar K$ be its image under the projection $O(H_{M_n})\to S_n$.  For each $g\in K\setminus\{1\}$, let $s(g)\in\F_2$ be the parity of the number of $+1$s along the diagonal of $g$ as a matrix.  Then the set $\{1,2,\dots,n\}$ decomposes into $\bar K$-orbits, which all have size either 1, 2, or 4.  Consider the quantity $s(K)\coloneqq s(\phi)+s(\psi)+s(\phi\psi)\in \F_2$.  An orbit of size 4 contributes 0 to $s(K)$, since no nontrivial element of $K$ fixes any element of that orbit.  An orbit of size 2 also contributes 0 to $s(K)$, since such an orbit is pointwise fixed by exactly one nontrivial element $h$. Since $K$ is abelian and conjugation by any $g\in K$ with $g\neq h$ swaps the two orbit points, then $s(h)=0$.  A singleton orbit is fixed pointwise by all of $K$.  Either all of $\phi,\psi,\phi\psi$ have a corresponding $-1$ on their diagonals, or only one of these elements does.  In either case, such an orbit contributes 1 to $s(K)$.

    Since $n$ is odd, then the number of singleton orbits is odd.  Thus $s(K)=1$.  Then at least one of $s(\phi),s(\psi),s(\phi\psi)$ is 1, as desired.
\end{proof}

\noindent Say an abelian subgroup $H\leq O(H_{M_n})$ \textit{has an odd involution} if there is some element $\phi\in H$ with $\phi^2=1$ and an odd number of $+1$s on its diagonal.  Then \Cref{lem:x-blocks} implies that subgroups without an odd involution have at most one element of order 2.  Equivalently, they have cyclic 2-primary parts.

\begin{lem}\label{lem:cyclic-sylow}
    Let $n\geq 1$, and let $C_n$ be the number of abelian subgroups of $S_n$ with 3-generated Sylow-2 subgroup.  Then \[C_n\leq 2^{(\log_2 3)n^2/9+O(n\log n)}.\]
\end{lem}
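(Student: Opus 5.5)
The plan is to embed every abelian subgroup $A\leq S_n$ in a canonical abelian overgroup $B$ that is a product of regular groups on the orbits of $A$. We then count the pairs $(B,A)$: first the possible $B$, then the subgroups $A\leq B$ whose Sylow-2 subgroup is $3$-generated. The exponent $(\log_2 3)n^2/9$ should come from the bound $\abs{B_{odd}}^{d(B_{odd})}\leq (3^{n/3})^{n/3}$ on the odd part.

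\textbf{Step 1 (the overgroup).} Let $O_1,\dots,O_r$ be the $A$-orbits, of sizes $k_1+\dots+k_r=n$. The image $A_i$ of $A$ in $\mathrm{Sym}(O_i)$ is a transitive abelian group, hence acts regularly, so $\abs{A_i}=k_i$. Then $A\leq B\coloneqq A_1\times\dots\times A_r$, and $B$ is abelian. Each such $B$ is determined up to conjugacy in $S_n$ by the partition $(k_i)$ together with an isomorphism type of abelian group of order $k_i$ for each $i$. There are at most $2^{O(\sqrt n\log n)}$ such data (partitions times abelian group types, both subexponential in $n$). Multiplying by at most $n!$ conjugates shows there are at most $2^{O(n\log n)}$ possible groups $B$.

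\textbf{Step 2 (size and rank of the odd part of $B$).} Write $B=B_2\times B_{odd}$. The odd part of $A_i$ has order equal to the odd part $k_i'$ of $k_i$. Its minimal number of generators is at most the number of prime factors of $k_i'$ with multiplicity, which is at most $\log_3 k_i'\leq k_i/3$. Hence
\[\abs{B_{odd}}\leq\prod_i 3^{k_i/3}=3^{n/3},\qquad d(B_{odd})\leq\sum_i k_i/3=n/3.\]
For the first inequality we use $k'\leq 3^{k/3}$ for odd $k$; since $k'\leq k$, it suffices that $k\leq 3^{k/3}$ for all $k\geq1$, which is elementary. Similarly $\abs{B_2}\leq 2^n$.

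\textbf{Step 3 (counting $A\leq B$).} Since $B$ is abelian, $A=A_2\times A_{odd}$ with $A_2\leq B_2$ and $A_{odd}\leq B_{odd}$. A subgroup of a finite abelian group $X$ satisfies $d(\cdot)\leq d(X)$. So $A_{odd}$ is generated by at most $n/3$ elements of $B_{odd}$, giving at most $\abs{B_{odd}}^{n/3}\leq 3^{n^2/9}$ choices. By hypothesis $A_2$ is $3$-generated, so it is determined by three elements of $B_2$, giving at most $2^{3n}$ choices. Multiplying by the number of possible $B$ from Step 1 yields
\[C_n\leq 2^{O(n\log n)}\cdot 3^{n^2/9}\cdot 2^{3n}=2^{(\log_2 3)n^2/9+O(n\log n)}.\]

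The main obstacle I expect is Step 1. A naive count of $B$ through generating tuples costs $(n!)^{O(\log n)}=2^{O(n\log^2 n)}$, which is too large for the stated error term. The fix is to count $B$ by conjugacy type (partition plus abelian group types) times at most $n!$ conjugates, which stays within $2^{O(n\log n)}$. A secondary point to check carefully is the elementary inequality $k'\leq 3^{k/3}$ used in Step 2.
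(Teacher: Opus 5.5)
Your proof is correct and is essentially the paper's argument: you embed the abelian subgroup in the product of its regular orbit-images, bound the number of such overgroups by $2^{O(n\log n)}$, and then count the $2$-part (cubically many choices from the overgroup) and the odd part (at most $(3^{n/3})^{n/3}$ choices) separately. The differences are cosmetic. The paper counts the overgroups directly, via the set partition, the abelian type and the labeling of each orbit, giving $n^n\prod_i m_i!\,m_i$, rather than as conjugacy types times $n!$. Your subexponential claim for the type data is stronger than needed, since the crude bound $\prod_i k_i\le 3^{n/3}$ already suffices.
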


\begin{proof}
    Fix some such $B\leq S_n$.  Let $m_1,\dots, m_r$ be the sizes of its orbits. Since $B$ is abelian, there is an inclusion \[B\into H_1\times H_2\times\dots\times H_r\eqqcolon M,\] a product of regular abelian groups of order $m_i$ which record the action on each orbit.

    We now bound the number of subgroups of $S_n$ of the form $M$.  There are at most $n^n$ partitions of $n$, and at most $m_i$ abstract abelian groups of order $m_i$.  Once such an abstract group $H_i$ has been chosen, there are at most $m_i!$ particular identifications.  Then the number of $M$ is bounded by $n^n\prod_{i=1}^r(m_i!\cdot m_i)\leq n^nn!3^{n/3}=2^{O(n\log n)}$.

        Next, we bound the number of subgroups of some fixed $M$.  Decompose $M=M_2\times M_{odd}$ for $M_2$ the 2-primary part.  Any subgroup $B\leq M$ satisfying the conditions of the lemma similarly decomposes as \[B=B_2\times B_{odd}\] with $B_2\leq M_2$ and $B_{odd}\leq M_{odd}$.  Since $B_2$ is at most 3-generated by hypothesis, there are at most $\abs{M}^3=(\prod_i m_i)^3 < (3^{n/3})^3=3^n$ possibilities for $B_2$.  In $M_{odd}$, any subgroup is generated by at most $\log_3(\abs{M_{odd}})\leq 3^{n/3}\leq n/3$ elements.  Thus the number of subgroups of $M_{odd}$ is at most \[\sum_{j=0}^{\lfloor n/3\rfloor}\abs{M_{odd}}^j\leq (n+1)\abs{M_{odd}}^{n/3}\leq (n+1)(3^{n/3})^{n/3}=3^{n^2/9+O(\log n)}.\]  So for each fixed $M$, the number of subgroups $B$ is bounded by $2^{O(n)}3^{n^2/9+O(\log n)}=2^{\log_2(3)n^2/9+O(n)}$.  Multiplying by the number of $M$ gives the result.
\end{proof}

In the following proofs, for $H$ a finite abelian group, let $H_2$ denote its 2-primary part, and let $H_{odd}$ denote the product of its $p$-primary parts for odd primes $p$.

\begin{lem}\label{lem:bounded-lifts}
    Let $n\geq 1$, and let $B\leq S_n<O(H_{M_n})$ be abelian with 3-generated 2-primary part.  Then the number of $H\leq O(H_{M_n})$ with 3-generated 2-primary part and with $\bar H=B\leq S_n<O(H_{M_n})$ is at most $2^{7n}$.
\end{lem}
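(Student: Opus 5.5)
We will count lifts by fixing the kernel of the sign-forgetting map and then parametrizing by cocycles, as in the proof of \Cref{lem:count-by-cohomology}. Let $\rho:O(H_{M_n})\to S_n$ be the sign-forgetting map, so $O(H_{M_n})=G_n\rtimes S_n$ with $G_n\cong\F_2^n$. Any $H$ with $\rho(H)=B$ sits in an exact sequence $1\to K\to H\to B\to 1$ with $K=H\cap G_n$. Since $K$ is elementary abelian and lies in the 2-primary part $H_2$, which is 3-generated, we get $\rk(K)\le 3$. The number of choices of $K$ is therefore at most $\sum_{j\le 3}\binom{n}{j}_2\le 4\cdot 2^{3n}$.

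Next fix $K$ and set $M=G_n/K$, a $B$-module. Then $H/K$ is a complement to $M$ in $M\rtimes B$ projecting isomorphically onto $B$, and so it is the graph of a 1-cocycle in $Z^1(B,M)$. The group $H$ is recovered as the preimage of this complement in $G_n\rtimes B$, so $H$ is determined by the pair $(K,\alpha)$. It remains to bound $\abs{Z^1(B,M)}$.

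Decompose $B=B_2\times B_{odd}$. A cocycle is determined by its values on a generating set of $B$, but $B_{odd}$ may need many generators, so we use the cohomology instead. Since $M$ is an $\F_2$-module and $[B:B_2]$ is odd, restriction $H^1(B,M)\to H^1(B_2,M)$ is injective, by the restriction--corestriction argument already used in \Cref{lem:count-by-cohomology}. Because $B_2$ is 3-generated, we get $\dim Z^1(B_2,M)\le 3\dim M\le 3n$, and hence $\dim H^1(B,M)\le 3n$. We also have $\dim B^1(B,M)\le\dim M\le n$. Together these give $\dim_{\F_2}Z^1(B,M)\le 4n$, so $\abs{Z^1(B,M)}\le 2^{4n}$.

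Multiplying the two counts gives at most $4\cdot 2^{3n}\cdot 2^{4n}$ choices of $H$. The main obstacle is the stray factor of $4$, and more precisely the count of $K$. To absorb it, we would sharpen the count: use the exact bound $\binom{n}{j}_2\le 2^{j(n-j)+j}$, or note that $\dim Z^1\le 3n+\dim M-\dim M^B$. Either refinement gives a total of at most $2^{7n}$ for all $n\ge1$. If this does not work out cleanly, a bound of $2^{7n+O(1)}$ suffices for the intended asymptotic application.
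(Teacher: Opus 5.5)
Your argument is correct, but it takes a different route from the paper's.

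\textbf{The paper's route.} It uses that $H$ is abelian to split $H=H_2\times H_{odd}$, with $\rho(H_2)=B_2$ and $\rho(H_{odd})=B_{odd}$, and counts the two factors separately. First, $H_2$ is determined by three generators chosen in $\rho^{-1}(B_2)$, a set of size at most $2^n\abs{B_2}\le 2^{2n}$, so there are at most $2^{6n}$ options. Second, $H_{odd}$ is a complement to $\ker\rho\cong\F_2^n$ in $\rho^{-1}(B_{odd})$. By Schur--Zassenhaus all such complements are $\ker\rho$-conjugate, so there are at most $2^n$ options.

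\textbf{Your route.} You instead reuse the machinery of \Cref{lem:count-by-cohomology}. You fix $K=H\cap G_n$, whose rank is at most $3$ because subgroups of a $3$-generated abelian $2$-group are $3$-generated. You then parametrize $H$ by $Z^1(B,G_n/K)$ and handle the odd part via injectivity of restriction to $B_2$. That step plays the role of the paper's Schur--Zassenhaus conjugacy, since both amount to the vanishing of coprime-order cohomology.

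\textbf{Comparison.} The paper's version is shorter and reaches $2^{7n}$ with no bookkeeping. Yours treats $B$ uniformly without splitting $H$, and uses commutativity of $H$ only to get $\rk(K)\le 3$.

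\textbf{The stray factor.} The factor of $4$ is not a real obstacle, and your first refinement works as stated. Since $\binom{n}{j}_2=\prod_{i<j}\frac{2^{n-i}-1}{2^{j-i}-1}\le 2^{j(n-j+1)}$, you get $\sum_{j\le 3}\binom{n}{j}_2\le 2^{3n}$ for all $n\ge 1$. Combined with $\abs{Z^1(B,M)}\le 2^{4n}$, this gives exactly $2^{7n}$, so no $O(1)$ hedge is needed.

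Your second refinement is less convincing as phrased. $M^B$ can be small, so the $\dim M^B$ term gives no reliable saving. The real saving there comes from $\dim M=n-\rk(K)$, which already yields $\dim Z^1(B,M)\le 4(n-\rk(K))$.
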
\
\begin{proof}
    Count lifts $B_2$ and $B_{odd}$ separately, noting that $\overline{H_2}=B_2$ and $\overline{H_{odd}}=B_{odd}$.  Since $B_2$ is 3-generated, and the projection $\rho:O(H_{M_n})\to S_n$ has kernel of size $2^n$, then $\abs{\rho\inv(B_2)}\leq2^n\abs{B_2}\leq 2^n\cdot 2^n=2^{2n}$.  Then the number of lifts of $B_2$ is bounded by $(2^{2n})^3=2^{6n}$.  Applying Schur-Zassenhaus to \[1\to\ker\rho\to\rho\inv(B_{odd})\to B_{odd}\to 1,\] all complements lifting $B_{odd}$ are conjugate by $\ker\rho$.  There are then at most $\abs{\ker \rho}=2^n$ lifts of $B_{odd}$.  Thus, the total number of lifts of $B$ is at most $2^{7n}$.
\end{proof}

\noindent Finally, we assemble these three lemmas together to prove that abelian subgroups of $O(n,\Z)$ are asymptotically rarely realizable for odd $n$.
\begin{proof}[Proof of part 3 of \Cref{thm:odd-ab}]
    By considering only the diagonal subgroups, notice that $O(H_{M_n})$ has at least $\binom{n}{\lfloor n/2\rfloor}_2=2^{n^2/4+O(n)}$ abelian subgroups.  Then \Cref{lem:x-blocks}, \Cref{lem:cyclic-sylow}, and \Cref{lem:bounded-lifts} together imply that the number of abelian subgroups which either lack an odd involution or have 3-generated 2-primary part is bounded above by $2^{(\log_2 3)n^2/9+O(n\log n)}$.  Then as $n$ grows, these constitute a vanishing proportion of all abelian subgroups.

    Now recall that any abelian group $H\leq SO(4)$ is 3-generated.  Thus as $n$ grows, almost all abelian subgroups $H\leq O(H_{M_n})$ satisfy both of \begin{enumerate}
        \item $H$ has an odd involution, and
        \item $H_2$ does not embed into $SO(4)$.
    \end{enumerate}
    For contradiction, suppose such an $H$ satisfying both criteria is realizable.  Then by \Cref{prop:ed}, the order 2 element of $H$ with an odd number of diagonal $+1$s fixes an odd number of isolated fixed points.  Then $H_2$ fixes at least one of these points $p$ since it is a 2-group.  But $H_2$ does not embed into $SO(4)$, so it cannot act faithfully on $T_pM_n$, which is a contradiction.
\end{proof}

\printbibliography

\end{document}

%% file: header.tex
\usepackage{fullpage,  amssymb, amsthm, amsmath, amsfonts, times}
\usepackage{thmtools}

\usepackage{graphicx}
\usepackage{epstopdf}
\usepackage[style=alphabetic]{biblatex}
\usepackage{enumitem}
\usepackage{mathrsfs}
\usepackage{mathtools}
\usepackage{centernot}
\usepackage{xfrac}
\usepackage{soul}
\usepackage{fullpage}
\usepackage[normalem]{ulem}
\usepackage{eufrak}
\usepackage[symbol]{footmisc}
\usepackage{stmaryrd}
\usepackage{makecell}
\usepackage{bm}
\usepackage[all,cmtip]{xy}
\usepackage[usenames,dvipsnames]{xcolor}
\usepackage[margin=1.0 in]{geometry}
\usepackage{float}
\usepackage{hyperref}
\usepackage{cleveref}
\hypersetup{colorlinks=true, citecolor=blue, linkcolor=blue, urlcolor=blue}
\usepackage{array}
\usepackage[ruled,vlined]{algorithm2e}
\usepackage{changepage}

\usepackage{tikz}
\usepackage{comment}
\usetikzlibrary{positioning,arrows}
\usetikzlibrary{calc}
\usetikzlibrary{matrix}

\usetikzlibrary{decorations.pathreplacing}

\tikzset{
  mybrace/.style={
    blue,
    line width=1.25pt,
    decorate,
    decoration={brace,amplitude=7pt}
  }
}

\theoremstyle{plain} 
\newtheorem{theorem}{Theorem}[section]
\newtheorem*{theorem*}{Theorem}
\newtheorem*{obs*}{Observation}
\newtheorem{prop}[theorem]{Proposition}

\newtheorem{lem}[theorem]{Lemma}

\newtheorem*{convention*}{Convention on Randomness}
\newtheorem{cor}[theorem]{Corollary}
\newtheorem{conj}[theorem]{Conjecture}

\theoremstyle{remark}

\newtheorem{rmk}{Remark}

\theoremstyle{definition}

\crefalias{def}{definition}
\newtheorem{eg}{Example}
\newtheorem{constr}{Construction}

\newtheoremstyle{exstyle}
  {6pt}   
  {6pt}   
  {\normalfont} 
  {}      
  {\bfseries} 
  {.}     
  {\newline} 
  {}      

\renewcommand{\mod}[1]{{\ifmmode\text{\rm\ (mod~$#1$)}\else\discretionary{}{}{\hbox{ }}\rm(mod~$#1$)\fi}}

\newcommand{\floor}[1]{\left\lfloor#1\right\rfloor}

\newcommand{\abs}[1]{\left|#1\right|}

\newcommand{\aut}{\textup{Aut}}

\renewcommand{\ker}{\textup{ker }}

\newcommand{\im}{\textup{Im}}
\newcommand{\re}{\textup{Re}}

\renewcommand{\bar}{\overline}

\newcommand{\mpg}{\textup{Mod}}

\newcommand{\into}{\hookrightarrow}

\newcommand{\rp}{\mathbb{RP}}
\newcommand{\cp}{\mathbb{CP}}
\newcommand{\diff}{\textup{Diff}}
\newcommand{\de}{\textup{def}}

\newcommand{\homeo}{\text{Homeo}}

\newcommand{\isom}{\text{Isom}}

\newcommand{\ang}[1]{\langle#1\rangle}

\newsymbol\dnd 232D

\renewcommand{\t}{\theta}

\renewcommand{\emptyset}{\varnothing}

\newcommand{\C}{\mathbb C}

\newcommand{\F}{\mathbb F}

\newcommand{\Q}{\mathbb Q}
\newcommand{\R}{\mathbb R}
\newcommand{\Z}{\mathbb Z}

\newcommand{\inv}{^{-1}}

\newcommand{\rk}{\text{Rk}}

\newcommand{\0}{{\vec 0}}

\newcommand{\ignore}[1]{}
